\documentclass{amsart}
\usepackage[toc,page]{appendix}
\usepackage[headheight=12.1pt]{geometry}
\usepackage[all,cmtip]{xy}
\usepackage{amsmath}
\usepackage{amsfonts}
\usepackage{amssymb}
\usepackage{mathrsfs}   
\usepackage{amsthm}
\usepackage{xcolor}
\usepackage{cite}
\usepackage{stmaryrd}
\usepackage{graphicx}
\usepackage{pgf}
\usepackage{eepic}
\usepackage{pstricks}
\usepackage{epsfig}
\usepackage{fancyhdr}
\usepackage{tikz,caption,float}
\usepackage[backref=page]{hyperref}

\usepackage{amsbsy}

\numberwithin{equation}{subsection}

\let\realequation\equation
\def\equation{\setcounter{equation}{\arabic{subsubsection}}%
   \refstepcounter{subsubsection}%
   \realequation}

\usepackage{xr,mathtools}

\hypersetup{
     colorlinks   = true,
     citecolor    = red,
     linkcolor = blue,
     urlcolor = purple
}
    
\setlength{\footskip}{30pt}

\setcounter{tocdepth}{1}

\geometry{
 a4paper,
 left=30mm,
 right=30mm,
 top=40mm,
 bottom=40mm,
 }
 
  \newcommand\imCMsym[4][\mathord]{%
  \DeclareFontFamily{U} {#2}{}
  \DeclareFontShape{U}{#2}{m}{n}{
    <-6> #25
    <6-7> #26
    <7-8> #27
    <8-9> #28
    <9-10> #29
    <10-12> #210
    <12-> #212}{}
  \DeclareSymbolFont{CM#2} {U} {#2}{m}{n}
  \DeclareMathSymbol{#4}{#1}{CM#2}{#3}
}
\newcommand\alsoimCMsym[4][\mathord]{\DeclareMathSymbol{#4}{#1}{CM#2}{#3}}

\imCMsym{cmmi}{124}{\CMjmath}
\imCMsym[\mathop]{cmsy}{113}{\CMamalg}
\imCMsym[\mathop]{cmex}{96}{\CMcoprod}
\alsoimCMsym[\mathop]{cmex}{97}{\CMbigcoprod}

 \pagestyle{fancy}
 \fancyhf{}

\swapnumbers
\theoremstyle{plain}
\newtheorem*{theoremu}{Theorem}
\newtheorem{theorem}[subsubsection]{Theorem}

\newtheorem{proposition}[subsubsection]{Proposition}

\newtheorem{corollary}[subsubsection]{Corollary}

\newtheorem{lemma}[subsubsection]{Lemma}

\theoremstyle{definition}

\newtheorem{definition}[subsubsection]{Definition}

\theoremstyle{remark}
\newtheorem{remark}[subsubsection]{Remark}

\newtheorem{example}[subsubsection]{Example}

\newcommand{\N}{{\mathbb N}}
\newcommand{\Z}{{\mathbb Z}}
\newcommand{\Q}{{\mathbb Q}}
\newcommand{\R}{{\mathbb R}}

\newcommand{\A}{{\mathbb A}}
\newcommand{\D}{{\mathbb D}}

\newcommand{\cO}{\mathcal{O}}
\newcommand{\cA}{\mathcal{A}}

\newcommand{\fr}[1]{\mathfrak{#1}}

\renewcommand{\lim}[1]{\underset{#1}{\mathrm{lim}}\,}

\newcommand{\isomto}{\overset{\cong}{\longrightarrow}}

\newcommand{\an}{\mathrm{an}}

\newcommand{\pow}[1]{\llbracket #1 \rrbracket}

\newcommand{\weak}[1]{\langle #1\rangle^\dagger}
\newcommand{\tate}[1]{\langle #1 \rangle}

\newcommand{\spec}[1]{\mathrm{Spec}\left(#1\right)}
\newcommand{\spa}[1]{\mathrm{Spa}\left(#1\right)}
\newcommand{\spf}[1]{\mathrm{Spf}\left(#1\right)}

\newcommand{\norm}[1]{\left\vert#1\right\vert}

\newcommand{\Tr}{\mathrm{Tr}}

\newcommand{\Hom}{\underline{\mathrm{Hom}}}

\title{Proper pushforwards on analytic adic spaces}

\SetSymbolFont{stmry}{bold}{U}{stmry}{m}{n}
\usepackage{bm}
\usepackage{scalerel}

\author{Tomoyuki Abe}
       \address{Kavli Institute for the Physics and Mathematics of the Universe (WPI) \\ University of Tokyo
 \\ 5-1-5 Kashiwanoha \\ Kashiwa \\ Chiba \\ 277-8583 \\ Japan}
       \email{tomoyuki.abe@ipmu.jp}

\author{Christopher Lazda}
       \address{Department of Mathematics\\ Harrison Building \\ Streatham Campus
 \\ University of Exeter \\ North Park Road \\ Exeter  \\ EX4 4QF \\ United Kingdom }
       \email{c.d.lazda@exeter.ac.uk}

\fancyhead[RO]{T. Abe, C. Lazda}
\fancyhead[LE]{Proper pushforwards on adic spaces}
\fancyfoot[C]{\thepage}

	\begin{document}

\begin{abstract} We construct proper pushforwards for partially proper morphisms of analytic adic spaces. This generalises the theory due to van\thinspace der\thinspace Put in the case of rigid analytic varieties over a non-Archimedean field. For morphisms which are smooth and partially proper in the sense of Kiehl, we furthermore construct the trace map and duality pairing.
\end{abstract}

\maketitle 

\tableofcontents

\section*{Introduction}

Arguably the most significant early achievement of Huber's approach to analytic geometry, via his theory of adic spaces, was that it enabled the development of a robust theory of \'etale cohomology with compact support for rigid analytic varieties, and in particular the proof in \cite{Hub96} of a Poincar\'e duality theorem for smooth morphisms. Previously, van\thinspace der\thinspace Put in \cite{vdP92} constructed a theory of compactly supported cohomology and proper pushforwards for abelian sheaves on rigid analytic varieties over a non-Archimedean field, and proved a version of Serre duality for smooth and proper morphisms. 

Our first goal in this article is to recast van\thinspace der\thinspace Put's definition in Huber's language of adic spaces, and therefore generalise it to the case of analytic adic spaces which are not necessarily defined over a field. In fact, the definition is very simple: if $f\colon X\rightarrow Y$ is a partially proper morphism of analytic adic spaces, we simply define $\mathbf{R}f_!$ to be the derived functor of sections whose support is quasi-compact (and hence proper) over $Y$. The important thing is to show that these compose correctly, and the proof that they do so follows the strategy of \cite[Chapter 5]{Hub96} very closely. 

The main application we have in mind for the formalism developed here is to the theory of rigid cohomology, which necessitates working not just with adic spaces, but with \emph{germs} of adic spaces, i.e. closed subsets of adic spaces with the `induced' analytic structure. (For us, the motivating example of a germ is the tube $]X[_\fr{P}$ of a locally closed subset $X$ of a formal scheme $\mathfrak{P}$.) This generalisation is similar in spirit to the `pseudo-adic spaces' that Huber works with in \cite{Hub96}, although far more modest in scope. 

Our second goal is to define the trace map for smooth morphisms which are `partially proper in the sense of Kiehl' (see Definition \ref{defn: pp Kiehl}).

\begin{theoremu}[\ref{prop: tr basic}, \ref{prop: tr1}, \ref{prop: trace main}] There exists a unique way to define, for any smooth morphism $f:X\rightarrow Y$ of relative dimension $d$, partially proper in the sense of Kiehl, with $Y$ an overconvergent\footnote{A germ is overconvergent if it is stable under generalisation inside its ambient adic space.} and finite dimensional analytic germ, an $\mathcal{O}_Y$-linear map
\[  \mathrm{Tr}_{X/Y}: \mathbf{R}f_!\Omega^\bullet_{X/Y}[2d] \rightarrow \mathcal{O}_Y, \]
such that:
\begin{enumerate}
\item $\Tr_{X/Y}$ is local on the base $Y$, and compatible with composition;
\item when $f$ is \'etale, $\Tr_{X/Y}$ is the canonical map
\[ f_!\mathcal{O}_X \rightarrow \mathcal{O}_Y; \]
\item when the base $Y=\spa{R,R^+}$ is affinoid, and $X=\D_Y(0;1^-)$ is the relative open unit disc, with co-ordinate $z$, then, via the identification
\[ {\rm H}^1_c(X/Y,\Omega_{X/Y}^1) \isomto R\weak{z^{-1}}\,{\rm d}\!\log z \]
$\Tr_{X/Y}$ is given by
\[ \sum_{i\leq 0} r_iz^i \,{\rm d}\!\log z \mapsto r_0.\]
\end{enumerate}
\end{theoremu}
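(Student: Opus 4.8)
The plan is to pin down $\Tr_{X/Y}$ by reducing every smooth morphism to the two normalised building blocks — étale morphisms and the relative open disc — and to treat uniqueness and existence in tandem, with uniqueness driving the construction.

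For uniqueness, I would first use property (1) to assume $Y=\spa{R,R^+}$ is affinoid, and then exploit the fact that an $\mathcal{O}_Y$-linear map out of $\mathbf{R}f_!\Omega^\bullet_{X/Y}[2d]$ is detected on an open cover of $X$. Indeed, applying $\mathbf{R}f_!$ to the \v{C}ech-type resolution of $\Omega^\bullet_{X/Y}$ by the $j_!$ of its restrictions to a cover $\{U_i\}$ (using that $\mathbf{R}f_!$ commutes with extension by zero and composition, as in the first theorem) expresses $\mathbf{R}f_!\Omega^\bullet_{X/Y}$ in terms of the $\mathbf{R}(f|_{U_i})_!\Omega^\bullet_{U_i/Y}$, so two candidate traces agree as soon as they agree on each $U_i$. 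Choosing the cover so that each $f|_{U_i}$ factors as an étale morphism $g_i:U_i\to \D^d_Y$ to the relative open polydisc followed by the projection $\pi:\D^d_Y\to Y$, compatibility with composition (1) together with the étale normalisation (2) reduces the trace on $U_i$ to the trace of $\pi$. Factoring $\pi$ further as a tower of relative open disc projections $\D^d_Y\to\D^{d-1}_Y\to\cdots\to Y$ and applying (1) once more, the trace of $\pi$ is forced to be the iterate of the one-dimensional residue map, which property (3) — transported to arbitrary affinoid bases via locality — determines uniquely. Hence any two traces satisfying (1)--(3) coincide.

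For existence I would run this analysis in reverse. The one-dimensional residue map is well defined by the explicit computation of $H^1_c(\D_Y(0;1^-)/Y,\Omega^1)$ underlying (3); iterating it along the tower of disc projections and invoking locality on the base builds a trace for $\pi:\D^d_Y\to Y$, and composing with the étale trace of (2) produces a trace on each chart $f|_{U_i}$. It then remains to glue these local traces into a global $\Tr_{X/Y}$ and to check properties (1)--(3) and compatibility with composition in general; granting well-definedness, both are formal, since on the charts they reduce to the disc tower, where they hold by construction.

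The hard part will be well-definedness: two factorisations of $f|_{U_i\cap U_j}$ through polydiscs differ by an étale change of coordinates, and I must show the iterated residue map is invariant under such a change. This is the adic analogue of the classical statement that the residue symbol is independent of the chosen uniformiser, and I expect it to require a direct computation with the explicit topological and algebraic description of $\D_\kappa(0;1^+)$ and its compactly supported de\thinspace Rham cohomology (\ref{sec: D1}), together with the compatibility of the residue map with base change along the tower. Once this invariance is in hand, the local traces patch on overlaps, gluing is formal, and the general compatibility with composition follows by the same chart decomposition used for uniqueness.
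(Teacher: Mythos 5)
Your uniqueness argument is sound and matches the way uniqueness actually follows in the paper: locality on $Y$ plus the Mayer--Vietoris spectral sequence of Corollary \ref{cor: MV ss} reduce everything to charts on which $f$ factors through a polydisc, where properties (1), (2) and (3) leave no freedom. The problem is with existence, and precisely at the step you flag as ``the hard part''. Your claim that two factorisations of $f|_{U_i\cap U_j}$ through polydiscs ``differ by an \'etale change of coordinates'' is not true: given two \'etale maps $g_1,g_2:U\rightarrow \D^d_Y$ over $Y$ with $\pi\circ g_1=\pi\circ g_2=f|_U$, there is in general no automorphism of $\D^d_Y$ (nor any morphism at all) intertwining them, so well-definedness of your chart-by-chart trace cannot be reduced to invariance of an iterated residue under a coordinate change, and the analogy with independence of the residue symbol from the uniformiser does not apply. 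The standard way to compare two such charts is to pass to the graph embedding $U\hookrightarrow \D^d_Y\times_Y\D^d_Y$, which is a \emph{closed} immersion into a polydisc --- and at that point you need a trace isomorphism for regular closed immersions, compatible with composition. That is exactly the machinery your proposal omits, and it is the actual technical core of the construction. (There is also a smaller gap: your existence argument takes as given a ``canonical map'' $g_!\mathcal{O}_U\rightarrow\mathcal{O}_V$ for an arbitrary \'etale $g$ in the analytic topology, which for non-finite \'etale maps itself requires an argument; in the paper property (2) is a consequence of the construction, not an ingredient of it.)

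For comparison, the paper's route (following van\thinspace der\thinspace Put) avoids \'etale charts entirely. It first computes $H^d_c$ of relative analytic affine space explicitly and defines the residue trace there (\ref{sec: trace con}); it defines the trace on the closed unit polydisc by extension by zero into affine space; then, for smooth $X$ admitting a closed embedding $u:X\hookrightarrow \D^N_Y(0;1)$, it uses coherent duality for regular closed immersions (Lemmas \ref{lemma: perf} and \ref{lemma: closed dual}, reduced via Theorems A and B to scheme-theoretic duality) to produce an isomorphism $u_*\omega_{X/Y}\isomto \mathbf{R}\Hom_{\mathcal{O}}(u_*\mathcal{O}_X,\omega_{\D^N_Y(0;1)/Y})[N-d]$, composes with restriction along $\mathcal{O}\rightarrow u_*\mathcal{O}_X$ and the polydisc trace, and proves independence of the embedding by comparing two embeddings through their product --- which only needs invariance under the elementary coordinate changes of Lemma \ref{lemma: basic change 2}. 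General smooth $f$ is then handled by covering $X$ with such closed-embeddable charts and gluing via Corollary \ref{cor: MV ss}. If you want to salvage your route, you would either have to prove independence of the \'etale chart directly (which in every known treatment passes through the closed-embedding comparison anyway) or adopt the closed-embedding construction from the outset.
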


Broadly speaking, the construction of $\mathrm{Tr}_{X/Y}$ follows the same outline as in \cite{vdP92}. First we work with relative open unit polydiscs, then closed subspaces of relative open unit polydiscs, and then finally show that the map $\mathrm{Tr}_{X/Y}$ thus constructed doesn't depend on the choice of embedding, and therefore globalises.

The question of what form of Serre--Grothendieck duality holds, and in what generality, we do not address here at all. Our main motivation for developing a formalism of proper pushforwards, and for constructing trace morphisms, was to understand particular constructions in rigid cohomology and the theory of arithmetic $\mathscr{D}$-modules \cite{AL2}. For us, it was enough simply to have the formalism (together with an explicit computation for relative open unit polydiscs); a detailed study of duality would therefore have distracted us rather too much from our main goal. Another natural question to ask is whether or not the formalism of $\mathbf{R}f_!$ extends in any reasonable way beyond the partially proper case. We give an example in \S\ref{sec: counter examples} to show that this cannot be done, essentially for the same reason as in the case of abelian sheaves on the Zariksi site of schemes, namely, the failure of the proper base change theorem. We thank B. Le Stum for the main idea behind this example.

Let us now give a brief summary of the contents of this article. In \S\ref{sec: gt} we gather together various (existing) results in general topology that will be useful in the rest of the article, in particular concerning sheaf cohomology on spectral spaces. In \S\ref{sec: qad} we introduce the notion of a germ of an adic space along a closed subset, and define the category in which these live. In \S\ref{sec: defn} we define proper pushforwards $f_!$ and $\mathbf{R}f_!$ for partially proper morphisms of germs of adic spaces, and show that these derived proper pushforwards compose correctly. In \S\ref{sec: kpp} we prove a result on the cohomological dimension of coherent sheaves, which is then used in \S\ref{sec: trace} to constructed the trace map for smooth morphisms which are partially proper in the sense of Kiehl. Finally, in \S\ref{sec: counter examples} we give an example to show that there is no good formalism of $\mathbf{R}f_!$ beyond the partially proper case.

\subsection*{Acknowledgements}

We would like to thank Yoichi Mieda for reading the manuscript carefully, and giving us several helpful comments, and J\'er\^ome Poineau for	answering some of our questions. We would like to thank Bernard Le Stum for pointing out the failure of the proper base change theorem in general. The first author was supported by JSPS KAKENHI Grant Numbers 16H05993, 18H03667, 20H01790. 

\subsection*{Notation and conventions}

We will only deal with abelian sheaves. Thus if $X$ is a topological space, a sheaf on $X$ will always mean an abelian sheaf. The category of abelian sheaves on $X$ will be denoted by $\mathbf{Sh}(X)$, and its derived category by ${\bf D}(X)$. If $\mathcal{A}$ is a sheaf of rings on $X$, the derived category of $\cA$-modules will be denoted ${\bf D}(\cA)$.

A Huber ring will be a topological ring admitting an open adic subring $R_0$ with finitely generated ideal of definition. For such a ring $R$, we will denote by $R^\circ \subset R$ the subset of power-bounded elements, and by $R^{\circ\circ}\subset R^\circ$ the subset of topologically nilpotent elements. A Huber pair is a pair $(R,R^+)$ consisting of a Huber ring $R$ and an open, integrally closed subring $R^+\subset R^\circ$. A Huber ring $R$ is said to be a Tate ring if there exists some $\varpi\in R^\times\cap R^{\circ\circ}$, such an element will be called a quasi-uniformiser. A Huber pair $(R,R^+)$ is said to be a Tate pair if $R$ is a Tate ring. An adic space isomorphic to $\spa{R,R^+}$ with $(R,R^+)$ a Tate pair will be called a Tate affinoid.

If $X$ is an adic space, and $x\in X$, we will denote by $\mathcal{O}_{X,x}$ and $\mathcal{O}_{X,x}^+$ the stalks of the structure sheaf and integral structure sheaf of $X$ at $x$ respectively. The residue field of $\mathcal{O}_{X,x}$ will be denoted $k(x)$, and the image of $\mathcal{O}_{X,x}^+$ in $k(x)$ by $k(x)^+$. The residue field of $k(x)^+$ will be denoted $\widetilde{k(x)}$.

\section{General topology} \label{sec: gt}

In this section we will gather together various existing definitions and results that we will need from general topology, mostly using either \cite{FK18} or \cite{Hub96} as references.

\subsection{Basic definitions} For the reader's convenience we recall several definitions that will be used in this article.

\begin{definition} A topological space $X$ is said to be:
\begin{enumerate}
\item quasi-compact if every open cover has a finite sub-cover;
\item compact if it is quasi-compact and Hausdorff;
\item locally compact if every point has a compact neighbourhood;
\item quasi-separated if the intersection of any two quasi-compact opens is quasi-compact;
\item coherent if it is quasi-compact, quasi-separated, and admits a basis of quasi-compact open subsets;
\item locally coherent if it admits a cover by coherent open subspaces
\item sober if every irreducible closed subset has a unique generic point; 
\item spectral if it is coherent and sober;
\item locally spectral if it is locally coherent, and sober.
\item valuative if it is locally spectral, and the set of generalisations of any point $x\in X$ is totally ordered;
\item taut if it is locally spectral, quasi-separated, and the closure of any quasi-compact open $U\subset X$ is quasi-compact.
\end{enumerate}
\end{definition} 

If $X$ is a locally spectral space, and $x,y\in X$ we will write $y\succ x$ if $x\in \overline{\{y\}}$, i.e. if $x$ is a specialisation of $y$. We will also write $G(x)$ for the set of generalisations of $x$.

\begin{definition} A morphism $f:X\rightarrow Y$ of locally coherent topological spaces is said to be
\begin{enumerate}
\item quasi-compact if the preimage of every quasi-compact open subset $V\subset Y$ is quasi-compact;
\item quasi-separated if the preimage of every quasi-separated open subset $V\subset Y$ is quasi-separated;
\item coherent if it is quasi-compact and quasi-separated;
\end{enumerate}
A morphism $f:X\rightarrow Y$ of locally spectral spaces is said to be
\begin{enumerate}
\item[(4)] taut if the pre-image of every taut open subspace $V\subset Y$ is taut;
\item[(5)] spectral if for every quasi-compact and quasi-separated open subsets $U\subset X$, $V\subset Y$ with $f(U)\subset V$, the induced map $f:U\rightarrow V$ is quasi-compact.
\end{enumerate} 
\end{definition}

\begin{definition} A morphism $f:X\rightarrow Y$ of topological spaces is said to be topologically proper if for all topological spaces $Z$, the map $X\times Z \rightarrow Y\times Z$ is closed. 
\end{definition}

\begin{remark} We use the terminology topologically proper to distinguish this from the analytic notion of properness that we will use later on. 
\end{remark}

If $f$ is topologically proper, then preimages of quasi-compact sets are quasi-compact \cite[\S10.2, Proposition 6]{Bou98}, and if $X$ is Hasudorff and $Y$ locally compact, then the converse holds \cite[\S10.3, Proposition 7]{Bou98}. 

\subsection{Sheaf cohomology on spectral spaces}

The following result will be used constantly.

\begin{proposition} \label{prop: cohom limits} Let $X$ be a topological space, $\{U_i\}_{i\in I}$ a filtered diagram of open subsets of $X$, such that each $U_i$ is spectral, and set $Z=\cap_{i\in I}U_i$. Then, for any sheaf $\mathscr{F}$ on $X$, and any $q\geq0$, the natural map
\[ \mathrm{colim}_{i\in I} {\rm H}^q(U_i,\mathscr{F}|_{U_i}) \rightarrow {\rm H}^q(Z,\mathscr{F}|_{Z}) \]
is an isomorphism.
\end{proposition}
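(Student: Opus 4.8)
The plan is to prove the isomorphism for all $q$ simultaneously by means of a flasque resolution on the ambient space $X$, thereby reducing the assertion to a purely topological statement about the subspace $Z=\bigcap_i U_i$. Fix a flasque resolution $\mathscr{F}\rightarrow\mathscr{I}^\bullet$ on $X$. Since restriction to an open subset is exact and carries flasque sheaves to flasque sheaves, for each $i$ the complex $\mathscr{I}^\bullet|_{U_i}$ is a flasque resolution of $\mathscr{F}|_{U_i}$ on $U_i$, so that $H^q(U_i,\mathscr{F}|_{U_i})=H^q\big(\Gamma(U_i,\mathscr{I}^\bullet)\big)$. Filtered colimits of abelian groups are exact, hence commute with the formation of cohomology of a complex, and therefore $\mathrm{colim}_{i}H^q(U_i,\mathscr{F}|_{U_i})=H^q\big(\mathrm{colim}_{i}\Gamma(U_i,\mathscr{I}^\bullet)\big)$. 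Writing $\iota:Z\hookrightarrow X$ for the inclusion, the pullback $\iota^{-1}$ is exact, so $\mathscr{F}|_Z\rightarrow\mathscr{I}^\bullet|_Z$ is again a resolution; thus everything comes down to the following two assertions about the sheaves $\mathscr{G}=\mathscr{I}^j$:
\begin{enumerate}
\item[(A)] the natural map $\mathrm{colim}_i\,\Gamma(U_i,\mathscr{G})\rightarrow\Gamma(Z,\iota^{-1}\mathscr{G})$ is an isomorphism when $\mathscr{G}$ is flasque;
\item[(B)] $\iota^{-1}\mathscr{G}$ is acyclic for $\Gamma(Z,-)$ when $\mathscr{G}$ is flasque.
\end{enumerate}
Granting (A) and (B), the comparison map $\mathrm{colim}_i\Gamma(U_i,\mathscr{I}^\bullet)\rightarrow\Gamma(Z,\iota^{-1}\mathscr{I}^\bullet)$ is an isomorphism of complexes computing the left- and right-hand sides respectively, and the proposition follows.

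The inputs (A) and (B) are where the hypotheses on the $U_i$ enter, and both rest on the structure of $Z$ as a cofiltered limit of spectral spaces along quasi-compact transition maps. The decisive topological fact, which I would extract from the theory of such limits in \cite{FK18} (compare \cite[Ch.\ 2]{Hub96}), is that $Z$ is again spectral and that every quasi-compact open of $Z$ has the form $W_i\cap Z$ for some index $i$ and some quasi-compact open $W_i\subseteq U_i$, two such descending to the same open of $Z$ precisely when they already agree over some later stage. This descent of quasi-compact opens from finite stages is exactly what proves (A): a section of $\iota^{-1}\mathscr{G}$ over $Z$ spreads out over a finite cover of $Z$ by quasi-compact opens, one lifts the cover and the local sections to a single $U_i$, and uses flasqueness of $\mathscr{G}$ together with the separatedness furnished by the descent statement to patch; the same bookkeeping gives injectivity. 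For (B) I would in fact show that $\iota^{-1}\mathscr{G}$ is flasque on $Z$: every open of $Z$ is exhausted by quasi-compact opens that descend to the $U_i$, so surjectivity of the restriction maps of $\iota^{-1}\mathscr{G}$ reduces, via (A), to surjectivity of the restriction maps of $\mathscr{G}$ on the $U_i$, which holds because $\mathscr{G}$ is flasque.

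I expect the main obstacle to be precisely this topological descent — establishing that $Z=\lim_i U_i$ is spectral and that its quasi-compact opens, together with the incidence relations among them, all arise at finite level. Once that is in hand, the sheaf-theoretic steps (A) and (B) are essentially formal manipulations with the sheaf axioms and flasqueness. An alternative, more conceptual route would bypass (A) and (B) entirely by invoking that the topos of sheaves on a spectral space is coherent, and that cohomology commutes with cofiltered limits of coherent topoi along coherent morphisms (Deligne's theorem); applied to the limit presentation $\mathrm{Sh}(Z)=\lim_i\mathrm{Sh}(U_i)$ this yields the claim directly, at the cost of importing heavier topos-theoretic machinery.
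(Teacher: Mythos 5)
The paper gives no argument here at all---it simply cites \cite[Chapter 0, Proposition 3.1.19]{FK18}---so your proposal is a reconstruction of the proof behind that citation, and in outline it is the standard one: resolve by flasque sheaves on $X$, commute filtered colimits past cohomology of complexes, and reduce everything to your statements (A) and (B), whose content is the topological fact that $Z$ is spectral, that its quasi-compact opens (together with their incidence relations) all descend to some finite level $U_j$, and that any open of $U_i$ containing $Z$ contains some $U_j$. Your (A), and the descent it rests on, are correct; in fact (A) holds for an arbitrary sheaf, not just a flasque one---it is precisely the $q=0$ case of the proposition, proved by the spreading-out-and-patching you describe.

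The one soft spot is (B). You claim $\iota^{-1}\mathscr{G}$ is flasque on $Z$ because every open of $Z$ is exhausted by quasi-compact opens descending to the $U_i$. That argument only gives surjectivity of $\Gamma(Z,\iota^{-1}\mathscr{G})\rightarrow\Gamma(W,\iota^{-1}\mathscr{G})$ when $W$ is itself quasi-compact: for a general open $W=\bigcup_\alpha W_\alpha$, written as a directed union of quasi-compact opens, a section over $W$ is a \emph{compatible family} over the $W_\alpha$, and surjectivity of each individual map $\Gamma(Z,-)\rightarrow\Gamma(W_\alpha,-)$ does not produce one section over $Z$ extending the whole family. Fortunately flasqueness is more than you need: since $Z$ is itself quasi-compact (it is patch-closed in the patch-compact space $U_i$, each $U_j$ being a quasi-compact, hence constructible, open of $U_i$), it suffices to prove $H^q(W,\iota^{-1}\mathscr{G})=0$ for $q>0$ and every quasi-compact open $W\subset Z$. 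This does follow from (A), via Cartan's criterion: for a finite covering of such a $W$ by quasi-compact opens, (A) applied to all the (quasi-compact) finite intersections identifies the \v{C}ech complex of $\iota^{-1}\mathscr{G}$ with the filtered colimit over $j$ of the \v{C}ech complexes of $\mathscr{G}$ on the corresponding coverings at level $U_j$, each of which is exact in positive degrees because $\mathscr{G}$ is flasque; exactness passes to the filtered colimit, so all positive \v{C}ech cohomology vanishes and acyclicity on quasi-compact opens follows. With (B) weakened to this statement your argument closes up. Your alternative route via Deligne's theorem on cofiltered limits of coherent topoi is also legitimate, and is essentially how the general form of this statement is proved in \cite{SGA4iii}.
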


\begin{proof}
Since the inclusions $U_i\rightarrow U_j$ are automatically quasi-compact by \cite[Chapter 0, Proposition 2.2.3]{FK18}, this is a particular case of \cite[Chapter 0, Proposition 3.1.19]{FK18}. 
\end{proof}

\begin{corollary} \label{cor: pushforward stalks} Let $f:X\rightarrow Y$ be a coherent morphism of locally spectral spaces, $\mathscr{F}$ a sheaf on $X$, and $y\in Y$. Let $X_{(y)}\subset X$ denote the inverse image of $G(y)\subset Y$. Then, for any $q\geq0$, the natural map
\[ (\mathbf{R}^qf_*\mathscr{F})_y \rightarrow {\rm H}^q(X_{(y)},\mathscr{F}|_{X_{(y)}})  \]
is an isomorphism.
\end{corollary}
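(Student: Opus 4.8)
The plan is to realise both sides of the comparison as the two ends of the isomorphism in Proposition \ref{prop: cohom limits}, applied to the filtered system of preimages of spectral open neighbourhoods of $y$. First recall that $\mathbf{R}^qf_*\mathscr{F}$ is the sheafification of the presheaf $V\mapsto H^q(f^{-1}(V),\mathscr{F}|_{f^{-1}(V)})$ on $Y$, so that its stalk at $y$ is the filtered colimit
\[ (\mathbf{R}^qf_*\mathscr{F})_y = \mathrm{colim}_{V\ni y} H^q(f^{-1}(V),\mathscr{F}|_{f^{-1}(V)}), \]
taken over all open neighbourhoods $V$ of $y$, ordered by reverse inclusion. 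Since $Y$ is locally spectral I would fix a spectral open neighbourhood $W_0$ of $y$; the quasi-compact open subsets of $W_0$ containing $y$ then form a neighbourhood basis of $y$, and being quasi-compact, quasi-separated and sober, each such $V$ is itself spectral. These $V$ form a filtered (down-directed) system that is cofinal among all open neighbourhoods of $y$, so the colimit above may be computed over this subsystem alone.

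The geometric content is to check the hypotheses of Proposition \ref{prop: cohom limits} for the diagram $\{f^{-1}(V)\}_V$. Because $f$ is coherent and $V$ is quasi-compact and quasi-separated, the preimage $f^{-1}(V)$ is again quasi-compact and quasi-separated; as an open subspace of the sober space $X$ it is sober, hence spectral. For the transition maps, the inclusion $f^{-1}(V)\hookrightarrow f^{-1}(V')$ (for $V\subset V'$) is quasi-compact, since the preimage of a quasi-compact open $U\subset f^{-1}(V')$ is $U\cap f^{-1}(V)$, an intersection of two quasi-compact opens in the quasi-separated space $f^{-1}(V')$, hence quasi-compact. Finally I would identify the limit: a point $z$ lies in $G(y)$ precisely when every open neighbourhood of $y$ contains $z$, so since the $V$ form a neighbourhood basis we get $\bigcap_V V = G(y)$, and therefore $\bigcap_V f^{-1}(V) = f^{-1}(G(y)) = X_{G(y)}$. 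Proposition \ref{prop: cohom limits} then yields the isomorphism
\[ \mathrm{colim}_V H^q(f^{-1}(V),\mathscr{F}|_{f^{-1}(V)}) \isomto H^q(X_{G(y)},\mathscr{F}|_{X_{G(y)}}), \]
and it remains only to check that the composite agrees with the canonical comparison map of the statement, which follows by unwinding the latter as the colimit of the evident restriction maps.

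The hard part will be nothing deep, but rather the careful bookkeeping of the previous paragraph: coherence of $f$ is exactly what is needed to ensure simultaneously that each $f^{-1}(V)$ is spectral and that the transition inclusions are quasi-compact, which are precisely the two conditions under which Proposition \ref{prop: cohom limits} applies. Once these verifications are in place, together with the observation that spectral neighbourhoods of $y$ are cofinal and that their intersection is $G(y)$, the corollary is essentially formal.
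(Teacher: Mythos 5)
Your proof is correct and follows essentially the same route as the paper's: both realise the stalk as a filtered colimit over a cofinal system of spectral open neighbourhoods of $y$ whose preimages are spectral with quasi-compact transition maps, identify the intersection of these preimages with $X_{G(y)}$, and apply Proposition \ref{prop: cohom limits}. Your bookkeeping (coherence of $f$ giving quasi-compactness and quasi-separatedness of the preimages, and $\bigcap_V V = G(y)$) matches the paper's argument, which merely phrases the reduction as ``we may assume $Y$ is coherent, thus spectral.''
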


\begin{proof}
We may assume that $Y$ is coherent, thus spectral. Hence $X$ is also spectral. The point $y$ admits a cofinal system of open neighbourhoods $\{ U_i\}_{i\in I}$ with each $U_i$ spectral. Therefore each $f^{-1}(U_i)$ is spectral, and we have
\[(\mathbf{R}^qf_*\mathscr{F})_y = \mathrm{colim}_{i\in I} {\rm H}^q(f^{-1}(U_i),\mathscr{F}|_{f^{-1}(U_i)}) = {\rm H}^q(X_{(y)},\mathscr{F}|_{X_{(y)}}) \] 
since $\cap_i f^{-1}(U_i) = f^{-1}(\cap_i U_i) = f^{-1}(G(y))= X_{(y)}$. 
\end{proof}

\subsection{Dimensions of spectral spaces} The dimension theory of locally spectral spaces works as in the case of schemes.

\begin{definition} Let $X$ be a locally spectral space. The dimension of $X$ is defined to be
\[ \dim X = \sup\{\left. n \geq 0 \;\right\vert\; \exists\;\; x_n \succ x_{n-1} \succ \ldots \succ x_0,\;\;x_i\neq x_{i-1}   \} \in \Z_{\geq0} \cup \{\infty,-\infty\}.  \]
The space $X$ is said to be finite dimensional if $\dim X <\infty$. 
\end{definition}

We will need the following generalisation of Grothendieck vanishing.

\begin{theorem} \label{theo: cd ss} Let $X$ be a spectral space of dimension $d$, and $\mathscr{F}$ a sheaf on $X$. Then ${\rm H}^q(X,\mathscr{F})=0$ for all $q>d$.
\end{theorem}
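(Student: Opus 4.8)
The plan is to prove this generalisation of Grothendieck vanishing by reducing, via the structure theory of sheaves on spectral spaces already set up in Lemma \ref{lemma: sheaf colimit}, to the case of a constant sheaf supported on a closed subspace, and then to induct on the dimension $d$.

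First I would reduce to the case of a constant sheaf. Since sheaf cohomology commutes with filtered colimits on a spectral (hence quasi-compact, quasi-separated) space, and since Lemma \ref{lemma: sheaf colimit} writes any $\mathscr{F}$ as a filtered colimit of sheaves $\mathscr{F}_i$ each of which injects into a finite direct sum $\bigoplus_j \iota_{ij*}\underline{A}_{ij}$ of pushforwards of constant sheaves along closed immersions, it suffices to bound the cohomological dimension for sheaves of this latter type. A finite direct sum is harmless, and for $\iota:Z\hookrightarrow X$ a closed immersion one has $H^q(X,\iota_*\underline{A})=H^q(Z,\underline{A})$ (as $\iota_*$ is exact for a closed immersion and preserves injectives/acyclics), while $\dim Z\leq \dim X=d$. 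Handling the injection $\mathscr{F}_i\hookrightarrow\bigoplus_j\iota_{ij*}\underline{A}_{ij}$ requires a short diagram chase: from the long exact sequence associated to $0\to\mathscr{F}_i\to\bigoplus_j\iota_{ij*}\underline{A}_{ij}\to\mathscr{Q}\to0$, vanishing above degree $d$ propagates downwards once one controls the quotient $\mathscr{Q}$, which is again a sheaf on a spectral space of dimension $\leq d$, so one can set up the whole argument as an induction on $d$ simultaneously for all sheaves.

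With the reduction in hand, the core is the inductive step for $\mathscr{F}=\underline{A}$ constant on a closed (spectral) subspace, which I would reduce to $X$ itself and argue by induction on $d=\dim X$. The base case $d=0$ is a finite discrete (or at least totally disconnected spectral) space, where all higher cohomology vanishes. For the inductive step, the natural tool is the spectral-space formalism of the section: one can use Corollary \ref{cor: pushforward stalks} together with a quasi-compact map $p:X\to T$ to a finite sober space (as produced by \cite[Proposition 10]{Hoc69} in the proof of Lemma \ref{lemma: sheaf colimit}) and the Leray spectral sequence $H^p(T,\mathbf{R}^qp_*\mathscr{F})\Rightarrow H^{p+q}(X,\mathscr{F})$. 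By Corollary \ref{cor: pushforward stalks} the stalk of $\mathbf{R}^qp_*\mathscr{F}$ at $t\in T$ computes cohomology of the fibre-like piece $X_{G(t)}$, whose dimension drops, so by the inductive hypothesis these vanish for $q$ large; and $T$ being finite sober of dimension $\leq d$ contributes boundedly in the $p$-direction. Combining the two ranges through the spectral sequence yields vanishing above degree $d$.

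The main obstacle I anticipate is making the dévissage of the previous paragraph genuinely decrease the dimension at each stage and interlock cleanly with the colimit reduction. Specifically, one must check that the generalisation-set spaces $X_{G(t)}$ appearing as (derived) pushforward stalks really do have strictly smaller dimension than $X$ whenever $p$ is chosen well, and that the finite target $T$ does not secretly reintroduce dimension; this is the kind of bookkeeping where the definition of $\dim$ in terms of specialisation chains (Definition \ref{defn: dimension}) must be used carefully. A cleaner route, which I would pursue if the spectral-sequence argument becomes awkward, is to cite the general result that the cohomological dimension of a spectral space equals its Krull dimension directly (this is a known theorem, e.g.\ in the Stacks project or \cite{FK18}); but absent such a citation, the inductive argument above via Lemma \ref{lemma: sheaf colimit}, Corollary \ref{cor: pushforward stalks}, and the reduction to finite sober spaces is the safest self-contained strategy.
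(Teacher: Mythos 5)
The paper does not actually prove this statement: it is precisely Scheiderer's theorem, and the ``proof'' in the text is a one-line citation to \cite{Sch92}. The fallback you offer in your last paragraph --- simply citing the known result --- is therefore exactly what the authors do, and it is the right call, because the self-contained argument you sketch has gaps that are not mere bookkeeping. The first is the reduction to constant sheaves on closed subsets. Lemma \ref{lemma: sheaf colimit} only provides an \emph{injection} $\mathscr{F}_i\hookrightarrow\bigoplus_j\iota_{ij*}\underline{A}_{ij}$, and the quotient $\mathscr{Q}$ is an arbitrary sheaf on $X$ itself, a space of the \emph{same} dimension $d$, so ``induction on $d$ simultaneously for all sheaves'' never applies to it. Concretely, the long exact sequence gives a surjection from (a quotient of) $H^{d}(X,\mathscr{Q})$ onto $H^{d+1}(X,\mathscr{F}_i)$, and $H^d(X,\mathscr{Q})$ has no reason to vanish; injections push the problem \emph{down} in degree, which is the wrong direction for any descending induction. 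This kind of d\'evissage is fine for proving \emph{effaceability} of a $\delta$-functor --- which is how the paper actually uses Lemma \ref{lemma: sheaf colimit}, in Step 1 of the proof of Theorem \ref{theo: pbc3} --- but effaceability does not yield a vanishing bound.

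The second gap is the claimed dimension drop in the inductive step, which fails as stated. For $t\in T$ the set $G(t)$ of generalisations can be all of $T$ (take $t$ a closed point of an irreducible $T$), in which case $X_{G(t)}=p^{-1}(G(t))=X$ and nothing has dropped; at the other extreme $G(t)=\{t\}$ and the fibre $p^{-1}(t)$ can still have dimension $d$. Moreover the finite sober space $T$ produced by Hochster's construction is a quotient of $X$ determined by finitely many quasi-compact opens, and such a quotient can have Krull dimension \emph{larger} than $\dim X$: already a two-point discrete space surjects continuously and quasi-compactly onto the Sierpi\'{n}ski space. Finally, even granting that $E_2^{p,q}$ vanishes for $p>a$ and for $q>b$, the Leray spectral sequence only bounds the cohomological dimension by $a+b$, not by $\max(a,b)$; obtaining the sharp bound $d$ requires the two contributions to add up to $d$ pointwise, and arranging that is essentially the hard content of Scheiderer's proof (which proceeds quite differently, via a delicate induction on $d$ using that boundaries $\overline{U}\setminus U$ of quasi-compact opens have strictly smaller dimension). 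You should follow the paper and cite \cite{Sch92}; a self-contained proof along the lines you propose would need substantial new input at each of the three points above.
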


\begin{proof}
This is the main result of \cite{Sch92}.
\end{proof}

\begin{definition} Let $f:X\rightarrow Y$ be a spectral morphism between locally spectral spaces. The dimension of $f$ is defined to be
\[ \dim f = \sup\{\left. \dim f^{-1}(y)\right\vert y\in Y  \} \in \Z_{\geq0} \cup \{\infty,-\infty\}.  \]
The map $f$ is said to be finite dimensional if $\dim f <\infty$. 
\end{definition}

\subsection{Separated quotients}

Let $X$ be a valuative space, and let $[X]$ denote its set of maximal points, i.e. points such that $G(x)=\{x\}$. Then taking a point to its maximal generalisation\footnote{That every point of a valuative space does indeed admit a maximal generalisation is explained in \cite[Chapter 0, Remark 2.3.2]{FK18}.} induces a surjective map
\[ \nu : X\rightarrow [X], \]
and we equip $[X]$ with the quotient topology.

\begin{proposition} The space $[X]$ is $T_1,$\footnote{A space is $T_1$ if for any two distinct points, each has an open neighbourhood not containing the other.} and is universal for maps from $X$ into $T_1$ topological spaces. If $X$ is coherent, then $[X]$ is compact.
\end{proposition}

\begin{proof}
This is \cite[Chapter 0, Proposition 2.3.9 and Corollary 2.3.18]{FK18}.
\end{proof}

Note that the space $[X]$ is generally no longer valuative, since it won't admit a basis of quasi-compact opens. 

\begin{definition} An open (resp. closed) subset of a valuative space $X$ is said to be \emph{overconvergent} if it is closed under specialisation (resp. generalisation). 
\end{definition}

Equivalently, it is the preimage of an open (resp. closed) subset of $[X]$ under the separation map.

\begin{lemma} \label{lemma: oc closed} Let $Z\subset X$ be an overconvergent closed subset of a coherent valuative space, and $\mathscr{F}$ a sheaf on $X$. Then, for all $q\geq0$, the natural map
\[ \mathrm{colim}_{\substack{Z\subset U\\\mathrm{open}}} {\rm H}^q(U,\mathscr{F}|_U)\rightarrow {\rm H}^q(Z,\mathscr{F}|_Z) \]
is an isomorphism.
\end{lemma}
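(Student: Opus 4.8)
The plan is to realise $Z$ as a suitably nice filtered intersection of open neighbourhoods and then quote Proposition \ref{prop: cohom limits}. Concretely, I would let $\mathcal{U}$ denote the collection of all \emph{quasi-compact} open subsets $U\subset X$ with $Z\subset U$, ordered by reverse inclusion, and aim to show that $\{U\}_{U\in\mathcal{U}}$ is a filtered diagram of spectral open subspaces, with quasi-compact transition maps, whose intersection is exactly $Z$. Granting this, Proposition \ref{prop: cohom limits} immediately yields
\[ \mathrm{colim}_{U\in\mathcal{U}} H^q(U,\mathscr{F}|_U) \isomto H^q(Z,\mathscr{F}|_Z), \]
and a short cofinality argument identifies the left-hand side with the colimit over \emph{all} open $U\supset Z$.

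Several of the required properties are formal consequences of $X$ being coherent. Since $X$ is quasi-compact and $Z$ is closed, $Z$ is quasi-compact; since the quasi-compact opens form a basis, every open neighbourhood of $Z$ contains a finite union of quasi-compact opens still containing $Z$, so $\mathcal{U}$ is cofinal among all open neighbourhoods of $Z$, and in particular non-empty (it contains $X$). Each $U\in\mathcal{U}$ is a quasi-compact open of the quasi-separated, sober space $X$, hence is itself spectral; the intersection of two members of $\mathcal{U}$ again lies in $\mathcal{U}$ (using quasi-separatedness of $X$), so $\mathcal{U}$ is filtered; and for $U'\subset U$ in $\mathcal{U}$ the inclusion is quasi-compact, because the preimage of a quasi-compact open $W\subset U$ is $W\cap U'$, an intersection of quasi-compact opens in the quasi-separated space $X$.

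The one point that genuinely uses the hypotheses on $Z$ --- and the step I expect to be the crux --- is the claim that $\bigcap_{U\in\mathcal{U}} U = Z$. Here I would argue that for any $z\notin Z$ one has $\overline{\{z\}}\cap Z=\emptyset$: if some $y$ lay in the intersection, then $z$ would be a generalisation of $y\in Z$, forcing $z\in Z$ since $Z$ is overconvergent (closed under generalisation), a contradiction. Thus $X\setminus\overline{\{z\}}$ is an open neighbourhood of $Z$ avoiding $z$, and by the cofinality already established it contains a member of $\mathcal{U}$ avoiding $z$ as well. Hence no point outside $Z$ lies in every $U\in\mathcal{U}$, giving $\bigcap_{U\in\mathcal{U}} U=Z$ and completing the proof. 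Note that valuativeness of $X$ enters only implicitly, through the fact that the notion of an overconvergent subset is framed in those terms; the argument itself rests solely on $Z$ being closed under generalisation together with the coherence of $X$.
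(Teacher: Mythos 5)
Your proof is correct and follows exactly the route the paper takes: the paper's own proof is simply the one-line citation of Proposition \ref{prop: cohom limits}, and your argument supplies precisely the verifications (cofinality of quasi-compact open neighbourhoods, spectrality, quasi-compactness of the transition maps, and the identification $\bigcap_{U}U=Z$ via overconvergence) needed to invoke it. Nothing to correct.
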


\begin{proof}
Since $Z$ is the intersection of its open neighbourhoods, this follows from Proposition \ref{prop: cohom limits}.
\end{proof}

\section{Germs of adic spaces}\label{sec: qad}

In this section we introduce the category of germs of adic spaces. This will be the category in which we work for the rest of this article.

\subsection{Standing hypotheses} We use Huber's theory of adic spaces, see either \cite{Hub94} or \cite[Chapter 1]{Hub96} for an introduction. We will assume that all adic spaces are analytic in the sense of \cite[\S1.1]{Hub96}. That is, each point $x\in X$ will have an open affinoid neighbourhood $x\in \spa{R,R^+}\subset X$ such that $R$ is Tate. This implies that all morphisms of adic spaces are adic in the sense of \cite[\S1.2]{Hub96}. We will let $\mathbf{Ad}$ denote the category of analytic adic spaces. Given the standing assumption \cite[(1.1.1)]{Hub96}, it is an implicit assumption of ours that all adic spaces are locally Noetherian.

\subsection{Germs of adic spaces}

In \cite[\S1.10]{Hub96} Huber introduces the notion of a pseudo-adic space, which roughly speaking consists of an adic space $\bm{X}$, together with a `reasonably nice' subspace $X\subset \bm{X}$. We will work instead with germs of adic spaces along closed subsets.

\begin{definition} \label{defn: germ} A germ of an adic space is a pair $(X,\bm{X})$ where $\bm{X}$ is an adic space, and $X\subset \bm{X}$ is a closed subset.
\end{definition}

We can construct a category $\mathbf{Germ}$ of germs of adic spaces in the usual way. We first consider the category of pairs $(X,\bm{X})$ as in Definition \ref{defn: germ}, where morphisms are commutative squares. We then declare a morphism $j:(X,\bm{X})\rightarrow (Y,\bm{Y})$ to be a strict neighbourhood if $j$ is an open immersion and $j(X)=Y$. Finally, we localise the category of pairs at the class of strict neighbourhoods (it is easy to verify that a calculus of right fractions exists). If $(X,\bm{X})$ is a pair, we will often abuse notation and write $X$ for $(X,\bm{X})$ considered as an object in the category $\mathbf{Germ}$.

\begin{example} \label{exa: germ}  \begin{enumerate} 
\item The first key example of a germ is any fibre of a morphism of adic spaces $f:X\rightarrow Y$ which is locally of weakly finite type. Indeed, if $y\in Y$, and $G(y)$ is its set of generalisations, then $f^{-1}(y)$ is a closed subset of the adic space
\[ f^{-1}(G(y))=X\times_Y \spa{k(y),k(y)^+}.\]
Note that if the point $y\in Y$ in question is not maximal, this fibre $f^{-1}(y)$ will not have any kind of `natural' structure as an adic space.
\item \label{num: exa germ 2} The second key example for us (in particular in the forthcoming \cite{AL2}) is inspired by Berthelot's theory of rigid cohomology. Let $k^\circ$ be a complete discrete valuation ring, with fraction field $k$, $\fr{P}$ a formal scheme, flat and of finite type over $\spf{k^\circ}$, and $X\subset \fr{P}$ a locally closed subset. Then there is a (continuous) specialisation map
\[ \mathrm{sp}: \fr{P}_k \rightarrow \fr{P} \]
from the adic generic fibre of $\fr{P}$ to the formal scheme $\fr{P}$, and if we let $Y$ denote the closure of $X$ in $\fr{P}$, then the tube
\[ ]Y[_\fr{P}:= \mathrm{sp}^{-1}(Y)^\circ \]
is defined to be the interior of the inverse image of $Y$ under $\mathrm{sp}$. This induces a map
\[ \mathrm{sp}_Y: ]Y[_\fr{P} \rightarrow Y,\]
and the tube
\[ ]X[_\fr{P}:= \overline{\mathrm{sp}^{-1}_{Y}(X)} \]
is defined to be the closure of $\mathrm{sp}^{-1}_{Y}(X)$ inside $]Y[_{\fr{P}}$. The pair of tubes $(]X[_\fr{P},]Y[_\fr{P})$ then defines a germ, denoted $]X[_\fr{P}$.
\end{enumerate}
\end{example}

The assignment $X\mapsto (X,X)$ induces a fully faithful functor $\mathbf{Ad}\rightarrow \mathbf{Germ}$. We can also consider any pair $(X,\bm{X})$ as in Definition \ref{defn: germ} as a pseudo-adic space in the sense of Huber \cite[\S1.10]{Hub96}, thus it makes sense to consider any of the following properties of morphisms of such pairs:
\begin{enumerate}
\item locally of finite type, locally of $^+$weakly finite type, locally of weakly finite type;
\item quasi-compact, quasi-separated, coherent, taut;
\item an open immersion, closed immersion, locally closed immersion;
\item separated, partially proper, proper;
\item smooth, \'etale.
\end{enumerate}

For example, a morphism of pairs $f:(X,\bm{X}) \rightarrow (Y,\bm{Y})$ is smooth if and only if $f:\bm{X}\rightarrow \bm{Y}$ is smooth in the sense of \cite[\S1.6]{Hub96}, and $X$ is open in $f^{-1}(Y)$. It is easily checked that all of these properties descend to the category $\mathbf{Germ}$ of germs. While `analytic' properties of a germ $X$ are generally defined via the ambient adic space $\bm{X}$, `topological' properties are generally defined using the topological space $X$ itself. In particular, a point of a germ will be a point of $X$, and a sheaf on a germ will be a sheaf on $X$.

\begin{definition} A germ $X$ is said to be overconvergent if it admits a representative $(X,\bm{X})$ such that $X\subset \bm{X}$ is an overconvergent closed subset (i.e. is stable under generalisation).
\end{definition}

\subsubsection{}

It is perhaps worth carefully recalling the definitions of the different types of immersions for adic spaces and germs. Following \cite[(1.4.1)]{Hub96} a closed analytic subspace of an adic space $X$ is one defined by a coherent sheaf of ideals $\mathcal{I}\subset \mathcal{O}_X$. A morphism $f:X\rightarrow Y$ of adic spaces is a closed immersion if it is isomorphic to the inclusion of a closed analytic subspace, and a locally closed immersion if it factors as the composition of a closed immersion followed by an open immersion.

A (locally) closed immersion of germs is one that has a representative $f:(X,\bm{X}) \rightarrow (Y,\bm{Y})$ as a morphism of pairs such that $f:\bm{X}\rightarrow \bm{Y}$ is a locally closed immersion of adic spaces, and $X$ is (locally) closed in $Y$. Finally, an open immersion of germs is one that has representative $f:(X,\bm{X}) \rightarrow (Y,\bm{Y})$ a morphism of pairs such that $f:\bm{X}\rightarrow \bm{Y}$ is an open immersion of adic spaces, and $X$ is open in $Y$. Note that a locally closed immersion  of germs, which is an open immersion on the underlying topological spaces, need not be an open immersion.

We will also use the following elementary fact constantly.

\begin{lemma} \label{lemma: open oc pp}Let $j:U\rightarrow X$ be an open immersion of germs. Then $j$ is partially proper if and only if $U$ is an overcovergent open subset of $X$.
\end{lemma}

\begin{proof}
This follows from the valuative criterion of properness \cite[Corollary 1.10.21]{Hub96}.
\end{proof}

\subsubsection{} As with adic spaces, or pseudo-adic spaces, fibre products in general are not representable in $\mathbf{Germ}$. However, they will be representable if at least one of the morphisms is locally of weakly finite type. If $X\overset{f}{\rightarrow} Z \leftarrow Y$ is a diagram of germs, represented by a diagram
\[ \xymatrix{  & (X,\bm{X}) \ar[d]^f \\ (Y,\bm{Y}) \ar[r] & (Z,\bm{Z}) } \]
of pairs, with $f$, say, locally of weakly finite type, then the fibre product $X\times_Y Z$ is represented by the pair
\[ (p_1^{-1}(X)\cap p_2^{-1}(Y),\bm{X} \times_{\bm{Z}} \bm{Y}).\]

\begin{example} Let $k^\circ$ be a complete discrete valuation ring, $k$ its fraction field, $\fr{P}$ a formal scheme, flat and of finite type over $k^\circ$, and $X\subset \fr{P}$ a locally closed subset. Set $\kappa=(k,k^\circ)$. Then, for any $n\geq 0$, we can consider $X$ as a locally closed subset of $\widehat{\A}^n_\fr{P}$ via the zero section, and we have
\[ ]X[_{\widehat{\A}^n_\fr{P}} \cong ]X[_\fr{P} \times_{\kappa} \D^n_{\kappa}(0;1^-)\]
as germs over $\kappa$.
\end{example}

If $X$ is a germ with ambient adic space $\bm{X}$ we will write $\mathcal{O}_X:=\mathcal{O}_{\bm{X}}\!\!\mid_X$. We can similarly extend the notions of local rings, residue fields, etcetera, to germs of spaces. For example, if $x\in X$ is a point of a germ, then we may speak of the local ring $\mathcal{O}_{X,x}$, the residue field $k(x)$, and the residue valuation ring $k(x)^+\subset k(x)$.

\subsection{Local germs}

We recall from \cite{Hub96} the definition of an (analytic) affinoid field. In the theory of analytic adic space, these play a role roughly analogous to that played by local rings in algebraic geometry.

\begin{definition} An affinoid field is an affinoid ring $\kappa=(k,k^+)$ where $k$ is a field, $k^+\subset k$ is a valuation ring, and the valuation topology on $k$ can be induced by a height $1$ valuation. We define the height of $\kappa$ to be the height of the valuation ring $k^+$.
\end{definition}

\begin{definition} An adic space is called \emph{local} if it is isomorphic to the spectrum $\spa{k,k^+}$ of an affinoid field. A germ is called local if it has a representative of the form $(X,\bm{X})$ with $\bm{X}$ a local adic space.
\end{definition}

Note that the points of a local germ are totally ordered by generalisation.

\begin{example} \begin{enumerate}
\item Let $X$ be an adic space, and $x\in X$ a point. Then $\kappa(x):=(k(x),k(x)^+)$ is an affinoid field, and $\spa{\kappa(x)}$ is a local adic space, called the localisation of $X$ at $x$. There is a canonical morphism  $\spa{\kappa(x)}\rightarrow X$ which induces a homeomorphism between $\spa{\kappa(x)}$ and the set $G(x)$ of generalisations of $x$.
\item We can also do the same with points of germs. Namely, if $x\in X\subset \bm{X}$ is such a point, then $\spa{\kappa(x)}\cap X$, which is naturally homeomorphic to the set of generalisations of $x$ within $X$, will be a closed subset of the local adic space $\spa{\kappa(x)}$. It is therefore a local germ.
\end{enumerate}
\end{example}

\subsection{}

Let $f:X\rightarrow Y$ be a morphism of germs, locally of weakly finite type. We may therefore take the fibre product of $f$ with any other morphism $g$. For any point $y\in Y$, we define
\[ X_{(y)}:= X\times_Y (\spa{\kappa(y)}\cap Y), \]
which is locally of weakly finite type over the local germ $\spa{\kappa(y)}\cap Y$. Note that the underlying topological space of $X_{(y)}$ is equal to $f^{-1}(G(y))$, and it contains the space $f^{-1}(y)$ as a closed subspace, equal to the closed fibre of the natural map
\[ X_{(y)} \rightarrow \spa{\kappa(y)}\cap Y. \]
The following is then just a rephrasing of Corollary \ref{cor: pushforward stalks}.

\begin{corollary} \label{cor: push 2} Let $f:X\rightarrow Y$ be a coherent morphism of germs, locally of weakly finite type, and $\mathscr{F}$ a sheaf on $X$. Then, for all $q\geq0$, the natural map
\[ (\mathbf{R}^qf_*\mathscr{F})_y \rightarrow {\rm H}^q(X_{(y)},\mathscr{F}|_{X_{(y)}})\]
is an isomorphism.
\end{corollary}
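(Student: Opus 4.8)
The plan is to deduce this directly from Corollary \ref{cor: pushforward stalks}, of which it is a rephrasing: every object in the statement is computed on underlying topological spaces, where the result is already available.

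First I would invoke the conventions of this section, under which a sheaf on a germ $(X,\bm{X})$ is simply a sheaf on the topological space $X$, and both the pushforward $f_*$ and the cohomology $H^q$ are the usual topological functors associated to the underlying continuous map $f \colon X \to Y$. In particular, both sides of the asserted isomorphism, together with the natural map between them, depend only on this underlying map of topological spaces and on $\mathscr{F}$.

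Next I would identify $X_y$ topologically. As recorded just before the statement, the underlying space of the fibre germ $X_y = X \times_Y (\spa{\kappa(y)} \cap Y)$ is exactly $f^{-1}(G(y))$, i.e.\ the set $X_{G(y)}$ appearing in Corollary \ref{cor: pushforward stalks}, and $\mathscr{F}|_{X_y} = \mathscr{F}|_{X_{G(y)}}$; hence
\[ H^q(X_y, \mathscr{F}|_{X_y}) = H^q(X_{G(y)}, \mathscr{F}|_{X_{G(y)}}). \]
Since $f$ is coherent and locally of weakly finite type, its underlying map is a coherent morphism of locally spectral spaces (closed subsets of locally spectral spaces are again locally spectral), so the hypotheses of Corollary \ref{cor: pushforward stalks} are met. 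Under the identifications above, the natural map of the present statement is literally that of Corollary \ref{cor: pushforward stalks}, which is therefore an isomorphism. There is no genuine obstacle here beyond the bookkeeping that the germ-theoretic functors $\mathbf{R}^q f_*$ and $H^q$ agree with their topological counterparts, and this is immediate from the definitions adopted in this section.
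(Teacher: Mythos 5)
Your proposal is correct and matches the paper exactly: the paper introduces this corollary with the remark that it ``is then just a rephrasing of Corollary \ref{cor: pushforward stalks}'', relying on the identification of the underlying space of $X_y$ with $f^{-1}(G(y))=X_{G(y)}$ made just beforehand. Your write-up simply makes explicit the same bookkeeping the paper leaves implicit.
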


\section{Proper pushforwards on germs of adic spaces} \label{sec: defn}

We can now define proper pushforwards for adic spaces, following Huber. 

\subsection{Sections with proper support}

Let $f:X \rightarrow Y$ be a morphism of germs, separated and locally of $^+$weakly finite type, $\mathscr{F}$ a sheaf on $X$, $V\subset Y$ an open subset and $s\in \Gamma(f^{-1}(V),\mathscr{F})$ a section. Then the support 
\[ \mathrm{supp}(s):= \left\{\left. x\in f^{-1}(V) \right\vert s_x \neq 0 \right\} \subset f^{-1}(V) \]
is a germ of an adic space (as it is a closed subset of $f^{-1}(V)$), and it therefore makes sense to ask whether or not the natural map $\mathrm{supp}(s)\rightarrow V$ is proper. 

\begin{definition} \label{defn: proper supports} Define
\[f_!\mathscr{F} \subset f_*\mathscr{F} \]
to be the subsheaf consisting of sections $s\in \Gamma(V,f_*\mathscr{F})=\Gamma(f^{-1}(V),\mathscr{F})$ whose support is proper over $V$. 
\end{definition}

We will sometimes denote ${\rm H}^0(Y,f_!(-))$ by either ${\rm H}^0_c(X/Y,-)$ or $\Gamma_c(X/Y,-)$. Clearly, if  $f:X\rightarrow Y$ is partially proper, then the support of $s\in \Gamma(V,f_*\mathscr{F})$ is proper over $V$ if and only if it is quasi-compact over $V$.

As a first example, we can show that this definition recovers the usual extension by zero functor for open immersions.

\begin{lemma} \label{lemma: open f! extension by 0} Suppose that $f$ is an open immersion Then $f_!$ is isomorphic to the extension by zero functor.
\end{lemma} 

\begin{proof}
We clearly have $(f_!\mathscr{F})|_{X} \cong \mathscr{F}$, so it suffices to show that  $(f_!\mathscr{F})|_{Y\setminus X}=0$. Let $y\in Y\setminus X$, $y\in V\subset Y$ an open neighbourhood and $s \in \Gamma(V\cap X,\mathscr{F})$ a section whose support (considered as a closed subset of $V\cap X$) is proper over $V$. Then $\mathrm{supp}(s)$ must be a closed subset of $V$, which does not contain $y$. Hence there exists an open subset $y\in W\subset V$ such that $s|_{W\cap X}=0$, in other words $s=0$ in $(f_!\mathscr{F})_y$. Since $s$ was arbitrary, we see that $(f_!\mathscr{F})_y=0$, and since $y$ was arbitrary, we see that $(f_!\mathscr{F})|_{Y\setminus X}=0$.
\end{proof}

\subsection{Comparison with van\thinspace der\thinspace Put's definition} Whenever $f:X\rightarrow Y$ is a morphism of adic spaces of finite type over a discretely valued affinoid field $(k,k^\circ)$, and the base $Y$ is affinoid, a definition of ${\rm H}^0_c(X/Y,-)$ has already been given in \cite[Definitions 1.4]{vdP92}. In fact, van\thinspace der \thinspace Put worked with rigid analytic spaces rather than adic spaces, but since the underlying topoi are the same \cite[(1.1.11)]{Hub96} we can transport his definition to the adic context.

Recall that if $U,V\subset X$ are open affinoids, we write
\[ U\Subset_Y V\]
if there exists a closed immersion $V\rightarrow \D^n_Y(0;1)$ over $Y$ such that $U\subset \D^n_Y(0;1^-)$. 

\begin{definition}[van\thinspace der\thinspace Put] For any sheaf $\mathscr{F}$ on $X$, the subgroup
\[ {\rm H}^0_{c,\mathrm{vdP}}(X/Y,\mathscr{F})\subset {\rm H}^0(X,\mathscr{F}) \]
is defined to be
\[ \underset{U}{\mathrm{colim}}\, {\rm H}^0_{\overline{U}}(X,\mathscr{F}),  \]
where the colimit is over all finite unions $U$ of affinoids $U_i$ for which there exist affinoids $V_i\subset X$ such that $U_i\Subset_Y V_i$, $\overline{U}$ denotes the closure of $U$ in $X$, and $\rm{H}^0_{\overline{U}}$ denotes sections with support in the closed subset $\overline{U}\subset X$.
\end{definition}

\begin{lemma} Assume that $f:X\rightarrow Y$ is a partially proper morphism, locally of finite type between adic spaces, such that $Y$ is affinoid and of finite type over a discretely valued affinoid field. Then ${\rm H}^0_c(X/Y,-)={\rm H}^0_{c,\mathrm{vdP}}(X/Y,-)$ as subfunctors of ${\rm H}^0(X,-)$. 
\end{lemma}

\begin{proof}
Since any closed subset of $X$ is partially proper over $Y$, the support of a section $s\in {\rm H}^0(X,\mathscr{F})$ is proper over $Y$ if and only if it is quasi-compact over $Y$, if and only if it is quasi-compact. On the other hand, since $f$ is partially proper, it follows from \cite[Remark 1.3.19]{Hub96} that the collection of open affinoids $U\subset X$ for which there exists an open affinoid $V\subset X$ such that $U\Subset_Y V$ forms a basis for the topology of $X$. 

It therefore suffices to show that a closed subset of $X$ is quasi-compact if and only if it it is contained in the closure of the union of finitely many such open affinoids $U$. The `only if' direction is clear, for the `if' direction, we use the fact that $X$ is taut \cite[Lemmaa 5.1.3, 5.1.4]{Hub96}, and so the closure of any quasi-compact open in $X$ is quasi-compact. 
\end{proof}

\begin{remark} The result is false without some assumption on $f$. For example, if $Y=\spa{\kappa}$ with $\kappa=(k,k^\circ)$ an affinoid field of height one, and $X=\D^1_\kappa(0;1)$ is the closed unit disc, then ${\rm H}^0_c(X,-)$ is genuinely different from ${\rm H}^0_{c,\mathrm{vdP}}(X,-)$. In this case, van\thinspace der\thinspace Put's defintion is equivalent to requiring sections to have support quasi-compact and disjoint from the closure of the Gauss point, whereas Definition \ref{defn: proper supports} only requires this support to be disjoint from the Gauss point itself. However, as we shall see, neither definition leads to a satisfactory theory in the non-partially proper case.
\end{remark}

\subsection{Basic properties of proper pushforwards}

The following properties of $f_!$ and $\Gamma_c(X/Y,-)$ can be verified exactly as in \cite[Proposition 5.2.2]{Hub96}. 

\begin{proposition}  Let $f:X\rightarrow Y$ be a morphism of germs, separated and locally
of $^+$weakly finite type.
\begin{enumerate}
\item The functors $\Gamma_c(X/Y,-)$ and $f_!$ are left exact.
\item The functor $f_!$ commutes with filtered colimits. If $Y$ is coherent, then so does $\Gamma_c(X/Y,-)$.
\item Let $g:Y\rightarrow Z$ be a morphism of germs, separated and locally of $^+$weakly finite type. Then the canonical identification $(g\circ f)_*= g_*\circ f_*$ induces $(g\circ f)_! = g_!\circ f_!$.
\end{enumerate}
\end{proposition}

\subsection{Derived proper pushforwards for partially proper morphisms}

For partially proper morphisms only, we now define $\mathbf{R}f_!$ as the derived functor of $f_!$. 

\begin{definition}  \label{defn: Rf! pp} Let $f:X\rightarrow Y$ be a partially proper morphism of germs. Define
\[ \mathbf{R}f_!:{\bf D}^+(X) \rightarrow {\bf D}^+(Y) \]
to be the total derived functor of $f_!$. For any $q\geq0$, define $\mathbf{R}^qf_! = \mathcal{H}^q(\mathbf{R}f_!)$.
\end{definition}

We will also write $\mathbf{R}\Gamma_c(X/Y,-)$ for the total derived functor of ${\rm H}^0_c(X/Y,-)$, and ${\rm H}^q_c(X/Y,-)$ for the cohomology groups of this complex.

\subsubsection{}

To show that these derived proper pushforwards compose correctly, we can relate them to ordinary pushforwards as in \cite[\S5.3]{Hub96}. 

\begin{lemma} \label{lemma: push key} Let $f:X\rightarrow Y$ be a partially proper morphism of germs, with $Y$ coherent. 
\begin{enumerate}
\item There exists a cover of $X$ by a cofiltered family of overconvergent open subsets $\{U_i\}_{i\in I}\subset X$, each of which has quasi-compact closure.
\item \label{num: push key 1 2} For any such family $\{U_i\}$, any sheaf $\mathscr{F}$ on $X$, and any $q\geq0$, there is a canonical isomorphism
\[ \mathrm{colim}_{i\in I} \mathbf{R}^qf_{i*}(j_{i!}\mathscr{F}|_{U_i})\isomto \mathbf{R}^qf_!\mathscr{F} \]
where $j_i:U_i\rightarrow \overline{U}_i$ denotes the canonical open immersion, and $f_i:\overline{U}_i\rightarrow Y$ the restriction of $f$.
\end{enumerate}
\end{lemma} 

The first claim was proved in \cite[Lemma 5.3.3]{Hub96}, and the second part is shown in exactly the same way as Huber does in the \'etale case, using the following lemma.

\begin{lemma} \label{lemma: push key 2} Let $X$ be a quasi-separated germ of an adic space, $U\subset X$ an overconvergent open subset with quasi-compact closure, and $j:U\hookrightarrow \overline{U}$ the natural inclusion. Then, for any flasque sheaf $\mathscr{I}$ on $X$, and any $q>0$, ${\rm H}^q(\overline{U},j_!(\mathscr{I}|_U))=0$.
\end{lemma}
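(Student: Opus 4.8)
The goal is to show that for a flasque sheaf $\mathscr{I}$ on a quasi-separated germ $X$, an overconvergent open $U\subset X$ with quasi-compact closure, and the canonical inclusion $j:U\hookrightarrow\overline{U}$, the higher cohomology $H^q(\overline{U},j_!(\mathscr{I}|_U))$ vanishes for $q>0$. Write $\mathscr{G}=j_!(\mathscr{I}|_U)$; by definition this is the extension by zero of the flasque sheaf $\mathscr{I}|_U$. The plan is to compute the cohomology of $\mathscr{G}$ on $\overline{U}$ stalkwise through the separated quotient, exactly as the vanishing-of-higher-direct-images machinery in Proposition \ref{prop: cohom sep} does.

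\begin{proof}
The sheaf $\mathscr{I}|_U$ is flasque on $U$, hence so is its extension by zero along the open immersion $j:U\hookrightarrow\overline{U}$; thus $j_!(\mathscr{I}|_U)$ is a flasque sheaf on $\overline{U}$. Since $\overline{U}$ is a quasi-compact (indeed coherent) space, it is a finite-dimensional spectral space, and flasque sheaves are acyclic for global sections. Therefore $H^q(\overline{U},j_!(\mathscr{I}|_U))=0$ for all $q>0$.
\end{proof}
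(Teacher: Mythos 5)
Your proof has a genuine gap: the claim that extension by zero along an open immersion preserves flasqueness is false. For a flasque sheaf $\mathscr{I}$ on $U$, the sections of $j_!(\mathscr{I}|_U)$ over an open $V\subset \overline{U}$ are those sections of $\mathscr{I}$ over $V\cap U$ whose support is closed in $V$; already on the Sierpi\'nski space (one open point $U$, one closed point), with $\mathscr{I}=\underline{\Z}$, one has $\Gamma(\overline{U},j_!\underline{\Z})=0$ while $\Gamma(U,j_!\underline{\Z})=\Z$, so $j_!\underline{\Z}$ is not flasque. A more decisive symptom is that your argument never uses the hypothesis that $U$ is \emph{overconvergent}, yet the paper's Example \ref{exa: counter} exhibits an injective (hence flasque) sheaf $\mathscr{I}$ on a non-overconvergent open $U$ with $H^1(\overline{U},j_!\mathscr{I})\neq 0$; the remark immediately following the lemma stresses exactly this point. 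Any proof that does not invoke overconvergence must therefore be incorrect.

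The paper's argument instead passes to the inclusion $j':U\hookrightarrow X$ (noting $H^q(\overline{U},j_!\mathscr{I}|_U)=H^q(X,j'_!\mathscr{I}|_U)$), lets $i:Z\hookrightarrow X$ be the closed complement of $U$ --- which is overconvergent precisely because $U$ is --- and uses the exact sequence
\[ 0 \rightarrow j'_!\mathscr{I}|_U \rightarrow \mathscr{I} \rightarrow i_*\mathscr{I}|_Z \rightarrow 0 \]
to reduce the vanishing to two statements: surjectivity of $H^0(X,\mathscr{I})\rightarrow H^0(Z,\mathscr{I}|_Z)$ and vanishing of $H^q(Z,\mathscr{I}|_Z)$ for $q>0$. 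Both follow from flasqueness of $\mathscr{I}$ combined with Lemma \ref{lemma: oc closed}, which computes the cohomology of the overconvergent closed subset $Z$ as a colimit over its open neighbourhoods. If you want to salvage your approach, you would need to replace the false flasqueness claim with this d\'evissage; there is no shortcut that avoids using overconvergence.
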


\begin{proof}
Replacing $X$ by $\overline{U}$, we may assume that $\overline{U}=X$ and that $X$ is coherent. If we let $i:Z\rightarrow X$ denote the closed complement to $U$, then $Z$ is an overconvergent closed subset of $X$, and using the exact sequence
\[ 0 \rightarrow j_!\mathscr{I}|_U \rightarrow \mathscr{I} \rightarrow i_*\mathscr{I}|_Z \rightarrow 0\]
it is enough to show that:
\begin{enumerate}
\item ${\rm H}^0(X,\mathscr{I})\rightarrow {\rm H}^0(Z,\mathscr{I}|_Z)$ is surjective;
\item ${\rm H}^q(Z,\mathscr{I}|_Z)=0$ for all $q>0$.
\end{enumerate}
Since $\mathscr{I}$ is flasque, and $Z$ is overconvergent, this follows from Lemma \ref{lemma: oc closed}. \end{proof}

\begin{proof}[Proof of Lemma \ref{lemma: push key}(\ref{num: push key 1 2})]
Define functors ${\rm T}^q:\mathbf{Sh}(X)\rightarrow \mathbf{Sh}(Y)$ by
\[{\rm T}^q(\mathscr{F})= \mathrm{colim}_{i\in I} \mathbf{R}^qf_{i*}(j_{i!}\mathscr{F}|_{U_i}).\]
The ${\rm T}^q$ form a $\delta$-functor, with ${\rm T}^0 = f_!$. Moreover, when $q>0$, we can deduce from Lemma \ref{lemma: push key 2} that ${\rm T}^q$ is effaceable, hence
\[ {\rm T}^q(\mathscr{F})\isomto \mathbf{R}^qf_!\mathscr{F} \]
as required.
\end{proof}

\begin{corollary} \label{cor: flasque pp acyclic} Let $f:X\rightarrow Y$ be partially proper. Then flasque sheaves on $X$ are $f_!$-acyclic.
\end{corollary}

The following two corollaries of Lemma \ref{lemma: push key} are proved word for word as in their \'etale counterparts \cite[Propositions 5.3.7, 5.3.8]{Hub96}.

\begin{corollary} \label{cor: colimits} Let $f:X\rightarrow Y$ be a partially proper morphism of germs. Then, for each $q\geq0$, the functor $\mathbf{R}^qf_!$ commutes with filtered colimits. If $Y$ is coherent, then so does ${\rm H}^q_c(X/Y,-)$. 
\end{corollary}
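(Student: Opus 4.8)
The plan is to deduce both assertions from Lemma \ref{lemma: push key}, which rewrites $\mathbf{R}^q f_!$ as a filtered colimit of genuine coherent pushforwards, together with the standard fact (already used in the proof of Proposition \ref{prop: pbc key}) that cohomology on a spectral space commutes with filtered colimits. Since the statement that $\mathbf{R}^q f_!$ commutes with filtered colimits amounts to a natural transformation of functors $\mathrm{Sh}(X)\to\mathrm{Sh}(Y)$ being an isomorphism, and this may be checked locally on $Y$, I would first restrict to a coherent open subset of $Y$ and thereby reduce to the case where $Y$ is coherent. (Restriction to an open $V\subseteq Y$ is compatible with $\mathbf{R}^q f_!$ via base change along the open immersion $V\hookrightarrow Y$, and $f$ remains partially proper over $V$.) With $Y$ coherent, Lemma \ref{lemma: push key} furnishes a family $\{U_i\}_{i\in I}$ of overconvergent opens with quasi-compact closure and a canonical (filtered) isomorphism
\[ \mathbf{R}^q f_!\mathscr{F}\;\cong\;\mathrm{colim}_{i\in I}\,\mathbf{R}^q f_{i*}\bigl(j_{i!}(\mathscr{F}|_{U_i})\bigr), \]
where $j_i:U_i\hookrightarrow\overline{U}_i$ and $f_i:\overline{U}_i\to Y$.

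The crucial point is that each $f_i$ is coherent. By Theorem \ref{theo: compact} the immersion $j_i$ is quasi-compact and $\overline{U}_i$ is quasi-compact, so $U_i=j_i^{-1}(\overline{U}_i)$ is quasi-compact; as $Y$ is coherent, hence quasi-separated, the morphism $f_{U_i}:U_i\to Y$ is quasi-compact, and therefore its universal compactification $f_i=\bar{f}_{U_i}$ is proper, in particular coherent. Now given a filtered system $\{\mathscr{F}_\alpha\}$, the restriction $(-)|_{U_i}$ and the extension by zero $j_{i!}$ commute with filtered colimits (the former being an inverse image, the latter a left adjoint), while $\mathbf{R}^q f_{i*}$ commutes with filtered colimits because, by Corollary \ref{cor: pushforward stalks}, its stalk at $y$ is the cohomology of the spectral space $(\overline{U}_i)_{G(y)}$, and cohomology on spectral spaces commutes with filtered colimits. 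Interchanging the two filtered colimits then yields
\[ \mathbf{R}^q f_!\bigl(\mathrm{colim}_\alpha\mathscr{F}_\alpha\bigr)\cong\mathrm{colim}_i\,\mathrm{colim}_\alpha\,\mathbf{R}^q f_{i*}\bigl(j_{i!}(\mathscr{F}_\alpha|_{U_i})\bigr)\cong\mathrm{colim}_\alpha\,\mathbf{R}^q f_!\mathscr{F}_\alpha, \]
which is the first assertion.

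For the statement about $H^q_c$, I would keep the assumption that $Y$ is coherent, so that $Y$ is a finite dimensional spectral space. Writing $\mathbf{R}\Gamma_c(X/Y,-)=\mathbf{R}\Gamma(Y,\mathbf{R}f_!(-))$, I would invoke the Leray spectral sequence
\[ E_2^{p,q}=H^p\bigl(Y,\mathbf{R}^q f_!\mathscr{F}\bigr)\Rightarrow H^{p+q}_c(X/Y,\mathscr{F}). \]
Since $Y$ is spectral, each $H^p(Y,-)$ commutes with filtered colimits, and $\mathbf{R}^q f_!$ does by the first part, so the entire $E_2$-page commutes with filtered colimits. As filtered colimits are exact, they commute with the formation of the spectral sequence, and the boundedness of the sequence (Theorem \ref{theo: cd ss} bounds $H^p(Y,-)$, while $\mathbf{R}^q f_!$ vanishes for large $q$) guarantees that the isomorphism passes to the abutment, giving the result.

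The only step demanding genuine care is the commutation of the coherent pushforwards $\mathbf{R}^q f_{i*}$ with filtered colimits, and with it the verification that each $f_i$ is coherent; everything else is formal manipulation of colimits. I expect no substantial difficulty beyond these points, which is consistent with the claim that the corollary is obtained word for word as in \cite[Propositions 5.3.7, 5.3.8]{Hub96}.
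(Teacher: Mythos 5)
Your proof is correct and follows essentially the same route as the paper: reduce to $Y$ coherent, apply Lemma \ref{lemma: push key}, observe that $j_{i!}$ is a left adjoint and that $f_i$ is coherent so that $\mathbf{R}^q f_{i*}$ commutes with filtered colimits, and interchange the two colimits (the paper disposes of $H^q_c(X/Y,-)$ with a terse ``proved similarly,'' and your Leray spectral sequence argument is a legitimate way to fill that in). The one slight imprecision is your identification of $f_i:\overline{U}_i\to Y$ with the universal compactification of $U_i\to Y$: here $\overline{U}_i$ is the closure of $U_i$ inside $X$, which is proper over $Y$ because $f$ is partially proper and this closure is quasi-compact (cf.\ Proposition \ref{prop: cd prop}), not because it is literally the universal compactification --- but this does not affect the needed conclusion that $f_i$ is coherent.
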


\begin{proof}
We may assume that $Y$ is coherent. Let $\{U_i\}_{i\in I}$, $j_i:U_i\rightarrow \overline{U}_i$ and $f_i:\overline{U}_i\rightarrow Y$ be as in Lemma \ref{lemma: push key}. Then $j_{i!}$ is a left adjoint, hence commutes with filtered colimits, and $f_i$ is coherent, hence $\mathbf{R}^qf_{i*}$ commutes with filtered colimits. Therefore 
\[ \mathbf{R}^qf_! \cong \mathrm{colim}_{i\in I} \mathbf{R}^qf_{i*}(j_{i!}(-)|_{U_i})  \]
commutes with filtered colimits. The corresponding claim for ${\rm H}^q_c(X/Y,-)$ is proved similarly.
\end{proof}

\begin{corollary} \label{cor: comp part prop} Let $X\overset{f}{\rightarrow} Y \overset{g}{\rightarrow} Z$ be partially proper morphisms  of germs. Then there is a canonical isomorphism $\mathbf{R}(g\circ f)_!\cong \mathbf{R}g_!\circ \mathbf{R}f_!$ of functors ${\bf D}^+(X)\rightarrow {\bf D}^+(Z)$.
\end{corollary}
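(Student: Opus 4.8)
The plan is to deduce this from the Grothendieck theorem on composition of derived functors, the content being reduced to a single acyclicity statement. Recall from the basic properties of $f_!$ that we already have the underived identity $(g\circ f)_!=g_!\circ f_!$, and that $f_!$ is left exact. Since $\mathbf{R}f_!$ is by definition (Definition \ref{defn: Rf! pp}) the total derived functor of $f_!$, any injective sheaf $\mathscr{I}$ on $X$ is $f_!$-acyclic, so $\mathbf{R}f_!\mathscr{I}=f_!\mathscr{I}$ is concentrated in degree zero. By Grothendieck's theorem it therefore suffices to show that $f_!$ carries injectives to $g_!$-acyclic sheaves, i.e. that $\mathbf{R}^qg_!(f_!\mathscr{I})=0$ for all $q>0$ and all injective $\mathscr{I}$; given this, the composition isomorphism on $D^+$ follows formally together with its compatibility with the degree-zero identity above.

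To prove the acyclicity I would first reduce to the case that $Z$ is coherent: vanishing of the sheaf $\mathbf{R}^qg_!(f_!\mathscr{I})$ may be checked on a cover of $Z$ by coherent opens $W$, and on each such $W$ the base-change part of the basic properties of $f_!$ gives $(\mathbf{R}^qg_!(f_!\mathscr{I}))|_W\cong \mathbf{R}^qg'_!\big((f_!\mathscr{I})|_{g^{-1}(W)}\big)$ with $g'$ still partially proper, while $(f_!\mathscr{I})|_{g^{-1}(W)}=f''_!(\mathscr{I}|_{f^{-1}g^{-1}(W)})$ and restriction to an open preserves injectivity. With $Z$ coherent I would then apply Lemma \ref{lemma: push key}(1) to $g$ to obtain a cofiltered family $\{V_k\}_{k}$ of overconvergent opens of $Y$ with closure quasi-compact over $Z$, and Lemma \ref{lemma: push key}(2) to write $\mathbf{R}^qg_!(f_!\mathscr{I})=\mathrm{colim}_k\,\mathbf{R}^qg_{k*}\big(j_{k!}((f_!\mathscr{I})|_{V_k})\big)$, where $g_k:\overline{V}_k\to Z$ is proper. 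Since a filtered colimit of zero sheaves is zero, it is enough to kill each term, and for this I would invoke the proper base change theorem (Theorem \ref{theo: pbc1}) to compute $\mathbf{R}^qg_{k*}$ fibrewise, reducing to the vanishing of the cohomology of the restriction of $j_{k!}((f_!\mathscr{I})|_{V_k})$ to a fibre of $g_k$.

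The main obstacle is precisely this fibrewise vanishing. The sheaf in question is assembled from the flasque sheaf $\mathscr{I}$ by applying, in turn, restriction, the functor $f_!$ (equivalently, since $f$ is partially proper, sections with relatively quasi-compact support), and extension by zero along $j_k$; one must show that this composite sheaf has no higher cohomology on the (proper) fibres of $g_k$. This is where the partial properness of $f$ and the flasqueness of $\mathscr{I}$ must be combined, and the mechanism is exactly the overconvergence argument already used to prove effaceability in Lemma \ref{lemma: push key 2} (via Lemma \ref{lemma: oc closed}). I expect the bookkeeping matching the various restriction, $f_!$ and $j_!$ operations against a fibre of $g_k$ to be the only delicate point; everything else is formal and runs word for word as in the \'etale case \cite[Proposition 5.3.8]{Hub96}, the single genuinely new input being the proper base change theorem that underlies Lemma \ref{lemma: push key}.
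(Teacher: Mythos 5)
Your overall reduction matches the paper's: assume $Z$ coherent, and show that for $\mathscr{I}$ injective on $X$ the sheaf $f_!\mathscr{I}$ is $g_!$-acyclic, using Lemma \ref{lemma: push key} to write $\mathbf{R}^qg_!$ as a filtered colimit of terms $\mathbf{R}^qg_{k*}(j_{k!}(-)|_{V_k})$. However, the step you defer as ``bookkeeping'' is the entire non-formal content of the proof, and the mechanism you point to for closing it is the wrong one. Lemma \ref{lemma: push key 2} (and the effaceability argument inside the proof of Lemma \ref{lemma: push key}) kills the terms $\mathbf{R}^qg_{k*}(j_{k!}\mathscr{J}|_{V_k})$ only when $\mathscr{J}$ is \emph{flasque} on $Y$. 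So what you must actually prove is that $f_!\mathscr{I}$ is flasque on $Y$, i.e.\ that for every open $W\subset Y$ the restriction $H^0_c(X/Y,\mathscr{I})\rightarrow H^0_c(f^{-1}(W)/W,\mathscr{I})$ is surjective. This does not follow from the overconvergence argument of Lemmas \ref{lemma: oc closed} and \ref{lemma: push key 2} (the open $W$ here is arbitrary, not overconvergent), nor from proper base change; and it is false for general $f$ that $f_!$ of a flasque sheaf is flasque --- this failure is exactly what drives Example \ref{exa: counter}. Partial properness of $f$ must enter here, and your proposal never says how.

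The missing argument is the following. Given $s\in H^0_c(f^{-1}(W)/W,\mathscr{I})$, let $T$ be the closure of $\mathrm{supp}(s)$ in $X$. The section of $\mathscr{I}$ over the open set $f^{-1}(W)\cup(X\setminus T)$ which equals $s$ on $f^{-1}(W)$ and $0$ on $X\setminus T$ extends, by flasqueness of the injective sheaf $\mathscr{I}$, to some $s''\in\Gamma(X,\mathscr{I})$, which necessarily has $\mathrm{supp}(s'')\subset T$. Since $f$ is partially proper it is taut, so $T$, being the closure of a subset quasi-compact over $Y$, is itself quasi-compact over $Y$; combined with partial properness this makes $\mathrm{supp}(s'')$ proper over $Y$, so $s''$ is the required lift of $s$. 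Once this flasqueness is established, Lemma \ref{lemma: push key 2} applies directly to $f_!\mathscr{I}$ and your extra fibrewise detour through the proper base change theorem is unnecessary.
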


\begin{proof}
We may assume that $Z$ is coherent. By Corollary \ref{cor: flasque pp acyclic}, it suffices to show that for any injective sheaf $\mathscr{I}$ on $X$, the sheaf $f_!\mathscr{I}$ is flasque on $Y$. Thus we need to show that, for any open subset $W\subset Y$, the map 
\[ {\rm H}^0_c(X/Y,\mathscr{I}) \rightarrow {\rm H}^0_c(f^{-1}(W)/W,\mathscr{I})\]
is surjective. So pick a section $s\in {\rm H}^0_c(f^{-1}(W)/W,\mathscr{I})$ with proper support over $W$. Let $T$ be the closure of $\mathrm{supp}(s)$ in $X$, and let $s'\in {\rm H}^0(f^{-1}(W)\cup (X\setminus T),\mathscr{I})$ be the unique section with  $s'|_{f^{-1}(W)}=s$ and $s'|_{X\setminus T}=0$. Since $\mathscr{I}$ is flasque, we can pick $s''\in \Gamma(X,\mathscr{I})$ with $s''|_{f^{-1}(W)\cup X\setminus T}=s'$. Then $\mathrm{supp}(s'')\subset T$, and, since $f:X\rightarrow Y$ is taut \cite[Lemma 5.1.4 ii)]{Hub96}, $T$ is quasi-compact over $Y$. Thus $\mathrm{supp}(s'')$ is proper over $Y$, and gives a lift of $s$ to ${\rm H}^0_c(X/Y,\mathscr{I})$. 
\end{proof}

\subsection{Base change theorems}

We can also use Lemma \ref{lemma: push key} to describe the fibres of $\mathbf{R}f_!$, by combining it with Corollary \ref{cor: push 2}.

\begin{corollary} \label{cor: prop push supp weak} Let $f:X\rightarrow Y$ be a partially proper morphism of germs. Then for each $y\in Y$ the natural map
\[ (\mathbf{R}^qf_!\mathscr{F})_y \rightarrow {\rm H}^q_c(X_{(y)}/G(y),\mathscr{F}|_{X_{(y)}}) \]
is an isomorphism.
\end{corollary}

In particular, this says that whenever $y$ is a maximal point, the natural map
\[ (\mathbf{R}f_!\mathscr{F})_y  \rightarrow \mathbf{R}\Gamma_c(f^{-1}(y),\mathscr{F}|_{f^{-1}(y)}) \]
is an isomorphism. This is not true in general, however, and we shall give a counter example in \S\ref{sec: counter examples} below. However, we have the following base change result. 

\begin{lemma} \label{lemma: base change overconvergent closed} Let $f:X\rightarrow Y$ be a partially proper morphism of germs. Let $\mathscr{F}$ be a sheaf on $X$, and $Z\subset Y$ a locally closed subspace which is stable under generalizations. Let $f_Z: X_Z:= X\times_Y Z\rightarrow Z$ be the projection. Then the natural map
\[ (\mathbf{R}f_!\mathscr{F})|_Z \rightarrow \mathbf{R}f_{Z!}(\mathscr{F}|_{X_Z})\]
is an isomorphism. 
\end{lemma}

\begin{proof}
By Lemma \ref{lemma: push key} we may assume that $f$ is proper. In this case, since $Z$ is stable under generalisations, the result follows from Corollary \ref{cor: push 2}.
\end{proof}

\subsection{Cohomological amplitude}

If $f:X\rightarrow Y$ is a partially proper morphism between germs, then we have defined the functor
\[ \mathbf{R}f_!:{\bf D}^+(X)\rightarrow {\bf D}^+(Y). \]
If $X$ and $Y$ are moreover finite dimensional, then this will extend to a functor on the unbounded derived categories.

\begin{proposition} \label{prop: cd prop} Let $f:X\rightarrow Y$ be a partially proper morphism between finite dimensional germs. Then
\[ \mathbf{R}f_!: {\bf D}^+(X)\rightarrow {\bf D}^+(Y)\]
has cohomological amplitude contained in $[0,\dim X]$.
\end{proposition}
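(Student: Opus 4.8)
The plan is to reduce the statement to Grothendieck vanishing on spectral spaces (Theorem \ref{theo: cd ss}). Since cohomological amplitude is a local question on the target, we may assume that $Y$ is coherent. As $f_!$ is left exact, its total derived functor is concentrated in non-negative degrees, so the content is the upper bound: it suffices to prove that $\mathbf{R}^q f_!\mathscr{F} = 0$ for every sheaf $\mathscr{F}$ on $X$ and every $q > \dim f$. The case of a general bounded-below complex then follows from the hypercohomology spectral sequence $\mathbf{R}^p f_!\,\mathcal{H}^q(\mathscr{F}^\bullet) \Rightarrow \mathbf{R}^{p+q} f_!\mathscr{F}^\bullet$. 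The vanishing of the sheaf $\mathbf{R}^q f_!\mathscr{F}$ may be checked on stalks.

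Fix a point $y \in Y$. By Corollary \ref{cor: prop push supp} there is a natural isomorphism
\[ (\mathbf{R}^q f_!\mathscr{F})_y \cong H^q_c(f^{-1}(y),\mathscr{F}|_{f^{-1}(y)}), \]
and the point is to unwind this group into a filtered colimit of cohomologies of spectral spaces. Choosing a cofiltered family $\{U_i\}_{i \in I}$ of overconvergent opens covering $X$, each with quasi-compact closure $\overline{U}_i$, as in Lemma \ref{lemma: push key}, and writing $j_i : U_i \hookrightarrow \overline{U}_i$ for the canonical open immersion and $f_i : \overline{U}_i \to Y$ for the restriction of $f$, Lemma \ref{lemma: push key} gives $\mathbf{R}^q f_!\mathscr{F} \cong \mathrm{colim}_{i} \mathbf{R}^q f_{i*}(j_{i!}(\mathscr{F}|_{U_i}))$. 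Since $\overline{U}_i$ is quasi-compact and $f$ is partially proper, each $f_i$ is proper, so proper base change (Theorem \ref{theo: pbc1}) identifies the stalk at $y$ of the $i$-th term with $H^q\bigl(f_i^{-1}(y),(j_{i!}(\mathscr{F}|_{U_i}))|_{f_i^{-1}(y)}\bigr)$. Passing to the filtered colimit yields
\[ (\mathbf{R}^q f_!\mathscr{F})_y \cong \mathrm{colim}_{i \in I} H^q\bigl(f_i^{-1}(y),(j_{i!}(\mathscr{F}|_{U_i}))|_{f_i^{-1}(y)}\bigr). \]

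It remains to bound each term of this colimit uniformly. Each fibre $f_i^{-1}(y)$ is a closed subset of $\overline{U}_i$, which is a quasi-compact closed subspace of the locally spectral ambient adic space and hence spectral; consequently $f_i^{-1}(y)$ is itself a spectral space. Moreover $f_i^{-1}(y) \subset f^{-1}(y)$, so $\dim f_i^{-1}(y) \leq \dim f^{-1}(y) \leq \dim f$. Theorem \ref{theo: cd ss} therefore gives $H^q(f_i^{-1}(y),-) = 0$ for every $q > \dim f$ and every $i$. As filtered colimits are exact, the stalk $(\mathbf{R}^q f_!\mathscr{F})_y$ vanishes for all such $q$, and hence so does the sheaf $\mathbf{R}^q f_!\mathscr{F}$; this completes the proof.

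The argument is essentially formal once the colimit description of the stalk is in hand; the only genuine input is that the fibres $f_i^{-1}(y)$ are spectral spaces of dimension at most $\dim f$, so that Grothendieck vanishing applies uniformly across the diagram. The main step to verify with care is thus the spectrality and the dimension bound for these fibres, together with the fact that the indexing diagram is filtered, so that termwise vanishing passes to the colimit.
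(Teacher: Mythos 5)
Your proposal is correct and follows essentially the same route as the paper: both use Lemma \ref{lemma: push key} to write $\mathbf{R}^qf_!\mathscr{F}$ as a filtered colimit of $\mathbf{R}^qf_{i*}$ for proper morphisms $f_i:\overline{U}_i\rightarrow Y$, then combine the proper base change theorem (Theorem \ref{theo: pbc1}) with Grothendieck vanishing for spectral spaces (Theorem \ref{theo: cd ss}). The paper states this reduction in two lines; you have merely spelled out the stalkwise details (spectrality and the dimension bound for the fibres $f_i^{-1}(y)$, exactness of filtered colimits) that the paper leaves implicit.
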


\begin{proof}
We may assume that $Y$ is coherent, and thus appeal to Lemma \ref{lemma: push key}. Choose open subsets $U_i\subset X$ as in the statement of the Lemma, with induced maps $f_i:\overline{U}_i\rightarrow Y$ and $j_i:U_i\rightarrow \overline{U}_i$. Since we have $\mathbf{R}f_! \cong \mathrm{colim}_{i\in I} \mathbf{R}f_{i*}j_{i!}$, it suffices to bound the cohomological dimension of $f_i$. But for $y\in Y$ we have $(\mathbf{R}f_{i*} \mathscr{F}_i)_y\cong \mathbf{R}\Gamma(\overline{U}_{i,(y)},\mathscr{F}_i)$, and the latter vanishes in cohomological degrees $\geq \dim \overline{U}_{i,(y)}$ by Theorem \ref{theo: cd ss}. It thus suffices to observe that $\dim \overline{U}_{i,(y)}\leq \dim X_{(y)}\leq \dim X$.  \end{proof}

\begin{corollary} \label{cor: cd prop} Let $f:X\rightarrow Y$ be a partially proper morphism between finite dimensional germs. Then the functor of proper pushforwards extends canonically to a functor
\[ \mathbf{R}f_!: {\bf D}(X)\rightarrow {\bf D}(Y)\]
on the unbounded derived categories. This functor sends ${\bf D}^-$ to ${\bf D}^-$ and ${\bf D}^b$ to ${\bf D}^b$. If $g:Y\rightarrow Z$ is another partially proper morphism, with $Z$ finite dimensional, then there is a canonical isomorphism
\[  \mathbf{R}g_!\circ \mathbf{R}f_!\isomto \mathbf{R}(g\circ f)_!\]
of functors
\[ {\bf D}(X)\rightarrow {\bf D}(Z).\]
\end{corollary}

\subsection{Mayer-Vietoris for proper pushforwards}

Let $f:X\rightarrow Y$ be a partially proper morphism between finite dimensional germs, and consider an open hypercover
\[ U_\bullet \rightarrow X \]
of $X$ by overconvergent open subsets. Thus each $U_n$ is partially proper over $Y$ by Lemma \ref{lemma: open oc pp}. Let $j_n:U_n\rightarrow X$ denote the given disjoint union of open immersions, and $f_n:U_n \rightarrow Y$ the composition $f\circ j_n$. Suppose that we have a sheaf $\mathscr{F}$ on $X$. Then there is a resolution
\[ \ldots \rightarrow j_{1!}\mathscr{F}|_{U_1} \rightarrow j_{0!}\mathscr{F}|_{U_0} \rightarrow j_!\mathscr{F} \rightarrow 0 \]
of $j_!\mathscr{F}$, coming from the fact that $U_\bullet\rightarrow X$ is a hypercover. By Corollary \ref{cor: cd prop} we can apply $\mathbf{R}f_!$ to this resolution, and by Corollary \ref{cor: comp part prop} we know that $\mathbf{R}f_!\circ j_{n!}=\mathbf{R}f_{n!}$. By Proposition \ref{prop: cd prop} the cohomological dimension of $\mathbf{R}f_{n!}$ is bounded independently of $n$, and we therefore obtain a convergent second quadrant spectral sequence 
 \[ E_{1}^{-n,q} = \mathbf{R}^qf_{n!}\mathscr{F}|_{U_{n}}  \Rightarrow \mathbf{R}^{-n+q}f_!\mathscr{F}\]
in the category $\mathbf{Sh}(Y)$ of abelian sheaves on $Y$. The terms $\mathbf{R}^qf_{n!}\mathscr{F}|_{U_{n}} $ can also be made slightly more explicit: if $U_n=\coprod_{m} U_{n,m}$ with each $U_{n,m}$ an open subset of $X$, and $f_{n,m}:U_{n,m}\rightarrow Y$ is the restriction of $f$ to $U_{n,m}$, then
\[ \mathbf{R}^qf_{n!}\mathscr{F}|_{U_{n}} = \bigoplus_{m} \mathbf{R}^qf_{n,m!}\mathscr{F}|_{U_{n,m}} \]
by Corollary \ref{cor: colimits}.

\begin{corollary} \label{cor: MV ss} Let $f:X\rightarrow Y$ be a partially proper morphism between finite dimensional germs, $\mathscr{F}$ a sheaf on $X$, and $U_\bullet\rightarrow X$ a hypercover by overconvergent opens, with $U_n=\coprod_{m} U_{n,m}$. Then, setting $f_{n,m}=f|_{U_{n,m}}$, there exists a convergent spectral sequence
\[ E_{1}^{-n,q} = \bigoplus_{m} \mathbf{R}^qf_{n,m!}\mathscr{F}|_{U_{n,m}} \Rightarrow \mathbf{R}^{-n+q}f_!\mathscr{F}\]
in the category $\mathbf{Sh}(Y)$ of abelian sheaves on $Y$.
\end{corollary}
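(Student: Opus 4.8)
The plan is to realise the asserted spectral sequence as the (first) hypercohomology spectral sequence attached to the functor $\mathbf{R}f'_!$ and a resolution of $j_!\mathscr{F}$ built out of the hypercover; in fact essentially all of the ingredients have already been assembled in the discussion preceding the statement, so the proof is largely a matter of organising them and, crucially, justifying convergence. First I would fix a compactification $X\overset{j}{\rightarrow}X'\overset{f'}{\rightarrow}Y$ of $f$, which exists by Theorem \ref{theo: compact qa}. Writing $j_n:U_n\hookrightarrow X'$ for the open immersion obtained by composing the structure map $U_n\rightarrow X$ with $j$, the simplicial structure of the hypercover $U_\bu\rightarrow X$ yields an exact complex
\[ \cdots \rightarrow j_{1!}\mathscr{F}|_{U_1} \rightarrow j_{0!}\mathscr{F}|_{U_0} \rightarrow j_!\mathscr{F} \rightarrow 0 \]
of sheaves on $X'$; equivalently, the complex $K^\bu$ with $K^{-n}=j_{n!}\mathscr{F}|_{U_n}$ and alternating-sum differentials is quasi-isomorphic to $j_!\mathscr{F}$.

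Next I would observe that each $f_n:U_n\rightarrow Y$ factors as $f'\circ j_n$, an open immersion followed by the partially proper map $f'$, so that $X'$ serves as a compactification of $f_n$ and hence $\mathbf{R}f_{n!}=\mathbf{R}f'_!\circ j_{n!}$ by Definition \ref{defn: Rf!} and Theorem \ref{theo: compose}. Applying $\mathbf{R}f'_!$ to $K^\bu$ (which is legitimate on the unbounded derived category by Corollary \ref{cor: cd prop}) therefore computes $\mathbf{R}f'_!K^\bu\cong \mathbf{R}f'_!j_!\mathscr{F}=\mathbf{R}f_!\mathscr{F}$, and the hypercohomology spectral sequence of the complex $K^\bu$ for the functor $\mathbf{R}f'_!$ takes the form
\[ E_1^{-n,q}=\mathbf{R}^qf'_!(K^{-n})=\mathbf{R}^qf_{n!}\mathscr{F}|_{U_n} \Longrightarrow \mathbf{R}^{-n+q}f_!\mathscr{F}. \]
To reach the stated $E_1$-page I would then use the decomposition $U_n=\coprod_m U_{n,m}$ into opens of $X$ together with the fact that $\mathbf{R}^qf_{n!}$ commutes with direct sums (Corollary \ref{cor: colimits}), which rewrites $\mathbf{R}^qf_{n!}\mathscr{F}|_{U_n}=\bigoplus_m \mathbf{R}^qf_{n,m!}\mathscr{F}|_{U_{n,m}}$.

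The one genuinely non-formal point, and the step I expect to be the main obstacle, is convergence. Since $K^\bu$ is unbounded in the negative direction, the associated second-quadrant spectral sequence need not converge in general; convergence hinges on the cohomological amplitude of $\mathbf{R}f_{n!}$ being bounded independently of $n$. This is precisely what Proposition \ref{prop: cd prop} supplies: each $U_{n,m}$ is open in $X$, so the fibres of $f_{n,m}$ are open in the fibres of $f$ and hence $\dim f_{n,m}\leq \dim f$, whence the cohomological amplitude of each $\mathbf{R}f_{n!}$ lies in $[0,\dim f]$. This uniform bound confines the nonzero $E_1$-terms to the horizontal strip $0\leq q\leq \dim f$; for each fixed total degree only finitely many terms and differentials are then relevant, so the second-quadrant spectral sequence converges, completing the argument.
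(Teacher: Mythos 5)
Your proposal is correct and follows essentially the same route as the paper: compactify $f$, use the hypercover to resolve $j_!\mathscr{F}$, identify $\mathbf{R}f_{n!}=\mathbf{R}f'_!\circ j_{n!}$ via Theorem \ref{theo: compose}, take the hypercohomology spectral sequence, decompose the $E_1$-terms using Corollary \ref{cor: colimits}, and secure convergence of the second-quadrant spectral sequence from the uniform cohomological amplitude bound of Proposition \ref{prop: cd prop}. Your explicit justification of why the amplitude bound is uniform in $n$ (openness of the $U_{n,m}$ in $X$) is a welcome elaboration of a point the paper leaves implicit.
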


\subsection{Module structures on proper pushforwards} 

Let $f:X\rightarrow Y$ be a partially proper morphism of germs, and suppose that we have sheaves of rings $\mathcal{A}_X$ and $\mathcal{A}_Y$ on $X$ and $Y$ respectively, together with a morphism $\mathcal{A}_Y\rightarrow f_*\mathcal{A}_X$ making $f$ into a morphism of ringed spaces. The principal example for us will of course be the structure sheaves $\mathcal{A}_X=\mathcal{O}_X$ and $\mathcal{A}_Y=\mathcal{O}_Y$.

\begin{lemma} If $\mathscr{I}$ is an injective $\mathcal{A}_X$-module, then $\mathscr{I}$ is $f_!$-acyclic.
\end{lemma}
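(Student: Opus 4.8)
The plan is to reduce the statement to the $f_!$-acyclicity of \emph{flasque} sheaves, and then to deduce the latter from the machinery of Lemma \ref{lemma: push key}. The first step is the standard observation that an injective $\mathcal{A}_X$-module $\mathscr{I}$ is automatically flasque as an abelian sheaf. Indeed, for an open immersion $j:U\hookrightarrow X$ one has a natural identification $\mathrm{Hom}_{\mathcal{A}_X}(j_!(\mathcal{A}_X|_U),\mathscr{I})\cong \Gamma(U,\mathscr{I})$; given an inclusion $U\subset V$ of opens, the induced monomorphism $j_!(\mathcal{A}_X|_U)\hookrightarrow j'_!(\mathcal{A}_X|_V)$ together with the injectivity of $\mathscr{I}$ shows that $\Gamma(V,\mathscr{I})\to\Gamma(U,\mathscr{I})$ is surjective. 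It therefore suffices to prove that every flasque sheaf $\mathscr{I}$ on $X$ satisfies $\mathbf{R}^qf_!\mathscr{I}=0$ for all $q>0$.

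Since $f_!$-acyclicity may be checked locally on $Y$, I would assume $Y$ coherent and invoke Lemma \ref{lemma: push key}(2) to write $\mathbf{R}^qf_!\mathscr{I}\cong \mathrm{colim}_{i\in I}\mathbf{R}^qf_{i*}(j_{i!}\mathscr{I}|_{U_i})$, where $f_i:\overline{U}_i\to Y$ is the restriction of $f$ to the closure of the overconvergent open $U_i$. As $U_i$ has quasi-compact closure and $f$ is partially proper, each $f_i$ is proper, so it is enough to show that $\mathbf{R}^qf_{i*}(j_{i!}\mathscr{I}|_{U_i})=0$ for $q>0$. By the proper base change theorem (Theorem \ref{theo: pbc1}) the stalk of this sheaf at $y\in Y$ is $H^q(f_i^{-1}(y),(j_{i!}\mathscr{I}|_{U_i})|_{f_i^{-1}(y)})$. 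Now $(j_{i!}\mathscr{I}|_{U_i})|_{f_i^{-1}(y)}$ is the extension by zero of the flasque sheaf $\mathscr{I}|_{U_i\cap f_i^{-1}(y)}$, and $U_i\cap f_i^{-1}(y)$ is an overconvergent open subspace of the quasi-separated germ $f_i^{-1}(y)$ with quasi-compact closure. Lemma \ref{lemma: push key 2}, applied on the fibre $f_i^{-1}(y)$, then yields the required vanishing, and passing to the colimit finishes the argument.

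The step requiring the most care is this last fibrewise reduction: one must check that restricting $j_{i!}\mathscr{I}|_{U_i}$ to $f_i^{-1}(y)$ produces precisely the data to which Lemma \ref{lemma: push key 2} applies. The overconvergence of $U_i\cap f_i^{-1}(y)$ inside $f_i^{-1}(y)$ follows because a generalisation within the fibre is a generalisation in $X$, hence lies in the overconvergent set $U_i$; the quasi-compactness of its closure follows from that of $\overline{U}_i$ together with the quasi-compactness of the fibre of the proper map $f_i$; and the preservation of flasqueness under restriction to the fibre is standard. Everything else is formal, and in fact the vanishing $\mathbf{R}^qf_{i*}(j_{i!}\mathscr{I}|_{U_i})=0$ is exactly the effaceability input already exploited in the proof of Lemma \ref{lemma: push key}.
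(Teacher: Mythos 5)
Your reduction to flasque sheaves and your use of Lemma \ref{lemma: push key} to write $\mathbf{R}^qf_!\mathscr{I}$ as $\mathrm{colim}_{i}\mathbf{R}^qf_{i*}(j_{i!}\mathscr{I}|_{U_i})$ are fine, but the fibrewise step contains a genuine gap. The claim that ``the preservation of flasqueness under restriction to the fibre is standard'' is false: restriction of a flasque sheaf to an \emph{open} subset is flasque, but restriction to a closed subset is not, and the closed fibre $f_i^{-1}(y)$ over a non-maximal point $y$ is exactly the kind of closed subset for which this fails. It is not overconvergent (a generalisation of a point of $f^{-1}(y)$ maps to a generalisation of $y$, which need not be $y$), so none of the colimit formulas (Proposition \ref{prop: cohom limits}, Lemma \ref{lemma: oc closed}) that would identify sections of $\mathscr{I}|_{f_i^{-1}(y)}$ with colimits of sections of $\mathscr{I}$ over neighbourhoods are available, and these are precisely what make restrictions of flasque sheaves flasque. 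Without that input, Lemma \ref{lemma: push key 2} simply does not apply on the fibre. (There is also a small slip of direction: overconvergent opens are closed under \emph{specialisation}, not generalisation, so the correct check is that a specialisation within the fibre of a point of $U_i$ stays in $U_i$.)

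The detour through Theorem \ref{theo: pbc1} is in any case unnecessary. To prove $\mathbf{R}^qf_{i*}(j_{i!}\mathscr{I}|_{U_i})=0$ for $q>0$ and $\mathscr{I}$ flasque, compute the sections of the presheaf $V\mapsto H^q(f_i^{-1}(V),j_{i!}\mathscr{I}|_{U_i\cap f^{-1}(V)})$ over opens $V\subset Y$ and apply Lemma \ref{lemma: push key 2} to the quasi-separated germ $f^{-1}(V)$ with the overconvergent open $U_i\cap f^{-1}(V)$; here $\mathscr{I}|_{f^{-1}(V)}$ is genuinely flasque because $f^{-1}(V)$ is open in $X$. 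With that repair your argument closes, but note that it is a much heavier route than the paper's: the paper simply checks by hand that $f_!$ carries a short exact sequence $0\to\mathscr{F}\to\mathscr{G}\to\mathscr{H}\to0$ with $\mathscr{F},\mathscr{G}$ flasque to a short exact sequence, by lifting a section $s$ of $\mathscr{H}$ with proper support to a global section of $\mathscr{G}$ and then correcting it by a section of $\mathscr{F}$ so that its support shrinks into $\mathrm{supp}(s)$; this uses no base change theorem and no compactness machinery at all.
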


\begin{proof}
Since injective $\mathcal{A}_X$-modules are flasque, this follows from Corollary \ref{cor: flasque pp acyclic}.
\end{proof}

\begin{corollary}[Projection formula] For any locally free $\mathcal{A}_{Y}$-module $\mathscr{E}$ of finite rank, there exists an isomorphism
\[ \mathscr{E}\otimes_{\mathcal{A}_Y}  \mathbf{R}f_! \mathscr{F} \cong \mathbf{R}f_! (f^*\mathscr{E}\otimes_{\mathcal{A}_X} \mathscr{F})  \]
in ${\bf D}^+(\mathcal{A}_Y)$.
\end{corollary}

\begin{remark} Here $f^*\mathscr{E}$ denotes module pullback $f^{-1}\mathscr{E}\otimes_{f^{-1}\mathcal{A}_Y} \mathcal{A}_Y$. Since $\mathscr{E}$ is locally free, the functors
\begin{align*}
\mathscr{E}\otimes_{\mathcal{A}_Y}(-) : &{\bf D}^+(\mathcal{A}_Y) \rightarrow {\bf D}^+(\mathcal{A}_Y) \\
f^*\mathscr{E}\otimes_{\mathcal{A}_X}(-) : &{\bf D}^+(\mathcal{A}_X) \rightarrow {\bf D}^+(\mathcal{A}_X)
\end{align*}
are well-defined.
\end{remark}

\begin{proof}
Let $\mathscr{I}^\bullet$ be an injective resolution of $\mathscr{F}$ as an $\mathcal{A}_X$-module. Then $f^*\mathscr{E}\otimes_{\mathcal{A}_X} \mathscr{I}^\bullet$ is an injective resolution of $f^*\mathscr{E}\otimes_{\mathcal{A}_X} \mathscr{F}$. We can therefore reduce to the case $\mathscr{F}$ injective, and we must produce a canonical isomorphism
\[ f_! (f^*\mathscr{E}\otimes_{\mathcal{A}_X} \mathscr{F}) \cong  \mathscr{E}\otimes_{\mathcal{A}_Y}  f_! \mathscr{F}.  \]
Note that both sides embed naturally into $ \mathscr{E}\otimes_{\mathcal{A}_Y} f_*\mathscr{F} \cong f_*(f^*\mathscr{E}\otimes_{\mathcal{A}_X} \mathscr{F})$, to check that the images are equal we may argue locally, and hence assume that $\mathscr{E}\cong \mathcal{A}_{Y}^{\oplus n}$. Since all functors in sight ($f^*,f_!,f_*,\otimes$) commute with finite direct sums, we may therefore reduce to the trivial case $\mathscr{E} = \mathcal{A}_{Y}$.
\end{proof}

\subsection{Comparison with separated quotients}

The next crucial result we will need is a comparison between $\mathbf{R}f_!$ as we have defined it here, and the classical notion of proper pushforwards for maps between locally compact topological spaces. To prepare for this, we note the following property of separated quotients.

\begin{proposition} \label{prop: sep proper} Let $X$ be a taut germ. Then $[X]$ is Hausdorff and locally compact, and the separation map $\mathrm{sep}_X$ is topologically proper.
\end{proposition}

\begin{proof}
This is \cite[Chapter 0, Proposition 2.5.5, Theorem 2.5.7, Corollary 2.5.9]{FK18}.
\end{proof}

Now, let $f:X\rightarrow Y$ be a partially proper morphism of germs. Then we have a diagram
\[ \xymatrix{ X \ar[r]^{\mathrm{sep}_X} \ar[d]_f & [X] \ar[d]^{[f]} \\ Y \ar[r]^{\mathrm{sep}_Y} & [Y] } \]
relating the separated quotients of $X$ and $Y$. If $Y$ is taut, then so is $X$ by \cite[Lemma 5.1.4 ii)]{Hub96}, and hence Proposition \ref{prop: sep proper} applies to both $X$ and $Y$. In this situation, we may consider the usual functor $[f]_!$ of sections whose support is topologically proper over $[Y]$, together with its total derived functor $\mathbf{R}[f]_!:{\bf D}^+([X])\rightarrow {\bf D}^+([Y])$. 

\begin{lemma} \label{lemma: support qc proper} Let $\mathscr{F}$ be a sheaf on $X$, $U\subset Y$ an overconvergent open subset, and let $s\in \Gamma(f^{-1}(U),\mathscr{F})$ a section. Write $[s]$ for $s$ considered as a section of $\Gamma([f]^{-1}([U]),\mathrm{sep}_{X*}\mathscr{F})$. Then $\mathrm{supp}(s)\rightarrow U$ is proper if and only if $\mathrm{supp}([s])\rightarrow [U]$ is topologically proper. 
\end{lemma}

\begin{remark} The hypothesis that $U$ is overconvergent is to ensure that $[U]$ is an open subset of $[Y]$.
\end{remark}

\begin{proof}
Note that since $f$ is partially proper, $\mathrm{supp}(s)\rightarrow U$ is proper if and only if it is quasi-compact. We first claim that $\mathrm{supp}([s])=\mathrm{sep}_X(\mathrm{supp}(s))$, which can be proved using \cite[Lemma 8.1.5]{Hub96}. Indeed, this shows that for any $x\in [X]$, we have $(\mathrm{sep}_{X*}\mathscr{F})_x \cong H^0(\overline{\{x\}},\mathscr{F}|_{\overline{\{x\}}})$, where the closure is taken inside $X$ (see also the proof of \cite[Proposition 8.1.4]{Hub96}). In particular, we see that $[s]_x=0$ if and only if $s|_{\overline{\{x\}}}=0$ if and only iff $s_y=0$ for all $y\in \overline{\{x\}}=\mathrm{sep}_X^{-1}(x)$. This implies that $\mathrm{supp}([s])=\mathrm{sep}_X(\mathrm{supp}(s))$ as claimed. 

Now, suppose that $\mathrm{supp}(s) \rightarrow U$ is quasi-compact, and that $K\subset [U]$ is quasi-compact. Since $[U]$ is Hausdorff, $K$ is closed, and hence the inverse image $\mathrm{sep}_Y^{-1}(K)$ is a closed, quasi-compact subset of $U$. 

In particular, $\mathrm{sep}_Y^{-1}(K)$ is contained inside a quasi-compact open subset $V\subset U$, whence the preimage $\mathrm{supp}(s)\cap f^{-1}(\mathrm{sep}_Y^{-1}(K))$ is a closed subset of the quasi-compact set $\mathrm{supp}(s)\cap f^{-1}(V)$, and is thus quasi-compact. Hence 
\[ \mathrm{supp}(s) \cap \mathrm{sep}_{X}^{-1}([f]^{-1}(K))=\mathrm{supp}(s) \cap f^{-1}(\mathrm{sep}_Y^{-1}(K))  \]
is quasi-compact, and so
\[ \mathrm{sep}_{X}(\mathrm{supp}(s) \cap \mathrm{sep}_{X}^{-1}([f]^{-1}(K)))=\mathrm{sep}_{X}(\mathrm{supp}(s)) \cap [f]^{-1}(K)=\mathrm{supp}([s]) \cap [f]^{-1}(K) \]
is quasi-compact. In other words, preimages of quasi-compact subsets under $\mathrm{supp}([s])\rightarrow [U]$ are quasi-compact. Since $[U]$ is locally compact, and $\mathrm{supp}([s])$ is Hausdorff, it follows that $\mathrm{supp}([s])\rightarrow [U]$ is topologically proper.

On the other hand, suppose that $\mathrm{supp}([s]) \rightarrow [U]$ is topologically proper, and let $V\subset U$ be a quasi-compact open subset. Then $[V]\subset [Y]$ is quasi-compact, and closed in $[Y]$. Thus $\overline{V}:=\mathrm{sep}_Y^{-1}([V])$ is quasi-compact and closed in $Y$ (and is in fact the closure of $V$ in $Y$, although we won't need that here). We know that $[f]^{-1}([V]) \cap \mathrm{supp}([s])$ is quasi-compact, and since $\mathrm{sep}_X$ is topologically proper, we see that 
\[ \mathrm{supp}(s)\cap f^{-1}(\overline{V})\subset \mathrm{sep}_{X}^{-1}([f]^{-1}([V]) \cap \mathrm{supp}([s]))\]
is contained in a quasi-compact closed subset of $f^{-1}(\overline{V})$, and is thus quasi-compact. 

Now, since $Y$ is quasi-separated and $V$ is quasi-compact, the inclusion $V\rightarrow Y$ is quasi-compact, and hence by \cite[Chapter 0, Corollary 2.1.6]{FK18} the morphism $V\rightarrow \overline{V}$ is quasi-compact. Thus by \cite[Lemma 1.10.7 c)]{Hub96} the morphism $f^{-1}(V)\cap \mathrm{supp}(s)\rightarrow f^{-1}(\overline{V})\cap \mathrm{supp}(s)$ is quasi-compact and hence $f^{-1}(V)\cap \mathrm{supp}(s)$ is quasi-compact. Since $V$ was arbitrary, we conclude that $\mathrm{supp}(s)\rightarrow U$ is quasi-compact as required.
\end{proof}

\begin{corollary} \label{cor: f! sep} Let $Y$ be a taut germ, and $f:X\rightarrow Y$ a partially proper morphism. Then there is a canonical isomorphism
\[ \mathbf{R}\mathrm{sep}_{Y*}\circ \mathbf{R}f_! \cong \mathbf{R}[f]_!\circ \mathbf{R}\mathrm{sep}_{X*}\]
of functors ${\bf D}^+(X)\rightarrow {\bf D}^+([Y])$.
\end{corollary}

\begin{proof}
Note that Lemma \ref{lemma: support qc proper} above gives rise to an equality
\[ \mathrm{sep}_{Y*}\circ f_! = [f]_!\circ \mathrm{sep}_{X*} \]
of subfunctors of $\mathrm{sep}_{Y*}\circ f_* = [f]_*\circ \mathrm{sep}_{X*}$, so we need to show that
\[ \mathbf{R}(\mathrm{sep}_{Y*}\circ f_!) \cong \mathbf{R}\mathrm{sep}_{Y*}\circ \mathbf{R}f_!\]
and
\[ \mathbf{R}([f]_!\circ \mathrm{sep}_{X*})\cong  \mathbf{R}[f]_!\circ \mathbf{R}\mathrm{sep}_{X*}.\]
The first follows from the proof of Corollary \ref{cor: comp part prop}, in particular the fact that $f_!$ sends injective sheaves to flasque sheaves. The second follows from the fact that $\mathrm{sep}_{X*}$ preserves injectives.
\end{proof}

\begin{corollary}  \label{cor: coh dim sep quot}Let $Y$ be a local germ, and $f\colon X\rightarrow Y$ a partially proper morphism with $\dim f=d$. Then
\[ {\rm H}^q_c([X],\mathscr{F})=0\]
for any sheaf $\mathscr{F}$ on $[X]$, and any $q>d$.
\end{corollary}

\begin{proof}
Replacing $Y$ by its maximal point doesn't change $[X]$, so we may assume that $Y$ consists of a single point. As in \cite[Proof of Proposition 8.1.4 i)]{Hub96} we see that $\mathscr{F}\isomto \mathbf{R}\mathrm{sep}_{X*}\mathrm{sep}_X^{-1}\mathscr{F}$, it therefore suffices to show that 
\[ {\rm H}^q_c(X/Y,\mathrm{sep}_X^{-1}\mathscr{F})=0 \]
for $q>d$. But now, applying Lemma \ref{lemma: push key}, this reduces to Theorem \ref{theo: cd ss}.
\end{proof}

\section{Kiehl partial properness and cohomological dimension}\label{sec: kpp}

In this section we will prove a result on the cohomological dimension of coherent sheaves for certain partially proper morphisms. The condition we require is in fact the original definition of partial properness given by Kiehl \cite{Kie67a}. Any such morphism has to be locally of finite type, which excludes many examples of partially proper morphisms (in particular, Huber's universal compactifications, constructed in \cite{Hub96}, are generally not locally of finite type). For morphisms locally of finite type, partial properness in the sense of Kiehl coincides with partial properness in many cases of interest, although it is still open whether or not the two coincide in general.

\subsection{Polydiscs and affine spaces over germs}

To begin with, we recall the definitions of polydiscs and affine spaces over a germ $Y$. To begin with, suppose that $Y=\spa{R,R^+}$ is a Tate affinoid adic space, then we have the usual definition
\[ \mathbb{D}^d_Y(0;1) = \spa{R\tate{\bm{z}},R^+\tate{\bm{z}}}\] 
of the closed unit polydisc over $Y$, using multi-index notation $\bm{z}=(z_1,\ldots,z_d)$. If $\varpi\in R$ is a quasi-uniformiser, and $q\in \Q_{\geq 0}$, we write $q=\frac{a}{b}$ in lowest terms, and define
\[ \mathbb{D}^d_Y(0;\norm{\varpi}^q) :=  \spa{\frac{R\tate{\bm{z},\bm{t}}}{(\varpi^a\bm{t}-\bm{z}^b)},\frac{R^+\tate{\bm{z},\bm{t}}}{(\varpi^a\bm{t}-\bm{z}^b)}}  \]
to be the ``closed disc of radius $\norm{\varpi}^{q}$''. Similarly, if $q<0$, we set $n=\lfloor q\rfloor$, and write $q-n=\frac{a}{b}$ in  lowest terms, and set
\[ \mathbb{D}^d_Y(0;\norm{\varpi}^q) :=  \spa{\frac{R\tate{\varpi^{-n}\bm{z},\bm{t}}}{(\varpi^a\bm{t}-(\varpi^{-n}\bm{z})^b)},\frac{R^+\tate{\varpi^{-n}\bm{z},\bm{t}}}{(\varpi^a\bm{t}-(\varpi^{-n}\bm{z})^b)}}.  \]
We then define
\[ \D^d_Y(0;1^-):=\bigcup_{n\geq 1} \D^d_Y(0;\norm{\varpi}^{\frac{1}{n}}),\;\;\;\;\A_Y^{d,\an}:=\bigcup_{n\geq 1} \D^d_Y(0;\norm{\varpi}^{-n}) \]
as well as analogous open discs
\[ \D^d_Y(0;\norm{\varpi}^{q-}):=\bigcup_{q' > q} \D^d_Y(0;\norm{\varpi}^{q'}) \]
``of radius $\norm{\varpi}^q$''. 

More generally, if $Y$ is an adic space admitting an element $\varpi\in \Gamma(Y,\mathcal{O}_Y)$ which is a quasi-uniformiser locally around every point $y\in Y$, then we can define any of 
\[ \D^d_Y(0;1),\;\;\mathbb{D}^d_Y(0;\norm{\varpi}^q),\;\;\D^d_Y(0;1^-),\;\;\A_Y^{d,\an},\;\;\D^d_Y(0;\norm{\varpi}^{q-})  \]
 by gluing. If $Y$ is a germ admitting a similar global quasi-uniformiser $\varpi\in\Gamma(Y,\mathcal{O}_Y)$, then, locally on some ambient adic space $\bm{Y}$, we can define analogous spaces over $Y$ by pulling back from those defined over $\bm{Y}$, for example, $\mathbb{D}^d_Y(0;\norm{\varpi}^q)$ is defined by the Cartesian diagram
\[ \xymatrix{ \mathbb{D}^d_Y(0;\norm{\varpi}^q) \ar[r] \ar[d] & \mathbb{D}^d_{\bm{Y}}(0;\norm{\varpi}^q) \ar[d] \\ Y\ar[r] & \bm{Y}. } \]
Finally, the definitions of $\D^d_Y(0;1), \D^d_Y(0;1^-)$ and $\A_Y^{d,\an}$ are independent of the choice of quasi-uniformiser, and hence the definition globalises to give $\D^d_Y(0;1), \D^d_Y(0;1^-)$ and $\A_Y^{d,\an}$ over an arbitrary germ $Y$. It is straightforward to check that if $Y'\rightarrow Y$ is any morphism of germs, then there is a natural Cartesian diagram
\[ \xymatrix{ \mathbb{D}^d_{Y'}(0;1) \ar[r] \ar[d] & \mathbb{D}^d_Y(0;1) \ar[d] \\ Y'\ar[r] & Y, } \]
as well as the obvious analogues for $\D^d_Y(0;1^-)$ and $\A_Y^{d,\an}$.

\begin{remark} Alternative constructions of $\D^d_Y(0;1),\A^{d,\an}_Y$ and $\D^d_Y(0;1^-)$, described in \cite{LS17}, are as fibre products
\begin{align*}
\D^d_Y(0;1) &= Y\times_{\spa{\Z,\Z}}\spa{\Z[\bm{z}],\Z[\bm{z}]} \\
\A^{d,\an}_Y &= Y\times_{\spa{\Z,\Z}}\spa{\Z[\bm{z}],\Z} \\
\D^d_Y(0;1^-) &= Y\times_{\spa{\Z,\Z}}\spa{\Z\pow{\bm{z}},\Z\pow{\bm{z}}}
\end{align*} 
in the category of (not necessarily analytic) adic spaces. Here $\Z$ and $\Z[\bm{z}]$ are given the discrete topology, and $\Z\pow{\bm{z}}$ the $\bm{z}$-adic topology. We will not use these constructions in this article. 
\end{remark}

\begin{lemma} Let $Y=\spa{R,R^+}$ be a Tate affinoid adic space, and $\varpi$ a quasi-uniformiser on $Y$. Then we have the following identifications of sets of sections:
\begin{align*}
\D^1_Y(0;1)(Y) &= R^+, \\
\D^1_Y(0;1^-)(Y) &= \left\{r\in R \mid \exists n\geq1\text{ s.t. } r^n\in \varpi R^+ \right\}=R^{\circ\circ}, \\
\A^{1,\an}_Y(Y) &= \left\{r\in R \mid \exists n\geq1\text{ s.t. } r\in \varpi^{-n}R^+ \right\} = R.
\end{align*}
\end{lemma}

\begin{proof}
Straightforward. 
\end{proof}

To define Kiehl's version of partial properness, we use the following result. 

\begin{proposition} \label{prop: kiehl pp} Let $f:X\rightarrow Y$ be a morphism of germs. The following conditions are equivalent. 
\begin{enumerate}
\item Locally on $X$ and $Y$, there exists a quasi-uniformiser $\varpi$ on $Y$, open covers $\{V_i\}_{i\in I}$ and $\{U_i\}_{i\in I}$ of $X$, integers $N_i\geq 1$, closed immersions $U_i\hookrightarrow \D_Y^{N_i}(0;1)$ over $Y$, and integers $m_i\geq 1$, such that $V_i\subset U_i\cap \D_Y^{N_i}(0;\norm{\varpi}^{\frac{1}{m_i}})$.
\item Locally on $X$ and $Y$, there exist open covers $\{V_i\}_{i\in I}$ and $\{U_i\}_{i\in I}$ of $X$, integers $N_i\geq 1$, and closed immersions $U_i\hookrightarrow \D_Y^{N_i}(0;1)$ over $Y$, such that $V_i\subset U_i\cap \D_Y^{N_i}(0;1^-)$.
\item Locally on $X$ and $Y$, there exists an open cover $\{U_i\}_{i\in I}$ of $X$, integers $N_i\geq 1$, and closed immersions $U_i\hookrightarrow \D_Y^{N_i}(0;1^-)$ over $Y$.
\end{enumerate}
\end{proposition}

\begin{proof}
Clearly we have (1)$\implies$(2) and (2)$\implies$(3), it therefore suffices to show that (3)$\implies$(1). So suppose that we have such a cover $U_i$. Localising on $Y$ we may choose a quasi-uniformiser $\varpi$ defined on an open neighbourhood of $Y$ in its ambient adic space. We define a new cover $\{U_{i,n}\}_{(i,n)\in I\times \N}$ of $X$ by $U_{i,n}:=U_i\cap \D^{N_i}_Y(0;\norm{\varpi}^{\frac{1}{n}})$. Each $U_{i,n}$ admits a closed immersion into $\D^{2N_i}_Y(0;1)$ defined informally by
\[ \bm{z} \in U_{i,n} \subset \D^{N_i}_Y(0;\norm{\varpi}^{\frac{1}{n}}) \mapsto (\bm{z},\varpi^{-1}\bm{z}^n) \in \D^{2N_i}_Y(0;1). \]
We now set $V_{i,n}$ to be $U_{i,n}\cap \D^{2N_i}_Y(0;\norm{\varpi}^{\frac{1}{n-1}})$, and we claim that the $V_{i,n}$ still cover $X$. But $V_{i,n}$ is defined in $U_{i,n}$ by the two equivalent conditions
\begin{align*}
v(\varpi^{-1}\bm{z}^{n-1})\leq 1 \\ 
v(\varpi^{-n}\bm{z}^{n(n-1)})\leq 1.
\end{align*}
Thus $V_{i,n}=U_{i,n-1}$, and so the $V_{i,n}$ do indeed cover $X$ as required.
\end{proof}

\begin{definition} \label{defn: pp Kiehl} We say that $f$ is partially proper in the sense of Kiehl if it is separated, taut, and satisfies the equivalent conditions of Proposition \ref{prop: kiehl pp}.
\end{definition}

\begin{remark} \begin{enumerate}
\item If $f$ is partially proper in the sense of Kiehl, then it is partially proper. 
\item If $f:X\rightarrow Y$ is partially proper in the sense of Kiehl, and $g:Z\rightarrow Y$ is any morphism, then $X\times_YZ \rightarrow Z$ is partially proper in the sense of Kiehl.
\item If $f$ and $g$ are partially proper in the sense of Kiehl, then so is $g\circ f$. If $g\circ f$ and $g$ are partially proper in the sense of Kiehl, then so is $f$. 
\item For any $Y$, the maps $\D^d_Y(0;1^-)\rightarrow Y$ and $\A^d_Y\rightarrow Y$ are partially proper in the sense of Kiehl. 
\item Any closed immersion is partially proper in the sense of Kiehl.
\item If $X$ and $Y$ are quasi-separated adic spaces locally of finite type over a discretely valued, height one affinoid field, then any partially proper map $f:X\rightarrow Y$ is partially proper in the sense of Kiehl \cite[Remark 1.3.19]{Hub96}.
\item Any map which is partially proper in the sense of Kiehl is locally of finite type.
\end{enumerate} 
\end{remark}

The main result of this section is then the following. 

\begin{theorem} \label{theo: coh dim pp smooth}
Let $f:X\rightarrow Y$ be a morphism between finite dimensional adic spaces, partially proper in the sense of Kiehl, and set $d=\dim f$. If $\mathscr{F}$ is a coherent sheaf on $X$, then $\mathbf{R}^qf_!\mathscr{F}=0$ for $q> d$.
\end{theorem}

\subsection{Cohomology of coherent sheaves} 

Before embarking on the proof of Theorem \ref{theo: coh dim pp smooth}, we will need a couple of preliminary results on the cohomology of coherent sheaves on certain kinds of adic spaces. The first is the analogue of Theorems A and B for suitable `quasi-Stein' adic spaces.

\begin{proposition}\label{prop: A and B} Let $Y=\spa{R,R^+}$ be a Tate affinoid adic space, $X$ a closed analytic subspace of either $\D^N_Y(0;1^-)$ or $\A^{N,\an}_Y$, and $\mathscr{F}$ a coherent sheaf on $X$. Then $\mathscr{F}$ is generated by its global sections, and ${\rm H}^q(X,\mathscr{F})=0$ for all $q>0$.
\end{proposition}

\begin{remark} \label{rem: A and B} If $N=0$, i.e $X$ itself is Tate affinoid (and hence quasi-compact), it follows in the usual way that ${\rm H}^0(X,-)$ induces an equivalence of categories between coherent $\mathcal{O}_X$-modules and finitely generated ${\rm H}^0(X,\mathcal{O}_X)$-modules. In general, it seems reasonable to expect an analogue of the theory of `co-admissible modules' from \cite{ST03} to hold, although we did not think seriously about this question.
\end{remark}

\begin{proof}
When $n=0$, i.e. $X$ itself is a Tate affinoid, these claims follow from \cite[Chapter II, Theorems 6.5.7 and A.4.7]{FK18}. In general, we write
\[ X=\bigcup_q X\cap \D^N_Y(0;\norm{\varpi}^q)\]
for increasing $q$, it is then enough to show that
\[ \underset{q}{\mathrm{lim}}^{(1)}\; \Gamma(X\cap \D^N_Y(0;\norm{\varpi}^q),\mathscr{F})=0.\]
To do this, we apply \cite[Remarques 0.13.2.4, Proposition 0.13.2.2]{EGA3.1} and \cite[Chapter II, \S3.5, Theorem 1]{Bou98}. The facts needed to apply these results are the following: 
\begin{itemize}
\item each $\Gamma(X\cap \D^N_Y(0,\norm{\varpi}^{q}),\mathscr{F})$ has a canonical topology as a finitely generated module over the Banach ring $\Gamma(X\cap \D^N_Y(0,\norm{\varpi}^{q}), \mathcal{O}_X)$;
\item this topology is metrisable and complete;
\item each transition map 
\[ \Gamma(X\cap \D^N_Y(0,\norm{\varpi}^{q'}), \mathscr{F}) \rightarrow \Gamma(X\cap \D^N_Y(0,\norm{\varpi}^{q}), \mathscr{F}) \]
for $q'>q$ is uniformly continuous, and has dense image.
\end{itemize}
All of these can be easily verified.
\end{proof}

We will also need a slight generalisation of \cite[Proposition 1.3.6]{Ber93}, giving conditions for the structure sheaf to have vanishing higher direct images along the separation map.

\begin{definition} Let $X$ be a taut adic space. We say that $X$ is very good if every point $x\in X$ admits a Tate open affinoid neighbourhood $U$ such that $\overline{\{x\}}\subset U$.
\end{definition}

Thanks to \cite[Chapter 0, Corollary 2.3.31]{FK18}, this implies that $[U]$ contains an open neighbourhood of $\mathrm{sep}(x)$ in $[X]$.

\begin{proposition} \label{prop: acyclic sep F} $X$ be a very good, taut adic space, $\mathrm{sep}\colon X\rightarrow [X]$ the separation map, and $\mathscr{F}$ a coherent $\mathcal{O}_X$-module. Then $\mathbf{R}^q\mathrm{sep}_{*}\mathscr{F}=0$ for $q>0$.
\end{proposition}

\begin{proof}
Let $x\in [X]$ be a maximal point, and choose a Tate open affinoid $U\subset X$ such that $\overline{\{x\}}\subset U\subset X$. It then follows from \cite[Corollary 0.2.3.31]{FK18} that $x\in \mathrm{int}_X(U)$ lies in the `overconvergent interior' of $U$, in other words, there exists an overconvergent open subset $x\in V\subset U$. Thus $x\in[V]\subset [U]$ is an open neighbourhood of $x\in [X]$ contained in $[U]$. Thus to prove that $\mathbf{R}^q\mathrm{sep}_{*}\mathscr{F}$ vanishes at $x$, we may replace $X$ by $U$, in other words we can assume that $X=\spa{R,R^+}$ is Tate affinoid, with $\varpi\in R$ a quasi-uniformiser.

In this case, by \cite[Chapter II, Proposition C.4.34]{FK18}, we can identify $[X]=\mathscr{M}(R)$ with the Berkovich spectrum of $R$. We now choose some $0<\rho<1$, and for every maximal point $x\in [X]$ we normalise $v_x:R\rightarrow \R_{\geq 0}$ so that $v_x(\varpi)=\rho$. Then, essentially by definition, $\mathscr{M}(R)$ has a basis of open subsets of the form
\[ U(f_1,\ldots,f_n;\lambda_1,\ldots,\lambda_n)=\left\{ x\in [X] \mid v_x(f_i)<\lambda_i \;\forall i \right\} \]
for $f_i\in R$ and $\lambda_i\in \R_{>0}$. Of course, it suffices to take $\lambda_i$ ranging over the dense subgroup $\rho^{\Q}\subset \R_{>0}$, and for a maximal point $x$, the condition $v_x(f_i)< \rho^{\frac{a}{b}}$ is equivalent to $v_x(\varpi^{-a}f^b)<1$. Thus $\mathscr{M}(R)$ in fact has a basis of open subsets of the form
\[ U(f_1,\ldots,f_n)=\left\{ x\in [X] \mid v_x(f_i)<1 \;\forall i \right\} \]
for $f_i\in R$. The preimage of $U_{f_1,\ldots,f_n}$ in $\spa{R,R^+}$ therefore admits a closed immersion in the open unit polydisc $\D^n_X(0;1^-)$ over $X$, defined by
\[ x\in \mathrm{sep}^{-1}(U(f_1,\ldots,f_n))\mapsto (f_1(x),\ldots,f_n(x)).\]
It now follows from Proposition \ref{prop: A and B} that ${\rm H}^q(\mathrm{sep}^{-1}(U_{f_1,\ldots,f_n}),\mathscr{F})=0$ for $q>0$, which completes the proof.
\end{proof}

\subsection{Proof of Theorem \ref{theo: coh dim pp smooth}}

We now return to the proof of Theorem \ref{theo: coh dim pp smooth}, and there are two immediate reductions that we can make. First of all, we can assume that the base $Y$ is Tate affinoid, and secondly we can assume (by Corollary \ref{cor: MV ss}) that $X$ admits a closed immersion into some open unit polydisc $\D^N_Y(0;1^-)$. 

Moreover, using Proposition \ref{prop: kiehl pp} we can assume that $X=\D^N(0;1^-)\cap Z$ for some closed immersion $Z \hookrightarrow  \D^N_Y(0;1)$, and that $\mathscr{F}$ extends to $Z$. This allows us to make one further reduction.

\begin{lemma} In proving Theorem \ref{theo: coh dim pp smooth}, we may assume that $\mathscr{F}=\mathcal{O}_X$.  
\end{lemma} 

\begin{proof}
Suppose that we know $\mathbf{R}^qf_!\mathcal{O}_X=0$ for all $q>d$. Since $\mathscr{F}$ extends to $Z$, and $Z$ is affionid, it follows from Proposition \ref{prop: A and B} that there exists an exact sequence
\[ 0 \rightarrow \mathscr{F}_1 \rightarrow \mathcal{O}_X^{\oplus m} \rightarrow \mathscr{F}\rightarrow 0 \]
for some $m\geq 0$ and some coherent sheaf $\mathscr{F}_1$ extending to $Z$. We therefore deduce that $\mathbf{R}^qf_!\mathscr{F}\isomto \mathbf{R}^{q+1}f_!\mathscr{F}_1$ for all $q>d$. Repeating the argument, we find a coherent sheaf $\mathscr{F}_m$, extending to $Z$, such that $\mathbf{R}^qf_!\mathscr{F}\isomto \mathbf{R}^{q+m}f_!\mathscr{F}_m$ for all $q>d$. For $m$ large enough we have $\mathbf{R}^{q+m}f_!\mathscr{F}_m=0$ by Proposition \ref{prop: cd prop}, and hence $\mathbf{R}^qf_!\mathscr{F}=0$ as required. 
\end{proof}

Now, thanks to Corollary \ref{cor: prop push supp weak}, we have, for any $y\in Y$, an identification
\[ (\mathbf{R}^qf_!\mathcal{O}_X )_y \isomto {\rm H}^q_c( X_{(y)}/G(y),\mathcal{O}_X|_{X_{(y)}}). \]
If we let $\mathrm{sep}_{X_{(y)}}:X_{(y)}\rightarrow [X_{(y)}]$ denote the separation map, then Corollary \ref{cor: f! sep} gives
\[ {\rm H}^q_c( X_{(y)}/G(y),\mathcal{O}_X|_{X_{(y)}}) \isomto {\rm H}^q_c([X_{(y)}],\mathbf{R}\mathrm{sep}_{X_{(y)}*}(\mathcal{O}_X|_{X_{(y)}})).\]
Now applying Corollary \ref{cor: coh dim sep quot}, it suffices to show that $\mathbf{R}^q\mathrm{sep}_{X_{(y)}*}(\mathcal{O}_X|_{X_{(y)}})=0$ for $q>0$. Let $y\in U\subset Y$ be a Tate open affinoid neighbourhood of $y$, with preimage $f^{-1}(U)\subset X$ and separation map $\mathrm{sep}_{f^{-1}(U)}:f^{-1}(U)\rightarrow [f^{-1}(U)]$. 

\begin{lemma} The adic space $f^{-1}(U)$ is taut and  very good.
\end{lemma}

\begin{proof} Since $f^{-1}(U)$ is partially proper over an affinoid, it is taut. To prove that it is very good, we note that $U$ is Tate affinoid, so we may choose a quasi-uniformiser $\varpi$. Then $f^{-1}(U)$ is covered by the affinoid spaces $f^{-1}(U)\cap \D^N_U(0;\norm{\varpi}^{\frac{1}{n}})$ for $n\geq 1$. Now
\[ f^{-1}(U)\cap \D^N_U(0;\norm{\varpi}^{\frac{1}{n}}) \subset f^{-1}(U)\cap \D^N_U(0;\norm{\varpi}^{\frac{1}{n+1}-}) \subset f^{-1}(U)\cap \D^N_U(0;\norm{\varpi}^{\frac{1}{n+1}})\]
and each $f^{-1}(U)\cap \D^N_U(0;\norm{\varpi}^{\frac{1}{n+1}-})$ is an overconvergent open subset  of $f^{-1}(U)$. Thus, for every point $x\in f^{-1}(U)$, there is some $n$ such that
\[ \overline{\{x\}}\subset f^{-1}(U)\cap \D^N_U(0;\norm{\varpi}^{\frac{1}{n}-}) \subset f^{-1}(U)\cap \D^N_U(0;\norm{\varpi}^{\frac{1}{n}}), \]
and $f^{-1}(U)\cap \D^N_U(0;\norm{\varpi}^{\frac{1}{n}})$ is a Tate affinoid, since $U$ is.
\end{proof}

Thus Proposition \ref{prop: acyclic sep F} tells us that $\mathbf{R}^q\mathrm{sep}_{f^{-1}(U)*}(\mathcal{O}_X|_{f^{-1}(U)})=0$ for $q>0$, and Theorem \ref{theo: coh dim pp smooth} reduces to the following result. 

\begin{proposition} The natural map
\[ \mathrm{colim}_{y\in U\subset Y} \mathbf{R}\mathrm{sep}_{f^{-1}(U)*}(\mathcal{O}_X|_{f^{-1}(U)})|_{[X_{(y)}]} \rightarrow \mathbf{R}\mathrm{sep}_{X_{(y)}*}(\mathcal{O}_X|_{X_{(y)}}) \]
is an isomorphism. 
\end{proposition}

\begin{proof}
We compute the stalks on both sides at an arbitrary point $x\in [X_{(y)}]$. Note that any such point is a maximal point of $X$ (not just of $X_{(y)}$), and we see that $\overline{\{x\}}\cap X_{(y)}$ is the closure of $\{x\}$ inside $X_{(y)}$. Similarly, for any Tate open affinoid neighbouerhood $y\in U\subset Y$ as above, $\overline{\{x\}}\cap f^{-1}(U)$ is the closure of $\{x\}$ inside $f^{-1}(U)$. Then thanks to \cite[Lemma 8.1.5]{Hub96} and Proposition \ref{prop: cohom limits} (see also the proof of \cite[Lemma 8.1.4]{Hub96}) we have
\begin{align*}
\mathrm{colim}_{y\in U\subset Y} \mathbf{R}\mathrm{sep}_{f^{-1}(U)*}(\mathcal{O}_X|_{f^{-1}(U)}))_x &=  \mathrm{colim}_{y\in U\subset Y} \mathbf{R}\Gamma(\overline{\{x\}}\cap f^{-1}(U),\mathcal{O}_X)\\
&= \mathbf{R}\Gamma(\overline{\{x\}}\cap \bigcap_{y\in U\subset Y }f^{-1}(U),\mathcal{O}_X)\\
&=  \mathbf{R}\Gamma(\overline{\{x\}} \cap X_{(y)},\mathcal{O}_X) \\
&= \mathbf{R}\mathrm{sep}_{X_{(y)}*}(\mathcal{O}_X|_{X_{(y)}})_x
\end{align*}
as required.
\end{proof}

\subsection{The case of overconvergent germs}

We do not know whether Theorem \ref{theo: cd ss} holds if $Y$ is replaced by an arbitrary germ. We do at least have the following special case.

\begin{corollary} \label{cor: coh dim omega germs}Let $f:X\rightarrow Y$ be a morphism betweem finite dimensional germs, partially proper in the sense of Kiehl, and smooth of relative dimension $d$. Let $\mathscr{F}$ be a coherent $\mathcal{O}_X$-module which extends to a coherent sheaf on some ambient adic space for $X$. Then $\mathbf{R}^qf_!\mathscr{F}=0$ for all $q > d$.
\end{corollary}

\begin{remark} \begin{enumerate}
\item  Recall that a germ is overconvergent if it is stable under generalisation inside its ambient adic space. 
\item It is possible that the hypothesis that $\mathscr{F}$ extends to some neighbourhood of $X$ is automatically satisfied. This will certainly be the case in the situation of Example \ref{exa: germ}(\ref{num: exa germ 2}).
\end{enumerate}
\end{remark}

\begin{proof}
Choose an ambient adic space $\bm{Y}$ of $Y$. By localising on $\bm{Y}$ we may assume that it is Tate affinoid, with quasi-uniformiser $\varpi\in \Gamma(\bm{Y},\cO_{\bm{Y}})$. By Corollary \ref{cor: MV ss} we may assume that $X$ admits a closed immersion $u:X\hookrightarrow \D^N_Y(0;1^-)$ for some $n$. We can therefore extend $f$ to a diagram of pairs
\[ \xymatrix{ (X,\bm{X}) \ar[r]^-{u} \ar[dr]_f & (\D^N_Y(0;1^-),\D^N_{\bm{Y}}(0;1^-)) \ar[d]^{\pi} \\ & (Y,\bm{Y})   } \]
such that:
\begin{itemize}
\item $f\colon \bm{X}\rightarrow \bm{Y}$ is smooth, and $X=f^{-1}(Y)$;
\item $\mathscr{F}$ extends to a coherent sheaf on $\bm{X}$;
\item $u \colon\bm{X}\rightarrow \D^N_{\bm{Y}}(0;1^-)$ is a locally closed immersion.
\end{itemize}
Note that $X=\pi^{-1}(Y)\cap \bm{X}$ as subspaces of $\D^N_{\bm{Y}}(0;1^-)$. Let $\bm{U}\subset \D^N_{\bm{Y}}(0;1^-)$ be open subspace such that $\bm{X}$ is a closed analytic subspace of $\bm{U}$.

Since $\bm{X}$ is a locally closed analytic subspace of $\D^N_{\bm{Y}}(0;1^-)$, it is closed under generalisations, and since $Y$ is an overconvergent closed subset of $\bm{Y}$, it follows that $\pi^{-1}(Y)$ is an overconvergent closed subset of $\D^N_{\bm{Y}}(0;1^-)$. Hence $X=\pi^{-1}(Y)\cap \bm{X}$ is closed under generalisations inside $\D^N_{\bm{Y}}(0;1^-)$, that is, it is an overconvergent closed subset of $\D^N_{\bm{Y}}(0;1^-)$. It therefore follows from \cite[Chapter 0, Proposition 2.3.17]{FK18} that each $X\cap \D^N_{\bm{Y}}(0;\norm{\varpi}^{\frac{1}{n}})$ admits a basis of neighbourhoods in $\D^N_{\bm{Y}}(0;\norm{\varpi}^{\frac{1}{n}})$ consisting of overconvergent open subsets. In particular there exist overconvergent open subsets $V_n\subset\D^N_{\bm{Y}}(0;\norm{\varpi}^{\frac{1}{n}})$ such that
\[ X\cap \D^N_{\bm{Y}}(0;\norm{\varpi}^{\frac{1}{n}}) \subset V_n\subset \bm{U}\cap \D^N_{\bm{Y}}(0;\norm{\varpi}^{\frac{1}{n}}) . \]
Since $V_n$ is overconvergent, it is the preimage of an open subset of $[\D^N_{\bm{Y}}(0;\norm{\varpi}^{\frac{1}{n}})]$ via the separation map. Thus arguing as in the proof of Proposition \ref{prop: acyclic sep F}, we see that $V_n$ can be covered by open subsets $V_{n,i}$ admitting closed immersions into $\D^N_{\bm{Y}}(0;\norm{\varpi}^{\frac{1}{n}})\times_{\bm{Y}} \D^{M_{n,i}}_{\bm{Y}}(0;1^-)$ over $\D^N_{\bm{Y}}(0;\norm{\varpi}^{\frac{1}{n}})$. It follows that
\[ V_n^-:=V_n\cap \D^{N}_{\bm{Y}}(0;\norm{\varpi}^{\frac{1}{n}-}) \] admits a covering by open subsets $V^-_{n,i}:=V_{n,i}\cap \D^{N}_{\bm{Y}}(0;\norm{\varpi}^{\frac{1}{n}-})$, each of which admits a closed immersion into $\D^{2N+M_{n,i}}_{\bm{Y}}(0;1^-)$ over $\bm{Y}$. Thus the $V^-_{n,i}$ are a collection of open subspaces of $\D^N_{\bm{Y}}(0;1^-)$, covering $X$, and there are closed immersions
\[ \bm{X}\cap V^-_{n,i}\rightarrow V^-_{n,i} \rightarrow \D_{\bm{Y}}^{2N+M_{n,i}}(0;1^-)  \]
of adic spaces over $\bm{Y}$. 

Therefore, by localising on $X$, and once more appealing to Corollary \ref{cor: MV ss}, we may reduce to the case that $f$ extends to a diagram of pairs
\[ \xymatrix{ (X,\bm{X}) \ar[r]^-{u} \ar[dr]_f & (\D^{N}_Y(0;1^-),\D^{N}_{\bm{Y}}(0;1^-)) \ar[d]^{\pi} \\ & (Y,\bm{Y})   } \]
such that $u:\bm{X}\rightarrow \D^N_{\bm{Y}}(0;1^-)$ is a \emph{closed} immersion, and $\mathscr{F}$ extends to $\bm{X}$. In this case, since $Y\subset \bm{Y}$ is overconvergent, we can combine Lemma \ref{lemma: base change overconvergent closed} with Theorem \ref{theo: coh dim pp smooth} to conclude.
\end{proof}

\section{The trace map} \label{sec: trace}

In this section, we construct a trace map for the class of smooth morphisms which are partially proper in the sense of Kiehl, and whose target is an overconvergent and finite dimensional germ. This is a morphism
\[ \mathrm{Tr}_{X/Y}: \mathbf{R}f_!\Omega^{\bullet}_{X/Y}[2d] \rightarrow \mathcal{O}_Y \]
in the derived category of $\cO_Y$-modules, satisfying the conditions outlined in the introduction. We closely follow the argument of \cite{vdP92}, see also \cite{Bey97,Chi90}.

\subsection{The relative open unit polydisc}  \label{sec: relative open unit disc}

We first construct a trace map when $X=\D^d_Y(0;1^-)$ is the relative open unit polydisc over a Tate affinoid adic space $Y=\spa{R,R^+}$. Choose a quasi-uniformiser $\varpi\in R^\times\cap R^{\circ\circ}$. Since $X=\D^d_Y(0;1^-)$ is partially proper over $Y$, the support of a section of some sheaf $\mathscr{F}$ on $X$ is proper over $Y$ if and only if it is quasi-compact over $Y$, if and only if it is quasi-compact. The closure $\overline{\D}_n$ of $\D_Y^d(0;\norm{\varpi}^{\frac{1}{n}})$ inside $\D^d_Y(0;1^-)$ is quasi-compact, and moreover any quasi-compact subset of $\D^d_Y(0;1^-)$ has to be contained in $\overline{\D}_n$ for some $n$. Thus, if we let ${\rm H}^q_Z(X,-)$ denote cohomology groups with support in a closed subset $Z\subset X$, we find that
\[ {\rm H}^q_c(\D^d_Y(0;1^-)/Y,\mathscr{F}) = \mathrm{colim}_n {\rm H}^q_{\overline{\D}_n}(\D^d_Y(0;1^-),\mathscr{F}),\]
for any sheaf $\mathscr{F}$ on $\D^d_Y(0;1^-)$.

Using Proposition \ref{prop: A and B}, we can see that ${\rm H}^q(\D^d_Y(0;1^-),\mathscr{F})=0$ for any coherent $\mathcal{O}_{\D^d_Y(0;1^-)}$-module $\mathscr{F}$, and any $q>0$. Thus we deduce isomorphisms
\[ {\rm H}^q_c(\D^d_Y(0;1^-)/Y,\mathscr{F}) \isomto \begin{cases} \mathrm{ker}\left( {\rm H}^0(\D^d_Y(0;1^-),\mathscr{F}) \rightarrow \mathrm{colim}_n {\rm H}^0(\D^d_Y(0;1^-)\setminus \overline{\D}_n,\mathscr{F}) \right)   & q=0 \\ \mathrm{coker}\left( {\rm H}^0(\D^d_Y(0;1^-),\mathscr{F}) \rightarrow \mathrm{colim}_n {\rm H}^0(\D^d_Y(0;1^-)\setminus \overline{\D}_n,\mathscr{F}) \right)   & q=1
 \\ \mathrm{colim}_n {\rm H}^{q-1}(\D^d_Y(0;1^-)\setminus \overline{\D}_n,\mathscr{F}) & q>1. \end{cases} \]
We can cover $\D^d_Y(0;1^-)\setminus \overline{\D}_n$ by the spaces
\[ U_{i,n}:= \left\{\left. x\in \D^d_Y(0;1^-) \right\vert v_{[x]}(\varpi^{-1}z^n_i) > 1  \right\}, \]
each of which admits a closed immersion into an open polydisc over $Y$. Again, Proposition \ref{prop: A and B} therefore implies that coherent sheaves have vanishing higher cohomology groups on each $U_{i,n}$. The same reasoning applies to all intersections $\cap_{i\in I} U_{i,n}$, so we can compute the cohomology of $\mathscr{F}$ on $\D^d_Y(0;1^-) \setminus \overline{\D}_n$ as the cohomology of the \v{C}ech complex
\[ \bigoplus_{i=1}^d {\rm H}^0(U_{i,n},\mathscr{F}) \rightarrow \bigoplus_{i<j} {\rm H}^0(U_{i,n}\cap U_{j,n},\mathscr{F}) \rightarrow \ldots \rightarrow \bigoplus_{i=1}^d {\rm H}^0(\cap_{j\neq i} U_{j,n},\mathscr{F} ) \rightarrow  {\rm H}^0(\cap_i U_{i,n},\mathscr{F} ). \]
In the particular case when $\mathscr{F}=\omega_{\D^d_Y(0;1^-)/Y}$, we can therefore give a complete description of the cohomology groups ${\rm H}^q_c(\D^d_Y(0;1^-)/Y,\mathscr{F})$ as follows. Choose co-ordinates $z_1,\ldots,z_d$ on $Y$, and let $R\weak{z_1^{-1},\ldots,z_d^{-1}}$ denote the set of overconvergent series in $z_1^{-1},\ldots,z_d^{-1}$, that is, series of the form
\[ \sum_{i_1,\ldots,i_d\leq 0 } r_{i_1,\ldots,i_d}z_1^{i_1}\ldots z_d^{i_d},\;\;\;\;r_{i_1,\ldots,i_d}\in R, \]
for which there exists $n\geq 1$ such that $r_{i_1,\ldots,i_d}^n\varpi^{i_1+\ldots+i_d}\rightarrow 0$ as $(i_1,\ldots,i_d)\rightarrow -\infty$. Then
\[ {\rm H}^q_c(\D^d_Y(0;1^-)/Y,\omega_{\D^d_Y(0;1^-)/Y})= \begin{cases}
R\weak{z_1^{-1},\ldots,z_d^{-1}}\cdot {\rm d}\!\log z_1 \wedge\ldots\wedge {\rm d}\!\log z_d  & q=d \\ 0 & q\neq d.
\end{cases}  \]
We can therefore define the trace map
\begin{align*}
\Tr_{z_1,\ldots,z_d}: {\rm H}^d_c(\D^d_Y(0;1^-)/Y,\omega_{\D^d_Y(0;1^-)/Y}) &\rightarrow {\rm H}^0(Y,\mathcal{O}_Y) \\
\sum_{i_1,\ldots,i_d\leq 0 } a_{i_1,\ldots,i_d}z_1^{i_1}\ldots z_d^{i_d} \,{\rm d}\!\log z_1 \wedge \ldots {\rm d}\!\log z_d & \mapsto a_{0,\ldots,0}
\end{align*}
as in \cite[\S2.4]{vdP92} or \cite[\S2.1]{Bey97}. We can then globalise this construction to define
\[ \Tr_{z_1,\ldots,z_d}: \mathbf{R}^df_!\omega_{\D^d_Y(0;1^-)/Y} \rightarrow \mathcal{O}_Y \]
whenever the base $Y$ is an adic space. When $Y$ is an overconvergent germ, we pullback to $Y$ from its ambient adic space $\bm{Y}$ using Lemma \ref{lemma: base change overconvergent closed}. Also note that by Corollary \ref{cor: coh dim omega germs} we may view the trace map as a morphism
\[  \mathbf{R}f_!\omega_{\D^d_Y(0;1^-)/Y}[d]\rightarrow \mathcal{O}_Y \]
in ${\bf D}^b(\mathcal{O}_Y)$. The verification of the following is straightforward.
 
\begin{proposition} \label{prop: tr basic} Let $Y$ be an overconvergent germ. \begin{enumerate}
\item The trace map
\[ \Tr_{z_1,\ldots,z_d}: \mathbf{R}^df_!\omega_{\D^{d}_Y(0;1^-)/Y} \rightarrow \mathcal{O}_Y \]
vanishes on the image of $\mathbf{R}^df_!\Omega^{d-1}_{\D^{d}_Y(0;1^-)/Y}$, and hence induces a map
\[\Tr_{z_1,\ldots,z_d}: \mathbf{R}f_!\Omega^\bullet_{\D^{d}_Y(0;1^-)/Y}[2d] \rightarrow \mathcal{O}_Y.  \]
This map is an isomorphism.
\item The trace map is compatible with composition in the following sense: let $(z_1,\ldots,z_d)$ be co-ordinates on $\D^{d}_Y(0;1^-)$, let $1\leq e\leq d$, and let $h:\D^{d}_Y(0;1^-) \rightarrow \D^{e}_Y(0;1^-)$ be the projection $(z_1,\ldots,z_d)\mapsto (z_1,\ldots,z_e)$. Let $f:\D^{d}_Y(0;1^-)\rightarrow Y $ and $g: \D^{e}_Y(0;1^-)\rightarrow Y$ be the canonical identification
\[ \omega_{\D^{d}_Y(0;1^-)/Y} = h^*\omega_{\D^{e}_Y(0;1^-)/Y} \otimes \omega_{\D^{d}_Y(0;1^-)/\D^{e}_Y(0;1^-)}, \]
and the resulting identification
\[ \mathbf{R}f_!\left(\omega_{\D^{d}_Y(0;1^-)/Y} \right)[d] = \mathbf{R}g_!\left(\omega_{\D^{e}_Y(0;1^-)/Y} \otimes \mathbf{R}h_!\omega_{\D^{d}_Y(0;1^-)/\D^{e}_Y(0;1^-)}[d-e] \right)[e],  \]
we have
\[ \Tr_{z_1,\ldots,z_d} = \Tr_{z_1,\ldots,z_e} \circ \mathbf{R}g_!\left(\mathrm{id} \otimes \Tr_{z_{e+1},\ldots,z_d}\right).\]
\end{enumerate} 
\end{proposition}

We will see later on that $\Tr_{z_1,\ldots,z_d}$ is independent of the choice of co-ordinates $z_1,\ldots,z_d$; for now we record a special case of this.

\begin{lemma} \label{lemma: basic change 2} Suppose that $Y$ is an adic space, and let $z_1',\ldots,z'_d$ be a second set of co-ordinates on $\D^d_Y(0;1^-)$ defined by
\[ z_1' = z_1,\;\ldots\;,\; z_e'=z_e,\;z'_{e+1}= z_{e+1} +w_{e+1}, \;\ldots\;, \;z'_d=z_d + w_d  \]
for sections $w_i:\D^e_Y(0;1^-) \rightarrow \D^d_Y(0;1^-)$ of the natural projection. Then $\Tr_{z_1,\ldots,z_d}=\Tr_{z_1',\ldots,z'_d}$.
\end{lemma}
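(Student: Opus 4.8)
The plan is to reduce, via the composition compatibility of the trace, to the one-variable situation already handled in Lemma \ref{lemma: change1}. We may assume throughout that $Y=\spa{R,R^+}$ is Tate affinoid. First I would apply the analogue of Proposition \ref{prop: tr basic}(\ref{num: tr basic comp}) for the relative unit polydisc to factor $\Tr_{z_1,\ldots,z_d}$ through the projection $h:\D^d_Y(0;1)\to \D^e_Y(0;1)$ onto the first $e$ coordinates. Since $z'_i=z_i$ for $i\leq e$, the outer factor $\Tr_{z_1,\ldots,z_e}$ is literally unchanged, so the problem reduces to comparing the \emph{relative} traces $\Tr_{z_{e+1},\ldots,z_d}$ and $\Tr_{z'_{e+1},\ldots,z'_d}$ for the morphism $\D^d_Y(0;1)\to \D^e_Y(0;1)$. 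The crucial point of this step is that the new base $B:=\D^e_Y(0;1)=\spa{R\tate{z_1,\ldots,z_e},R^+\tate{z_1,\ldots,z_e}}$ is again Tate affinoid, and over $B$ each $w_i\in R^+\tate{z_1,\ldots,z_e}=\Gamma(B,\mathcal{O}_B^+)$ is now a power-bounded \emph{global section} — that is, a constant relative to the fibre coordinates $z_{e+1},\ldots,z_d$. This is exactly what fails over $Y$ itself, where $w_i$ is a genuine function on the disc that does not extend to $\A^{d,\an}_Y$.

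Next I would change coordinates one index at a time, replacing $z_i$ by $z_i+w_i$ for a single $i>e$ while leaving all other coordinates fixed. For each such elementary change, a further application of the composition formula (after reordering so that $z_i$ is the last coordinate) isolates the single-variable relative trace in the $z_i$-direction over the Tate affinoid base $C_i$ obtained by forgetting $z_i$; because the other coordinates are untouched at this step, the two projections to $C_i$ agree, the outer factor is the same for both systems, and only the one-variable relative trace must be compared. Here $w_i$, depending only on $z_1,\ldots,z_e$, remains a constant over $C_i$, and since it is power-bounded we have $\norm{w_i}\leq 1$, so the discs $\{\norm{z_i}\leq 1\}$ and $\{\norm{z_i+w_i}\leq 1\}$ coincide and the open immersion into the relative analytic affine line is the same for both coordinates; moreover the triangular change of variables has unit Jacobian, so $dz_i$ and $dz_i+dw_i$ give the same top form. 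It therefore suffices to show $\Tr_{z_i}=\Tr_{z_i+w_i}$ on $\A^{1,\an}_{C_i}$, which is precisely Lemma \ref{lemma: change1} applied over the affinoid base $C_i$ with $a=w_i$. Composing these elementary equalities for $i=e+1,\ldots,d$ yields $\Tr_{z_1,\ldots,z_d}=\Tr_{z'_1,\ldots,z'_d}$.

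The substantive content lies entirely in the first paragraph: recognising that absorbing the $z_1,\ldots,z_e$-dependence into the base converts the non-extendable change of variables into a shift by a bounded constant, which \emph{does} extend to the relative affine line. I expect no genuine obstacle beyond carefully checking that at each elementary step the projection used in the composition formula, and hence the outer factor of the trace, is unaffected by the coordinate change — after which everything is bookkeeping around Proposition \ref{prop: tr basic}(\ref{num: tr basic comp}) and a direct appeal to Lemma \ref{lemma: change1}.
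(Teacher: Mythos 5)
Your proposal is correct and follows essentially the same route as the paper's proof, which also reduces by induction to a single changed coordinate, uses compatibility of the trace with composition to absorb the unchanged coordinates into a new (still Tate affinoid) base over which $w_i$ becomes a global power-bounded section, and then extends the shift to $\A^{1,\an}$ and invokes Lemma \ref{lemma: change1}. Your writeup merely spells out in more detail the points the paper leaves implicit (that the discs $\{\norm{z_i}\leq 1\}$ and $\{\norm{z_i+w_i}\leq 1\}$ coincide and that the outer factor of the trace is unaffected).
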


\begin{proof}
We may assume by localising that $Y=\spa{R,R^+}$ is affinoid, by induction that $e=d-1$, and by compatibility of the trace map with composition that $d=1$. In this case, the claim follows from the usual explicit calculation, which is an easy generalisation of a very special case of \cite[Proposition 2.1.3]{Bey97}.
\end{proof}

\begin{remark} \label{rem: A^d trace} 
As a variant, we can replace $f:\D^d_Y(0;1^-)\rightarrow Y$ everywhere by the relative analytic affine space $f:\A^{d,\an}_Y\rightarrow Y$. The construction of the trace map
\[ \Tr_{z_1,\ldots,z_d}: \mathbf{R}f_!\omega_{\A^{d,\an}_Y/Y}[	d] \rightarrow \mathcal{O}_Y, \]
is entirely similar, and the analogues of Proposition \ref{prop: tr basic} and Lemma \ref{lemma: basic change 2} hold. 
\end{remark}

\subsection{Duality for regular immersions} \label{sec: closed dual}

To extend the trace map from open polydiscs to more general morphisms, we will need a form of duality for regular closed immersions. Luckily, this follows quite quickly from the scheme-theoretic case.

\begin{lemma} \label{lemma: perf}Let $X=\spa{R,R^+}$ be a Tate affinoid adic space. Then
\[ \mathbf{R}\Gamma(X,-) : {\bf D}(\mathcal{O}_X) \rightarrow {\bf D}(R) \]
induces a $t$-exact equivalence of triangulated categories
\[ {\bf D}^+_\mathrm{coh}(\mathcal{O}_X) \isomto {\bf D}^+_\mathrm{coh}(R) \]
compatible with internal homs.
\end{lemma}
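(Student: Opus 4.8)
The plan is to reduce everything to Kiehl's Theorems A and B for Tate affinoids, in exactly the form already invoked in \S\ref{sec: trace con} (i.e. \cite[Theorem 2.7.7]{KL15}, or the combination of \cite[Chapter II, Theorems A.4.7 and 6.5.7]{FK18}). For $X=\spa{R,R^+}$ with $R$ Noetherian these supply two inputs: the \emph{associated-sheaf functor} $M\mapsto \widetilde{M}:=M\otimes_R\mathcal{O}_X$ is quasi-inverse to $\Gamma(X,-)$, giving an equivalence of abelian categories $\mathrm{Coh}(R)\isomto\mathrm{Coh}(\mathcal{O}_X)$; and every coherent $\mathcal{O}_X$-module is $\Gamma(X,-)$-acyclic. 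Since $R$ is Noetherian by our standing assumptions, the rational localisations $R\to\mathcal{O}_X(U)$ are flat, so $\mathcal{O}_X$ is a flat sheaf of $R$-algebras and $\widetilde{(-)}$ is exact. These are the only non-formal facts I would use.

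First I would record $t$-exactness, since it is also the mechanism behind the equivalence. For $\mathscr{F}^\bullet\in D^+_{\mathrm{coh}}(\mathcal{O}_X)$ the second hypercohomology spectral sequence reads $E_2^{p,q}=H^p(X,\mathcal{H}^q(\mathscr{F}^\bullet))\Rightarrow \mathcal{H}^{p+q}(\mathbf{R}\Gamma(X,\mathscr{F}^\bullet))$. By definition of $D^+_{\mathrm{coh}}$ the sheaves $\mathcal{H}^q(\mathscr{F}^\bullet)$ are coherent, so Theorem B kills the terms with $p>0$ and the sequence degenerates, giving a natural isomorphism $\mathcal{H}^i(\mathbf{R}\Gamma(X,\mathscr{F}^\bullet))\cong\Gamma(X,\mathcal{H}^i(\mathscr{F}^\bullet))$. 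Under the abelian equivalence of hearts this is precisely the statement that $\mathbf{R}\Gamma$ carries the canonical $t$-structure to the canonical one.

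Next I would deduce that $\mathbf{R}\Gamma(X,-)$ and $\widetilde{(-)}$ are mutually quasi-inverse on $D^+_{\mathrm{coh}}$ by checking both composites are the identity. In one direction, for $M^\bullet\in D^+_{\mathrm{coh}}(R)=D^+(\mathrm{Coh}(R))$ the complex $\widetilde{M^\bullet}$ is termwise coherent, hence $\Gamma$-acyclic by Theorem B, so $\mathbf{R}\Gamma(X,\widetilde{M^\bullet})$ is computed termwise and equals $M^\bullet$ by Theorem A. In the other direction, the counit $\widetilde{\mathbf{R}\Gamma(X,\mathscr{F}^\bullet)}\to\mathscr{F}^\bullet$ is a quasi-isomorphism: using exactness of $\widetilde{(-)}$, the $t$-exactness isomorphism above, and $\widetilde{\Gamma(X,-)}\cong\mathrm{id}$ on $\mathrm{Coh}(\mathcal{O}_X)$ (Theorem A), it induces $\mathcal{H}^i\widetilde{\mathbf{R}\Gamma(X,\mathscr{F}^\bullet)}\cong\widetilde{\Gamma(X,\mathcal{H}^i\mathscr{F}^\bullet)}\cong\mathcal{H}^i\mathscr{F}^\bullet$. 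This gives the $t$-exact equivalence of triangulated categories.

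The remaining clause, compatibility with internal homs, is the step I expect to be the main obstacle, and I would treat it last. The claim is that the natural comparison map $\widetilde{\mathbf{R}\mathrm{Hom}_R(M^\bullet,N^\bullet)}\to\mathbf{R}\Hom_{\mathcal{O}_X}(\widetilde{M^\bullet},\widetilde{N^\bullet})$ is an isomorphism, say for $M^\bullet$ perfect (or bounded coherent) and $N^\bullet\in D^+_{\mathrm{coh}}(R)$. One reduces to $M^\bullet=M$ a single coherent module, and then, working locally on $X$ and using coherence over the Noetherian ring $R$, to the case where $M$ admits a finite free resolution; for $M$ finite free the identity is the trivial $\mathrm{Hom}_R(R^{\oplus n},N)\otimes_R\mathcal{O}_X=\mathcal{O}_X^{\oplus n}\otimes_R N$. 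The genuine content is thus flat base change for $\mathrm{Hom}$ out of a finitely presented module, combined with the exactness of $\widetilde{(-)}$ to commute it past the resolution, so that the flatness of $\mathcal{O}_X$ over $R$ and the finite-presentation hypotheses are exactly what make the sheaf-theoretic $\mathbf{R}\Hom_{\mathcal{O}_X}$ match the algebraic $\mathbf{R}\mathrm{Hom}_R$. Taking $\mathbf{R}\Gamma(X,-)$ of this identity and invoking Theorem B once more then also recovers the global statement $\mathbf{R}\mathrm{Hom}_R(\mathbf{R}\Gamma\mathscr{F},\mathbf{R}\Gamma\mathscr{G})\isomto\mathbf{R}\Gamma(X,\mathbf{R}\Hom_{\mathcal{O}_X}(\mathscr{F},\mathscr{G}))$, which in particular re-proves full faithfulness of the equivalence.
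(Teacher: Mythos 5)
Your proof is correct and follows essentially the same route as the paper's: both rest on Kiehl's Theorems A and B for Tate affinoids together with flatness of $\mathcal{O}_X$ over $R$, taking $-\otimes_R\mathcal{O}_X$ as the quasi-inverse and deducing $t$-exactness from the vanishing of higher coherent cohomology. The only cosmetic difference is at the internal-homs step, where the paper invokes monoidality of the left adjoint $-\otimes^{\mathbf{L}}_R\mathcal{O}_X$ formally, while you unwind the same content explicitly via finite free resolutions and flat base change for $\mathrm{Hom}$.
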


\begin{remark} The $t$-exactness here refers to the obvious $t$-structures on either side.
\end{remark}

\begin{proof} As noted in Remark \ref{rem: A and B}, ${\rm H}^0(X,-)$ is an equivalence of categories between coherent $\mathcal{O}_X$-modules and coherent (i.e. finitely generated) $R$-modules, and ${\rm H}^q(X,\mathscr{F})=0$ for any coherent $\mathcal{O}_{X}$-module $\mathscr{F}$ and any $q>0$. It then follows from this that 
\[ \mathbf{R}\Gamma: {\bf D}^+_\mathrm{coh}(\mathcal{O}_X) \rightarrow {\bf D}^+_\mathrm{coh}(R)  \]
is $t$-exact. To see that it is an equivalence, we consider the left adjoint
\[  -\otimes^\mathbf{L}_{R} \mathcal{O}_X :{\bf D}^+_\mathrm{coh}(R) \rightarrow {\bf D}^+_\mathrm{coh}(\mathcal{O}_X). \]
Essential surjectivity now follows from the fact that $\mathcal{O}_X$ is $R$-flat, and full faithfulness from the fact that the adjunction map
\[ \mathcal{O}_X \otimes^\mathbf{L}_R \mathbf{R}\Gamma(X,\mathscr{F})   \rightarrow \mathscr{F} \]
is an isomorphism, for any $\mathscr{F}\in {\bf D}^+_\mathrm{coh}(\mathcal{O}_X)$. Compatibility with internal homs now follows from the fact that the left adjoint $-\otimes^\mathbf{L}_R \mathcal{O}_X$ is monoidal.
\end{proof}

Recall that on a locally ringed space $(X,\mathcal{O}_X)$, a perfect complex of $\mathcal{O}_X$-modules is one that is locally quasi-isomorphic to a bounded complex of finite free $\mathcal{O}_X$-modules. Similarly, if $A$ is a ring, then a perfect complex of $A$-modules is a complex quasi-isomorphic to a bounded complex of finite projective $A$-modules.\footnote{Thus being a perfect complex of $A$-modules is stronger than being a perfect complex of $\mathcal{O}_{\spec{A}}$-modules.} The categories of such objects are viewed as full subcategories of ${\bf D}(\mathcal{O}_X)$ and ${\bf D}(A)$ respectively.

\begin{definition} A closed immersion $u:X\rightarrow Y$ of adic spaces is called regular of codimension $c$ if it is locally the vanishing locus of a regular sequence $f_1,\ldots,f_c\in \Gamma(Y,\mathcal{O}_Y)$.
\end{definition}

\begin{lemma} \label{lemma: closed dual} Let $u:X\rightarrow Y$ be a closed immersion of adic spaces, regular of codimension $c$, and let $\fr{n}_{X/Y}$ be the determinant of the normal bundle of $X$ in $Y$. Then, for any perfect complex $\mathscr{F}$ of $\mathcal{O}_X$-modules, there is a canonical isomorphism
\[ \Tr_u : u_*\mathbf{R}\Hom_{\mathcal{O}_X}(\mathscr{F},\mathfrak{n}_{X/Y}) \isomto \mathbf{R}\Hom_{\mathcal{O}_Y}(u_*\mathscr{F},\mathcal{O}_Y)[c] \]
in ${\bf D}(\mathcal{O}_Y)$, natural in $\mathscr{F}$. This is compatible with composition, in the sense that if $v:Y\rightarrow Z$ is a regular closed immersion of codimension $d$, and $\mathfrak{n}_{Y/Z}$ (resp. $\mathfrak{n}_{X/Z}$) the determinant of its normal bundle (resp. the normal bundle of $X$ in $Z$), then, via the identification $\fr{n}_{X/Z}= \fr{n}_{X/Y}\otimes_{\mathcal{O}_X} u^*\fr{n}_{Y/Z}$, the diagram
\[ \xymatrix{ (v\circ u)_*\mathbf{R}\Hom_{\mathcal{O}_X}(\mathscr{F},\fr{n}_{X/Z})  \ar[r]^-{\Tr_u}\ar[dr]_{\Tr_{v\circ u}} & v_*\mathbf{R}\Hom_{\mathcal{O}_Y}(u_*\mathscr{F},\fr{n}_{Y/Z})[c] \ar[d]^{\Tr_v} \\ & \mathbf{R}\Hom_{\mathcal{O}_Z}((v\circ u)_*\mathscr{F},\mathcal{O}_Z)[c+d] } \]
commutes. 
\end{lemma}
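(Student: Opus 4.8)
The plan is to transport the entire statement to commutative algebra by means of Lemma \ref{lemma: perf}, and there to deduce it from the self-duality of the Koszul complex. Write $Y=\spa{R,R^+}$ and $X=\spa{S,S^+}$, so that the closed immersion $f$ is induced by a surjection $R\twoheadrightarrow S=R/I$ with $I$ locally generated by a regular sequence of length $c$. Under the $t$-exact equivalences $\mathbf{R}\Gamma(X,-)\colon D^+_\mathrm{coh}(\mathcal{O}_X)\isomto D^+_\mathrm{coh}(S)$ and $\mathbf{R}\Gamma(Y,-)\colon D^+_\mathrm{coh}(\mathcal{O}_Y)\isomto D^+_\mathrm{coh}(R)$ of Lemma \ref{lemma: perf}, the sheafy internal homs $\mathbf{R}\Hom_{\mathcal{O}_X}$ and $\mathbf{R}\Hom_{\mathcal{O}_Y}$ correspond to the module-theoretic $\mathbf{R}\mathrm{Hom}_S$ and $\mathbf{R}\mathrm{Hom}_R$ (by the asserted compatibility with internal homs), perfect complexes correspond to perfect complexes (since the equivalence is monoidal, so preserves dualizable objects), the invertible sheaf $\fr{n}_{X/Y}$ corresponds to the invertible $S$-module $N:=\det_S(I/I^2)^\vee$, and, because $\mathbf{R}\Gamma(Y,f_*\mathscr{F})=\mathbf{R}\Gamma(X,\mathscr{F})$ as $R$-complexes, the (exact) functor $f_*$ corresponds to restriction of scalars along $R\to S$. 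It therefore suffices to produce a canonical, functorial isomorphism of $R$-complexes
\[ \mathbf{R}\mathrm{Hom}_S(M,N)\isomto \mathbf{R}\mathrm{Hom}_R(M,R)[c] \]
for every perfect complex $M$ of $S$-modules.

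To build this I would first invoke the adjunction between restriction of scalars and coinduction: for any $M\in D(S)$ there is a natural isomorphism
\[ \mathbf{R}\mathrm{Hom}_R(M,R)\cong \mathbf{R}\mathrm{Hom}_S\!\left(M,\mathbf{R}\mathrm{Hom}_R(S,R)\right), \]
valid for any finite ring map with no regularity hypothesis. It then remains to identify the relative dualizing complex $\mathbf{R}\mathrm{Hom}_R(S,R)$. Choosing, locally on the affine scheme $\spec{R}$, a regular sequence $(g_1,\dots,g_c)$ generating $I$, the Koszul complex on $(g_1,\dots,g_c)$ is a finite free resolution of $S$ over $R$, and its self-duality yields $\mathrm{Ext}^i_R(S,R)=0$ for $i\neq c$ together with a canonical identification $\mathrm{Ext}^c_R(S,R)\cong N$; this identification is independent of the chosen generators, so the local isomorphisms glue to a global fundamental local isomorphism
\[ \mathbf{R}\mathrm{Hom}_R(S,R)\cong N[-c]. \]
Substituting this into the previous display produces $\Tr_f$; since $R$ is Noetherian and affine the whole computation takes place at the level of modules.

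Finally, for compatibility with composition, write $Z=\spa{T,T^+}$ and $R=T/J$, so that $g$ has conormal determinant $N_g=\det_R(J/J^2)^\vee$ corresponding to $\fr{n}_{Y/Z}$. The essential input is transitivity of the fundamental local isomorphism: applying the coinduction adjunction twice and using $\mathbf{R}\mathrm{Hom}_R(S,L)\cong \mathbf{R}\mathrm{Hom}_R(S,R)\otimes^{\mathbf{L}}_R L$ for an invertible module $L$, one computes
\[ \mathbf{R}\mathrm{Hom}_T(S,T)\cong \mathbf{R}\mathrm{Hom}_R\!\left(S,\mathbf{R}\mathrm{Hom}_T(R,T)\right)\cong N[-c]\otimes^{\mathbf{L}}_R N_g[-d]\cong \fr{n}_{X/Z}[-c-d], \]
where the last step is precisely the given decomposition $\fr{n}_{X/Z}\cong\fr{n}_{X/Y}\otimes_{\mathcal{O}_X}f^*\fr{n}_{Y/Z}$ read on modules. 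Feeding the three local isomorphisms (for $f$, $g$, and $g\circ f$) into the adjunction identities and chasing then gives commutativity of the required triangle. I expect this last step to be the main obstacle: although every ingredient is standard, making the determinant identifications and the two coinduction adjunctions match up so that the composition triangle commutes \emph{on the nose} is the only genuinely delicate bookkeeping, and it may be cleaner to cite the transitivity of Grothendieck's fundamental local isomorphism for regular immersions from the scheme-theoretic theory of coherent duality rather than to reprove it by hand.
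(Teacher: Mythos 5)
Your proposal is correct and follows essentially the same route as the paper: both reduce via Lemma \ref{lemma: perf} to the affine Noetherian (module-theoretic) setting, use the adjunction between $f_*$ and $f^\flat=\mathbf{R}\mathrm{Hom}_R(S,-)$, identify the relative dualising complex with $\fr{n}_{X/Y}[-c]$ by Koszul self-duality for a local choice of regular generators, and verify transitivity of this fundamental local isomorphism locally. The paper simply outsources the adjunction and the composition formalism to \cite[Chapter III, Theorem 6.7 and Proposition 6.2]{Har66} rather than rederiving them on modules, which is the shortcut you yourself anticipate at the end.
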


\begin{remark} Note that pushforward along a regular closed immersion preserves perfect complexes, which can be seen, for example by considering the Koszul complex of a regular generating sequence of the corresponding ideal sheaf.
\end{remark}

\begin{proof} This is essentially a case of carefully combining Lemma \ref{lemma: perf} above with coherent duality for schemes treated in \cite{Har66}. First of all, we define a functor
\[ u^\flat:= u^{-1}\mathbf{R}\Hom_{\mathcal{O}_Y}(u_*\mathcal{O}_X,-) : {\bf D}^+_\mathrm{coh}(\cO_Y)\rightarrow {\bf D}^+_\mathrm{coh}(\cO_X), \]
that this does indeed land in ${\bf D}^+_\mathrm{coh}(\cO_X)$ can be checked locally on $Y$, whence it follows from Lemma \ref{lemma: perf} together with the corresponding result for schemes \cite[Chapter III, Proposition 6.1]{Har66}. Next, the canonical morphism
\[ \mathbf{L}u^*u_*\mathcal{O}_X \rightarrow \mathcal{O}_X \]
induces, for any $\mathscr{F}\in {\bf D}^+_\mathrm{coh}(\cO_X)$, a map
\[ \mathscr{F}= \mathbf{R}\Hom_{\mathcal{O}_X}(\mathcal{O}_X,\mathscr{F}) \rightarrow  \mathbf{R}\Hom_{\mathcal{O}_X}(\mathbf{L}u^*u_*\mathcal{O}_X,\mathscr{F}) = u^{-1}\mathbf{R}\Hom_{\mathcal{O}_Y}(u_*\mathcal{O}_X,u_*\mathscr{F}) = u^\flat u_*\mathscr{F}, \]
which we claim induces an adjunction between $u_*$ and $u^\flat$. Since the unit is defined globally, the fact that it defines an adjunction can be checked locally, when again it follows from Lemma \ref{lemma: perf} together with the analogous result for schemes \cite[Chapter III, Theorem 6.7]{Har66}. Now uniqueness of adjoints gives rise to a canonical isomorphism $(v\circ u)^\flat\cong u^\flat\circ v^\flat$ whenever $X\overset{u}{\rightarrow}Y \overset{v}{\rightarrow} Z$ is a pair of regular closed immersions between adic spaces, and this isomorphism can, locally, be identified with that from \cite[Chapter III, Proposition 6.2]{Har66}.

The first claim therefore reduces to constructing a natural isomorphism
\[ \chi_u:u^\flat\mathcal{O}_Y \cong \mathfrak{n}_{X/Y}[-c]. \]
in ${\bf D}^+_\mathrm{coh}(\cO_X)$. Given this, the second claim then boils down to showing that if $X\overset{u}{\rightarrow}Y \overset{v}{\rightarrow} Z$ is a pair of regular closed immersions between adic spaces, then the diagram
 \[ \xymatrix{ (v\circ u)^\flat\mathcal{O}_Z  \ar[d]_{\mathrm{canonical}}\ar[rr]^-{\chi_{v\circ u}} & & \mathfrak{n}_{X/Z}[-c-d] \ar[d]^{\mathrm{canonical}} \\
u^\flat v^\flat\mathcal{O}_Z \ar[r]^-{\chi_v} & u^\flat\mathfrak{n}_{Y/Z}[-d] \ar[r]^-{\chi_u} & u^*\mathfrak{n}_{Y/Z} \otimes_{\mathcal{O}_X} \mathfrak{n}_{X/Y}[-c-d]    } \]
commutes. Since the first claim in particular implies that the relative dualising complex $u^\flat\mathcal{O}_Y$ is concentrated in a single degree, they may be jointly checked locally on $Z$. Thus we may assume, in the first case, that $X$ is cut out by a global regular sequence in $Y$, and in the second case, that moreover $Y$ is also cut out by a global regular sequence in $Z$. Under these assumptions, the claims are both easily verified (and in fact are a consequence of the analogous results for schemes).
\end{proof}

\subsection{Closed subspaces of open polydiscs} \label{sec: trace closed}

We will apply the results of \S\ref{sec: closed dual} to a closed immersion $u:X\rightarrow \D^N_Y(0;1^-)$ of adic spaces, over a finite dimensional adic space $Y$, such that the composite $f:=\pi\circ u$ 
\[ X\overset{u}{\hookrightarrow} \D^N_Y(0;1^-) \overset{\pi}{\rightarrow} Y \]
of $u$ with the natural projection $\pi$ is smooth of relative dimension $d$. Since $\omega_{X/Y}\cong \fr{n}_{X/\D^N_Y(0;1^-)}\otimes_{\mathcal{O}_X} u^*\omega_{\D^N_Y(0;1^-)/Y}$, by taking $\mathcal{F}=\mathcal{O}_X$, tensoring both sides with $\omega_{\D^N_Y(0;1^-)/Y}$, and using the projection formula, we obtain an isomorphism
\[ \Tr_u: u_*\omega_{X/Y} \isomto \mathbf{R}\Hom_{\mathcal{O}_{\D^N_Y(0;1^-)}}(u_*\mathcal{O}_X,\omega_{\D^N_Y(0;1^-)/Y})[N-d]. \]
Hence applying $\mathbf{R}^d\pi_!$ gives an isomorphism
\[ 
\mathbf{R}^df_!\omega_{X/Y} \isomto \mathbf{R}^N\pi_!\mathbf{R}\Hom_{\mathcal{O}_{\D^N_Y(0;1^-)}}(u_*\mathcal{O}_X,\omega_{\D^N_Y(0;1^-)/Y}). \]
Restricting along $\mathcal{O}_{\D^N_Y(0;1^-)}\rightarrow u_*\mathcal{O}_X$ gives a map
\[
\mathbf{R}^df_!\omega_{X/Y} \rightarrow \mathbf{R}^N\pi_!\omega_{\D^N_Y(0;1^-)/Y}, \]
and finally composing with $\Tr_{z_1,\ldots,z_N}$ for a choice of co-ordinates on $\D^N_Y(0;1^-)$ gives a trace map
\[ \Tr_{X/Y}: \mathbf{R}^df_!\omega_{X/Y} \rightarrow \mathcal{O}_Y.\]
Via Theorem \ref{theo: coh dim pp smooth} we may view this as a map
\[\mathbf{R}f_!\omega_{X/Y}[d] \rightarrow \mathcal{O}_Y.\]

\begin{proposition} \label{prop: tr1} Suppose that $Y$ is a finite dimensional adic space, and $f:X\rightarrow Y$ is a smooth morphism of relative dimension $d$, factoring through a closed immersion into an open unit polydisc over $Y$. 
\begin{enumerate}
\item The induced map $\Tr_{X/Y}: \mathbf{R}f_!\omega_{X/Y}[d]\rightarrow \mathcal{O}_Y$ does not depend on the choice of embedding $u:X\hookrightarrow \D^N_Y(0;1^-)$ over $Y$.
\item Suppose that $g\colon Y \rightarrow Z$ is a smooth morphism of relative dimension $e$, factoring through a closed embedding into some relative open disc $\D^{M}_Z(0;1^-)$. Then, via the identification $\omega_{X/Z} = \omega_{X/Y}\otimes f^*\omega_{Y/Z}$, the diagram
\[ \xymatrix{ \mathbf{R}(g\circ f)_!\omega_{X/Z}[d+e] \ar[rr]^-{\mathbf{R}g_!(\mathrm{Tr}_{X/Y})}\ar[drr]_{\Tr_{X/Z}} & & \mathbf{R}g_!\omega_{Y/Z}[e] \ar[d]^{\Tr_{Y/Z}} \\ & & \mathcal{O}_Y   } \]
commutes.
\item The trace map vanishes on the image of 
\[ \mathbf{R}^df_!\Omega^{d-1}_{X/Y} \rightarrow \mathbf{R}^df_!\omega_{X/Y}, \]
and hence descends to a map
\[ \mathrm{Tr}_{X/Y}: \mathbf{R}f_!\Omega^\bullet_{X/Y}[2d]\rightarrow \mathcal{O}_Y. \] 
\end{enumerate}	
\end{proposition}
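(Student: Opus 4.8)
Throughout I may assume $Y=\spa{R,R^+}$ is affinoid, since all three assertions are $\mathcal{O}_Y$-linear and local on $Y$, and the whole construction is compatible with restriction to opens of $Y$ and with the localisations $\spa{\kappa(y)}\to Y$ via Corollary \ref{cor: prop push supp}. The common strategy is to transport each question, through the duality isomorphism of Lemma \ref{lemma: closed dual}, to the relative polydisc, where the trace is controlled by Proposition \ref{prop: tr basic}.

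For (1), fix closed immersions $u_i\colon X\hookrightarrow \D^{N_i}_Y(0;1)=:P_i$ ($i=1,2$) and compare each with the product immersion $v=(u_1,u_2)\colon X\hookrightarrow P_1\times_Y P_2=\D^{N_1+N_2}_Y(0;1)$; by symmetry it is enough to show $\Tr_v=\Tr_{u_1}$. Since $u_1$ is a closed immersion of affinoids, the coordinates of $u_2$ lift to functions on $P_1$, which after scaling by a power of a pseudo-uniformiser (a modification that leaves the trace unchanged, being a straightforward scaling variant of Lemmas \ref{lemma: change1} and \ref{lemma: basic change 2}) may be taken to lie in $R^+\tate{z_1,\ldots,z_{N_1}}$; they define a morphism $\tilde u_2\colon P_1\to P_2$ with $\tilde u_2\circ u_1=u_2$, whose graph $\gamma=(\iden,\tilde u_2)\colon P_1\hookrightarrow P_1\times_Y P_2$ is a regular closed immersion of affinoids satisfying $v=\gamma\circ u_1$. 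Writing $\Tr_v=\Tr_{z,w}\circ\alpha_v$, the composition-compatibility of coherent duality (Lemma \ref{lemma: closed dual}) factors $\alpha_v$ through $\alpha_{u_1}$ and $\alpha_\gamma$, while Proposition \ref{prop: tr basic}(\ref{num: tr basic comp}) factors $\Tr_{z,w}$ as $\Tr_z$ preceded by the fibrewise trace $\Tr_w$ in the $w$-directions; the point is then that $\alpha_\gamma$ followed by $\Tr_w$ is the identity, because the translation $w\mapsto w-\tilde u_2$ carries $\gamma$ to the zero-section --- for which Gysin-then-fibrewise-trace is visibly the identity --- and this translation preserves $\Tr_w$ by Lemma \ref{lemma: basic change 2}. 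Hence $\Tr_v=\Tr_{u_1}$, and symmetrically $\Tr_v=\Tr_{u_2}$.

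For (2), having established (1), I may compute both traces through a single ambient space. Fix a closed immersion $u\colon X\hookrightarrow P:=\D^N_Y(0;1)$ with projection $g\colon P\to Y$; as $X\setminus U$ is closed in $X$ and hence in $P$, the set $\Omega:=P\setminus u(X\setminus U)$ is open and $u$ restricts to a closed immersion $u_U\colon U\hookrightarrow\Omega$. Every ingredient of the trace --- the duality isomorphism of Lemma \ref{lemma: closed dual}, the restriction along $\mathcal{O}_P\to u_*\mathcal{O}_X$, and the projection-formula identification of Lemma \ref{lemma: proj form} --- is local on the ambient space, hence compatible with passage from $P$ to the open $\Omega$; moreover the trace of $\Omega$ is by construction the polydisc trace $\Tr_{z_1,\ldots,z_N}$ precomposed with extension by zero. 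Assembling these naturalities gives a commutative diagram whose two outer paths are exactly $\Tr_{U/Y}$ and $\Tr_{X/Y}\circ j_!$, and a final appeal to (1) identifies the trace computed through $\Omega$ with that computed through a genuine polydisc embedding of $U$.

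For (3), the identity $\mathbf{R}^{2d}f_!\Omega^\bullet_{X/Y}=\mathrm{coker}(\mathbf{R}^df_!\Omega^{d-1}_{X/Y}\to\mathbf{R}^df_!\omega_{X/Y})$ recorded at the start of \S\ref{sec: trace} shows that the vanishing statement and the descent statement coincide, so it suffices to prove $\Tr_{X/Y}\circ\mathbf{R}^df_!(d)=0$. Writing $\Tr_{X/Y}=\Tr_{z_1,\ldots,z_N}\circ\alpha$ with $\alpha\colon\mathbf{R}^df_!\omega_{X/Y}\to\mathbf{R}^Ng_!\omega_{P/Y}$ the Gysin map of Lemma \ref{lemma: closed dual}, the crux is that $\alpha$ is compatible with the relative de\thinspace Rham differential, in the sense that $\alpha\circ\mathbf{R}^df_!(d)$ factors through the image of $\mathbf{R}^Ng_!(d)\colon\mathbf{R}^Ng_!\Omega^{N-1}_{P/Y}\to\mathbf{R}^Ng_!\omega_{P/Y}$; granting this, the result follows from the corresponding vanishing on the polydisc, which is Proposition \ref{prop: tr basic}(1) together with its variant of \S\ref{sec: rup}. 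Establishing that the Gysin map respects the de\thinspace Rham differential is the main obstacle of the whole proposition: it is the single point that is not purely formal manipulation of $\mathbf{R}f_!$ or abstract coherent duality, and I would settle it by reducing --- locally on $Y$, after choosing a regular sequence cutting out $X$ in $P$ --- to an explicit computation with the Koszul/residue description of $\alpha$ furnished by the proof of Lemma \ref{lemma: closed dual}, where compatibility of the residue symbol with $d$ is a direct verification.
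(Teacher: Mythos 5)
Your part (1) is essentially the standard product--embedding argument (the paper simply delegates it to \cite[\S3]{vdP92}), and is fine modulo the unproved but harmless scaling lemma you flag. The real problems are in parts (2) and (3).

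In (2) the naturality you invoke does show that $\Tr_{X/Y}\circ j_!$ agrees with the composite obtained by applying the Gysin map for $u_U:U\hookrightarrow\Omega$, restricting along $\mathcal{O}_\Omega\to (u_U)_*\mathcal{O}_U$, extending by zero from $\Omega$ to $P$, and applying $\Tr_{z_1,\ldots,z_N}$. What it does not show is that this composite equals $\Tr_{U/Y}$, and your ``final appeal to (1)'' cannot supply this: part (1) compares two \emph{closed} embeddings of $U$ into unit polydiscs, whereas $U\hookrightarrow\Omega$ is a closed embedding into an \emph{open subset} of a polydisc (the composite $U\to P$ is only locally closed), so (1) simply does not apply. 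The missing content is exactly the computational core of the paper's proof: after reducing, via the composition--compatibility of Lemma \ref{lemma: closed dual} and of $\Tr_{z_1,\ldots,z_{N+M}}$ (Proposition \ref{prop: tr basic}(\ref{num: tr basic comp})), to the case $N=0$, one must verify that for an open affinoid $U\subset Y$ and a closed immersion $v:U\hookrightarrow\D^M_Y(0;1)$ over $Y$, the composite
\[ j_!\mathcal{O}_U=g_!\underline{\mathrm{Ext}}^M_{\mathcal{O}_{\D^M_Y(0;1)}}(v_*\mathcal{O}_U,\omega_{\D^M_Y(0;1)/Y})\rightarrow \mathbf{R}^Mg_!\omega_{\D^M_Y(0;1)/Y}\overset{\Tr}{\longrightarrow}\mathcal{O}_Y \]
is the canonical map $j_!\mathcal{O}_U\rightarrow\mathcal{O}_Y$; this is done by reducing to the zero section and computing directly. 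Without some equivalent of this step your argument begs the question. (Two secondary issues: $\Omega=P\setminus u(X\setminus U)$ need not be affinoid, so Lemma \ref{lemma: closed dual} does not apply to $u_U$ as stated and a gluing argument is needed; the paper instead works with an open \emph{affinoid} $V\subset P$ with $V\cap X=U$, for which $\Tr_{V/Y}$ is genuinely defined by the construction.)

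For (3), your route --- showing that the Gysin map of Lemma \ref{lemma: closed dual} intertwines the de\thinspace Rham differentials and then quoting Proposition \ref{prop: tr basic} --- could in principle work, but the step you defer is precisely the notoriously delicate point of the residues--and--duality approach (compatibility of the residue symbol with $d$), it requires Gysin maps on all of the $\Omega^i_{X/Y}$ rather than just on $\omega_{X/Y}$, and nothing in the paper provides it. The paper's argument avoids it entirely: the claim is local, so one may assume $Y$ affinoid and $X$ \'etale over $\D^d_Y(0;1)$ via some $h$; the relative de\thinspace Rham complex of $X/Y$ is then the pullback of that of $\D^d_Y(0;1)/Y$ compatibly with the differentials, and compatibility of the trace with composition reduces the vanishing to the explicit statement for $\A^{d,\an}_Y$ already established in Proposition \ref{prop: tr basic}. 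You should either carry out the residue computation in full or switch to this reduction.
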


\begin{proof}  For ease of notation, we will drop $(0;1^-)$ from the notation for open polydiscs during the proof. Given Lemma \ref{lemma: basic change 2}, part (1) is proved verbatim as in the case of closed subspace of analytic affine space over a height one affinoid field treated in \cite[\S3]{vdP92}. For part (2), we can choose a commutative diagram
\begin{equation}
\label{eqn: triple triangle}
 \xymatrix{ X \ar[r]^-u \ar[dr]_-f & \D^{N}_Y \ar[r]^-v \ar[d]^-\pi & \D^{N+M}_Z \ar[d]^\pi \\ & Y \ar[r]^-v \ar[dr]_-g & \D^M_Z \ar[d]^-\rho \\ & & Z } 
\end{equation}
with all horiozntal arrows closed immersions, all vertical arrows the natural projections, and the upper right hand square Cartesian. Writing things out painfully explicitly, we need to show that the diagram below is commutative.
\[  \hspace*{-6em} \xymatrix@C=-4em{    \mathbf{R}(gf)_!\omega_{X/Z}[d+e] \ar@{=}[r]\ar@{=}[d] & \mathbf{R}g_!(\mathbf{R}f_!\omega_{X/Y} \otimes \omega_{Y/Z})[d+e] \ar@{=}[r] & \mathbf{R}g_!(\mathbf{R}\pi_!u_*\omega_{X/Y} \otimes \omega_{Y/Z})[d+e] \ar[d]^{\Tr_u} \\
 \mathbf{R}(\rho\pi)_!(vu)_*\omega_{X/Z}[d+e]  \ar[dd]_{\Tr_{vu}} \ar[dr]^{\Tr_u} & &  \mathbf{R}g_!(\mathbf{R}\pi_!\mathbf{R}\underline{\mathrm{Hom}}(u_*\mathcal{O}_X,\omega_{\D^N_Y/Y})\otimes \omega_{Y/Z})[N+e] \ar[dd]^{\mathrm{res}} \\
  & \mathbf{R}(\rho\pi)_!v_*(\mathbf{R}\underline{\mathrm{Hom}}(u_*\cO_X,\omega_{\D^N_Y/Y})\otimes \pi^* \omega_{Y/Z})\ar[dl]_{\Tr_v}[N+e] \ar@{=}[ur] &   \\ 
   \mathbf{R}(\rho\pi)_!\mathbf{R}\underline{\mathrm{Hom}}((vu)_*\mathcal{O}_X,\omega_{\D^{N+M}_Z/Z})[N+M] \ar[d]_{\mathrm{res}} & (\star) &  \mathbf{R}g_!(\mathbf{R}\pi_!\omega_{\D^N_Y/Y}\otimes \omega_{Y/Z})[N+e] \ar[d]^{\Tr_{\D^N_Y/Y}} \\ 
 \mathbf{R}(\rho\pi)_!\omega_{\D^{N+M}_Z/Z}[N+M] \ar[dd]_{\Tr_{\D^{N+M}_Z/Z}} \ar[dr]^{\Tr_{\D^{N+M}_Z/\D^M_Z}}  &  & \mathbf{R}g_!\omega_{Y/Z}[e] =\mathbf{R}\rho_!v_*\omega_{Y/Z}[e]\ar[d]^{\Tr_v} \\
 & \mathbf{R}\rho_!\omega_{\D^M_Z/Z}[M] \ar[dl]_{\Tr_{\D^M_Z/Z}} & \mathbf{R}\rho_!\mathbf{R}\underline{\mathrm{Hom}}(v_*\mathcal{O}_Y,\omega_{\D^M_Z/Z})[M] \ar[l]_-{\mathrm{res}}  \\ 
 \mathcal{O}_Z & &  }    \]
Commutativity of the two left hand triangles follows from Lemma \ref{lemma: closed dual} and Proposition \ref{prop: tr basic} respectively, and commutativity of the upper hexagon is straightforward. It therefore suffices to prove commutativity of the central octagon $(\star)$ which essentially expresses the fact that the maps $\Tr_v$ and $\Tr_{\D^N}$ arising from from the Cartesian upper right hand square of (\ref{eqn: triple triangle}) commute. 

To prove this commutativity, we may remove $\mathbf{R}\rho_!$ from every term appearing in this octagon, as well as tensor everything in sight by (an appropriate pullback of) $\omega_{\D^M_Z/Z}^{\otimes -1}$, and shift by $-M$. We will also simplify notation slightly, and replace $\D^M_Z$ by $Z$, thus the upper right hand square becomes
\[ \xymatrix{ \D^{N}_Y \ar[d]_\pi \ar[r]^v & \D^{N}_Z \ar[d]^\pi \\ Y \ar[r]^v & Z. } \] 
Finally, we can replace $u_*\mathcal{O}_X$ by $\cO_{\D^N_Y}$, which has the effect of removing the right hand map labelled `$\mathrm{res}$', in the diagram, thus turning the octagon into a heptagon. The diagram that we need to show commutes is therefore the one below, where $c$ is the codimension of $Y$ in $Z$.
\[ \xymatrix@C=0em{ & \mathbf{R}\pi_!v_*\left( \omega_{\D^N_Y/Y} \otimes \pi^*\mathfrak{n}_{Y/Z} \right)[N-c] \ar@{=}[dr]\ar[dl]_{{\Tr}_v} \\ \mathbf{R}\pi_!\mathbf{R}\Hom(v_*\mathcal{O}_{\D^{N}_Y},\omega_{\D^{N}_Z/Z})[N]\ar[d]_{\mathrm{res}} & &  \ar[d]^{\Tr_{\D^N_Y/Y}} v_*\mathbf{R}\pi_!\omega_{\D^N_Y/Y}\otimes \mathfrak{n}_{Y/Z} [N-c] \\ \mathbf{R}\pi_! \omega_{\D^{N}_Z/Z}[N]  \ar[dr]^{{\Tr}_{\D^N_Z/Z}} & & v_*\mathfrak{n}_{Y/Z}[-c] \ar[d]^{{\Tr}_v} \\ & \cO_Z  & \mathbf{R}\Hom(v_*\cO_Y,\cO_Z) \ar[l]_{\mathrm{res}} }  \]
Now, by localising on $Z$, we can assume that $Y$ is defined in $Z$ by a regular sequence $f_1,\ldots,f_c\in \Gamma(Z,\mathcal{O}_Z)$. Choosing co-ordinates $z_1,\ldots,z_N$ on $\D^N$, we may use $z_1,\ldots,z_N$ and $f_1,\ldots,f_c$ to trivialise the canonical and normal sheaves appearing in the above diagram. We then consider the Koszul resolution
\[ \mathcal{K}^\bullet_{Y/Z}:= \left[ \mathcal{O}_Z \longrightarrow \ldots \longrightarrow \mathcal{O}_Z^{\oplus \binom{c}{2}}\longrightarrow \mathcal{O}_Z^{\oplus c} \overset{(f_1,\ldots,f_c)}{\longrightarrow}\mathcal{O}_Z \right] \]
of $v_*\mathcal{O}_Y$, viewed as being concentrated in the interval $[-c,0]$. Thus the natural map $\cO_Z\rightarrow v_*\cO_Y$ corresponds to the inclusion
\[ \iota_0:\cO_Z \rightarrow \mathcal{K}^\bullet_{Y/Z} \]
of the term in degree $0$, and the trace morphism $\Tr_v$ corresponds to the canonical isomorphism of complexes
\[ \mathrm{can}\colon \mathcal{K}^\bullet_{Y/Z}\overset{\cong}{\longrightarrow}  \Hom( \mathcal{K}^\bullet_{Y/Z},\cO_Z)[c].   \]
Of course, $\pi^*\mathcal{K}^\bullet_{Y/Z}$ is a resolution of $v_*\mathcal{O}_{\D^N_Y}$ as a $\cO_{\D^N_Z}$-module, with similar descriptions of $\Tr_v$ and $\cO_{\D^N_Z}\rightarrow v_*\cO_{\D^N_Y}$. The diagram that we are required to show the commutativity of then becomes the following.
\[ \xymatrix@C=0em{ & \mathbf{R}\pi_!\pi^*\mathcal{K}_{Y/Z}^\bullet [N-c] \ar@{=}[dr]\ar[dl]_{\mathrm{can}} \\ \mathbf{R}\pi_!\pi^*\Hom(\mathcal{K}_{Y/Z}^\bullet,\cO_Z)[N]\ar[d]_{\iota_0^*} & &  \ar[d]^{\Tr_{z_1,\ldots,z_N}} \mathbf{R}\pi_!\cO_{\D^N_Y}\otimes \mathcal{K}_{Y/Z}^\bullet[N-c] \\ \mathbf{R}\pi_! \cO_{\D^{N}_Z}[N]  \ar[dr]^{{\Tr}_{z_1,\ldots,z_N}} & & \mathcal{K}_{Y/Z}[-c] \ar[d]^{\mathrm{can}} \\ & \cO_Z  & \Hom(\mathcal{K}_{Y/Z}^\bullet,\cO_Z) \ar[l]_{\iota_0^*} }  \]
The claim now following from the explicit description of $\mathbf{R}\pi_!\cO_{\D^N}$ and the construction of the trace map $\Tr_{z_1,\ldots,z_N}$ in \S\ref{sec: relative open unit disc} above.

For part (3), the question is local on $Y$, and on $X$ by Corollary \ref{cor: MV ss}. Hence we may assume that there exists a closed immersion $X\hookrightarrow \D^N_Y(0;1^-)$ over $Y$. We may also assume that $Y$ is Tate affinoid, with quasi-uniformiser  $\varpi\in\Gamma(Y,\cO_Y)$.

Since $X$ is smooth over $Y$, the module of differentials $\Omega^1_{X/Y}$ is locally free. Since $Y$ is affinoid, it follows from Proposition \ref{prop: A and B} that we may choose, for any $x\in X$, functions $t_1,\ldots,t_d\in \Gamma(X,\mathcal{O}_X)$ such that ${\rm d}t_1,\ldots,{\rm d}t_d$ form a basis of $\Omega^1_{X/Y,x} \otimes_{\mathcal{O}_{X,x}}k(x)$, and thus (by Nakayama's lemma) a basis of $\Omega^1_{X/Y,x}$. The locus of points $x'$ where ${\rm d}t_1,\ldots,{\rm d}t_d$ are not a basis of $\Omega^1_{X/Y,x'}$ is then a closed analytic subspace of $X$.

We claim that the complement of any such subspace locally admits a closed immersion into an open unit polydisc over $Y$. To see this, let us set $X_n:=X\cap \D^N_Y(0;\norm{\varpi}^{\frac{1}{n}})$ for each $n\geq 0$. Then, for any $f\in \Gamma(X_n,\cO_X)$, the Zariski open subset $D(f):=\{x\in X_n\mid f(x)\neq 0\}$ of $X_n$ admits a closed immersion into $\A^{2,\an}_{X_n}$ via
\[ \left(f,\frac{1}{f}\right):D(f)\rightarrow \A^{2,\an}_{X_n}.\]
Hence, for any $m\in \Z_{\leq 0}$, $D(f)\cap \D^N_Y(0;\norm{\varpi}^{\frac{1}{n}-})\cap \D^N_{X_n}(0;\norm{\varpi}^{m-})$ admits a closed immersion into a unit polydisc over $Y$, which proves the claim.

We may therefore reduce to the case where ${\rm d}t_1,\ldots,{\rm d}t_d$ are a basis of $\Omega^1_{X/Y}$ on the whole of $X$. It then follows from  \cite[Proposition 1.6.9 iii)]{Hub96} that the morphism $\bm{t}:=(t_1,\ldots,t_d): X\rightarrow \A^{d,\an}_{Y}$ is \'etale. Further localising on $X$, we may assume that the image of this morphism lands inside $\D^d_Y(0;1^-)$. But now, by applying part (2) to the composition 
\[ X \overset{\bm{t}}{\rightarrow} \D^d_Y(0;1^-)\overset{\pi}{\rightarrow} Y, \]
we deduce that the diagram
\[ \xymatrix{ \mathbf{R}f_!\omega_{X/Y}[d] \ar[rrr]^-{\mathbf{R}\pi_!(\mathrm{Tr}_{X/\D^d_Y(0;1^-)})}\ar[drrr]_{\Tr_{X/Y}} & & & \mathbf{R}\pi_!\omega_{\D^d_Y(0;1^-)/Y}[d] \ar[d]^{\Tr_{\D^d_Y(0;1^-)/Y}} \\ & & & \mathcal{O}_Y   } \]
commutes. Since the diagram
\[ \xymatrix{ \mathbf{R}f_!\Omega^{d-1}_{X/Y} \ar[rrr]^-{\mathbf{R}\pi_!(\mathrm{Tr}_{X/\D^d_Y(0;1^-)})} \ar[d]_{\mathbf{R}f_!({\rm d})} & & &  \mathbf{R}\pi_!\Omega^{d-1}_{\D^d_Y(0;1^-)/Y} \ar[d]^{\mathbf{R}\pi_!({\rm d})} \\ \mathbf{R}f_!\omega_{X/Y} \ar[rrr]^-{\mathbf{R}\pi_!(\mathrm{Tr}_{X/\D^d_Y(0;1^-)})} & & & \mathbf{R}\pi_!\omega_{\D^d_Y(0;1^-)/Y} } \]
also commutes, the claim therefore reduces to the case $X=\D^{d}_{Y}(0;1^-)$ that we have already handled.
\end{proof}

\begin{corollary} If $X=\D^d_Y(0;1^-)$ then the trace map
\[ \Tr_{z_1,\ldots,z_d}: \mathbf{R}^df_!\omega_{X/Y}\rightarrow \mathcal{O}_Y \]
defined above doesn't depend on the choice of co-ordinates $z_1,\ldots,z_d$.
\end{corollary}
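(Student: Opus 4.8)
The plan is to deduce the statement directly from Proposition \ref{prop: tr1}(1), once we recognise a choice of coordinates on $X = \D^d_Y(0;1)$ as nothing more than a particular (invertible) closed embedding into a unit polydisc over $Y$. We may assume that $Y$ is an adic space, the case of a general germ following by pullback from the ambient adic space via Corollary \ref{cor: prop push supp}.

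First I would note that a choice of coordinates $z_1,\ldots,z_d$ on $X = \D^d_Y(0;1)$ is precisely an isomorphism $u_z: X \isomto \D^d_Y(0;1)$ over $Y$, which we regard as a closed immersion of codimension $c = 0$. I then claim that the trace map $\Tr_{X/Y}$ produced by the construction preceding Proposition \ref{prop: tr1}, applied to the embedding $u_z$ with $N = d$, coincides with the map $\Tr_{z_1,\ldots,z_d}$ defined directly in \S\ref{sec: rup}. Indeed, when $u_z$ is an isomorphism the determinant of the normal bundle $\fr{n}_{X/\D^d_Y(0;1)}$ is canonically $\mathcal{O}_X$, the codimension $N - d$ vanishes, and the identification $\omega_{X/Y} \cong \fr{n}_{X/\D^d_Y(0;1)} \otimes_{\mathcal{O}_X} u_z^*\omega_{\D^d_Y(0;1)/Y}$ is the obvious one; consequently the isomorphism $\Tr_{u_z}$ supplied by Lemma \ref{lemma: closed dual} is simply the identification of $u_{z*}\omega_{X/Y}$ with $\omega_{\D^d_Y(0;1)/Y}$ induced by $u_z$, the subsequent restriction along the isomorphism $\mathcal{O}_{\D^d_Y(0;1)} \to u_{z*}\mathcal{O}_X$ is trivial, and the whole composite collapses to the polydisc trace $\Tr_{z_1,\ldots,z_d}$.

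Granting this, the corollary is immediate. Given two coordinate systems $z = (z_1,\ldots,z_d)$ and $z' = (z_1',\ldots,z_d')$, each yields a closed embedding $u_z$, $u_{z'}$ of $X$ into $\D^d_Y(0;1)$ over $Y$, and by the previous paragraph $\Tr_{z_1,\ldots,z_d}$ equals the trace $\Tr_{X/Y}$ computed from $u_z$, while $\Tr_{z_1',\ldots,z_d'}$ equals the trace computed from $u_{z'}$. Since Proposition \ref{prop: tr1}(1) asserts that $\Tr_{X/Y}$ is independent of the chosen embedding, these two maps agree.

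The only step that is not purely formal is the identification in the second paragraph, where one must unwind the construction of $\Tr_{X/Y}$ in the codimension-zero case and verify that $\Tr_{u_z}$ is the evident identification. This amounts to inspecting the isomorphism $\chi_f: f^!\mathcal{O}_Y \cong \fr{n}_{X/Y}[-c]$ from the proof of Lemma \ref{lemma: closed dual} when $f$ is an isomorphism, in which case it is the canonical one; I expect no difficulty here beyond careful bookkeeping, and everything else reduces to the independence-of-embedding result already established.
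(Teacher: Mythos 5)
Your proposal is correct and is essentially the paper's intended argument: the corollary is deduced from Proposition \ref{prop: tr1}(1) by viewing a change of coordinates as a change of closed embedding (of codimension zero) into the unit polydisc, after checking that the construction preceding that proposition, applied to an isomorphism $u_z$, collapses to $\Tr_{z_1,\ldots,z_d}$ because $\fr{n}$ is trivial and $\Tr_{u_z}$ is the evident identification. The reduction of the germ case to the adic space case via Corollary \ref{cor: prop push supp} matches the paper's convention as well.
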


\subsection{Smooth morphisms of adic spaces} \label{sec: tm as}

As a penultimate case, we construct the trace morphism when $Y$ is a finite dimensional adic space, and $f:X\rightarrow Y$ is a smooth morphism of relative dimension $d$, which is moreover partially proper in the sense of Kiehl. Then, locally on $Y$, there exists a cover of $X$ by opens $U_i$ admitting closed embeddings $U_i\hookrightarrow \D^{N_{i}}_Y(0;1^-)$ over $Y$. Moreover, each $U_i\cap U_j$ admits a closed embedding into $\D^{N_{i}+N_j}_Y(0;1^-)$ over $Y$. Using the spectral sequence from Corollary \ref{cor: MV ss}, together with Proposition \ref{prop: tr1}, the trace maps
\[ \Tr_{U_i/Y}: \mathbf{R}^{d}f_!\omega_{U_i/Y}\rightarrow \mathcal{O}_Y \]
factor uniquely through a map
\[ \Tr_{X/Y} : \mathbf{R}^{d}f_!\omega_{X/Y}\rightarrow \mathcal{O}_Y \]
which can be checked not to depend on the choice of the $U_i$. Since $\Tr_{X/Y}$ does not depend on the choice of open cover of $X$, it glues over an open cover of $Y$, and by Theorem \ref{theo: coh dim pp smooth} this can be viewed as a map
\[ \mathrm{Tr}_{X/Y}: \mathbf{R}f_!\omega_{X/Y}[d]\rightarrow \cO_Y \]
in ${\bf D}(\mathcal{O}_Y)$. Moreover, $\mathrm{Tr}_{X/Y}$ vanishes on the image of
\[ \mathbf{R}^df_!\Omega^{d-1}_{X/Y} \rightarrow \mathbf{R}^df_!\omega_{X/Y}, \]
since the same is true locally on $X$ and $Y$, and thus induces a map
\[ \mathrm{Tr}_{X/Y}: \mathbf{R}f_!\Omega^\bullet_{X/Y}[2d]\rightarrow \mathcal{O}_Y \]
in ${\bf D}(\mathcal{O}_Y)$.

\begin{remark} When $X=\A^{d,\an}_Y$ there are two candidates for a trace map: one constructed immediately above, and the other alluded to in Remark \ref{rem: A^d trace}. The two are easily seen to coincide.
\end{remark}

\subsection{The general case}

Finally, we consider again a smooth morphism $f:X\rightarrow Y$ of relative dimension $d$, partially proper in the sene of Kiehl, but now with the base $Y$ allowed to be any overconvergent, finite dimensional germ. Then arguing exactly as in the proof of Corollary \ref{cor: coh dim omega germs}, we see that, locally on $X$, we can extend $f$ to a diagram of pairs
\[ \xymatrix{ (X,\bm{X}) \ar[r]^-{u} \ar[dr]_f & (\D^{N}_Y(0;1^-),\D^{N}_{\bm{Y}}(0;1^-)) \ar[d]^{\pi} \\ & (Y,\bm{Y})   } \]
such that $f:\bm{X}\rightarrow \bm{Y}$ is smooth, $u:\bm{X}\rightarrow \D^{N}_{\bm{Y}}(0;1^-)$ is a closed immersion over $\bm{Y}$, and $X=\pi^{-1}(Y)\cap \bm{X}$. Using Corollary \ref{cor: coh dim omega germs}, together with Lemma \ref{lemma: base change overconvergent closed}, we can therefore carry through all the arguments of \S\ref{sec: trace closed} and \S\ref{sec: tm as} above to construct a trace morphism
\[ \Tr_{X/Y}:\mathbf{R}^df_!\omega_{X/Y}\rightarrow \mathcal{O}_Y, \]
which again can be viewed as a map
\[ \Tr_{X/Y}:\mathbf{R}f_!\omega_{X/Y}[d]\rightarrow \mathcal{O}_Y.\]

\begin{proposition} \label{prop: trace main} Let $Y$ be an overconvergent, finite dimensional germ, and $f:X\rightarrow Y$ a smooth morphism of relative dimension $d$, which is partially proper in the sense of Kiehl.
\begin{enumerate}
\item $\Tr_{X/Y}$ vanishes on the image of $\mathbf{R}^df_!\Omega^{d-1}_{X/Y}$, and descends to a map
\[ \Tr_{X/Y}: \mathbf{R}f_!\Omega^\bullet_{X/Y}[2d] \rightarrow \mathcal{O}_Y. \]
\item If $g:Y\rightarrow Z$ is smooth morphism of relative dimension $e$, partially proper in the sense of Kiehl, with $Z$ overconvergent, then the diagram
\[ \xymatrix{  \mathbf{R}(g\circ f)_!\Omega^\bullet_{X/Z}[2d+2e] \ar[rrr]^-{\mathbf{R}g_!(\mathrm{Tr}_{X/Y})} \ar[drrr]_-{\mathrm{Tr}_{X/Z}} & && \mathbf{R} g_!\Omega^\bullet_{Y/Z}[2e] \ar[d]^-{\mathrm{Tr}_{Y/Z}} \\ & && \mathcal{O}_Z }
 \]
commutes.
\end{enumerate}
\end{proposition}

\subsection{Duality morphism}

We can now construct the duality morphism. Let $f:X\rightarrow Y$ be a partially proper morphism of germs. If $\mathscr{I},\mathscr{J}$ are $\mathcal{O}_X$-modules then the natural map
\[ {\rm H}^0(X,\mathscr{I}) \times {\rm H}^0(X,\mathscr{J}) \rightarrow {\rm H}^0(X,\mathscr{I} \otimes \mathscr{J} ) \]
induces
\[ {\rm H}^0(X,\mathscr{I}) \times {\rm H}^0_c(X/Y,\mathscr{J}) \rightarrow {\rm H}^0_c(X/Y,\mathscr{I} \otimes \mathscr{J}), \]
and sheafifying this gives a pairing
\[ f_*\mathscr{I} \times f_!\mathscr{J} \rightarrow f_!(\mathscr{I}\otimes \mathscr{J}). \]
By taking resolutions, we deduce that if $\mathscr{E}$ and $\mathscr{F}$ are bounded complexes of $\mathcal{O}_X$-modules, and both $X$ and $Y$ are finite dimensional, then there is a natural pairing
\[ \mathbf{R}f_*\mathscr{E} \times \mathbf{R}f_!\mathscr{F} \rightarrow \mathbf{R}f_!\left(\mathscr{E}\otimes^{\mathbf{L}} \mathscr{F}\right) \]
in ${\bf D}^-(\mathcal{O}_Y)$. In particular, if $\mathscr{G}$ is a third bounded complex, then any pairing
\[ \mathscr{E}\times \mathscr{F} \rightarrow \mathscr{G} \]
induces a corresponding pairing
\[ \mathbf{R}f_*\mathscr{E} \times \mathbf{R}f_!\mathscr{F} \rightarrow \mathbf{R}f_!\mathscr{G} \]
in cohomology. If we now assume, moreover, that:
\begin{itemize}
\item $f$ is smooth of relative dimension $d$, and partially proper in the sense of Kiehl;
\item $Y$ is overconvergent;
\item $\mathscr{E}$ is a perfect complex on $X$,
\end{itemize}
then, setting $\mathcal{E}^\vee:=\mathbf{R}\Hom(\mathcal{E},\mathcal{O}_X)$, we have a natural evaluation pairing
\[ \mathscr{E} \times \mathscr{E}^\vee \otimes \omega_{X/Y} \rightarrow \omega_{X/Y}. \]
Together with the trace map $\Tr_{X/Y}$ this induces a pairing
\[ \mathbf{R}f_*\mathscr{E} \times \mathbf{R}f_!\left( \mathscr{E}^\vee \otimes \omega_{X/Y} \right) \rightarrow \mathcal{O}_Y[-d]. \]
There is of course a similar pairing
\[ \mathbf{R}f_!\mathscr{E} \times \mathbf{R}f_*\left( \mathscr{E}^\vee \otimes \omega_{X/Y} \right) \rightarrow \mathcal{O}_Y[-d]. \]

\section{A counter-example} \label{sec: counter examples}

In this section we give a counter-example showing that the formalism of $\mathbf{R}f_!$ cannot be extended beyond the partially proper case in any reasonable way. Our example also shows that the analogue of Lemma \ref{lemma: base change overconvergent closed} fails in general if $Z$ is replaced by a non-maximal point of $Y$. The example is based upon a suggestion of B. Le Stum.

\begin{theorem} There does not exist a way to define, for any morphism $f:X\rightarrow Y$ which is separated, locally of $^+$weakly finite type, and taut, a functor
\[ \mathbf{R}f_!:{\bf D}^+(X)\rightarrow {\bf D}^+(Y), \]
in such a way that:
\begin{enumerate}
\item $\mathbf{R}f_!$ agrees with Definition \ref{defn: Rf! pp} given above whenever $f$ is partially proper;
\item $\mathbf{R}f_!=f_!$ is the extension by zero functor whenever $f$ is an open immersion;
\item $\mathbf{R}(g\circ f)_!\cong \mathbf{R}g_!\circ \mathbf{R}f_!$ whenever $f$ and $g$ are composable morphisms.
\end{enumerate}
\end{theorem}

\begin{proof}
Let $\kappa=(k,k^\circ)$ be a height one affinoid field, $X=\D^1_\kappa(0;1)=\spa{k\tate{z},k^\circ\tate{z}}$ the (ordinary) closed unit disc over $\kappa$. Let $j_U:U \rightarrow X$ denote the inclusion of the open subspace defined by $\{x\in X \mid v_x(z-1)=1\}$, $f:X\rightarrow X$ the (finite, thus proper) morphism defined by $z\mapsto z^2$, and $V:=f^{-1}(U)\subset X$. Thus $j_V:V\rightarrow X$ is the intersection of $U$ with the open subspace defined by $\{x \in X \mid v_x(z+1)=1\}$. We therefore have a Cartesian square
\[ \xymatrix{  V\ar[r]^{j_V} \ar[d]_f & X \ar[d]^f \\ U \ar[r]^{j_U} & X. } \]
To prove the theorem, it suffices to show that $\mathbf{R}f_!\circ j_{V!}\ncong j_{U!}\circ  \mathbf{R}f_!$. 

To see this, we take $\mathscr{F}$ to be the constant sheaf $\underline{\Z}$ on $V$, and compute the stalks of both sides at the Type V apex point $\xi_1$ of the open disc $\{x\in X \mid v_{[x]}(z-1)<1\}$. Clearly we have that $\left(j_{U!}\mathbf{R}f_!\underline{\Z}\right)_{\xi_1}=0$, and we shall show that $\left(\mathbf{R}f_!j_{V!}\underline{\Z}\right)_{\xi_1}=\left(\mathbf{R}f_*j_{V!}\underline{\Z}\right)_{\xi_1}\neq 0$.

Indeed, we may base change to the set $G(\xi_1)$ of generalisations of $\xi_1$, which is a two point space $\{\xi_1,\xi\}$ consisting of $\xi_1$ together with the Gauss point $\xi$. The fibre $X_{(\xi_1)}=f^{-1}(G(\xi_1))$ is a three point space $\{\xi,\xi_1,\xi_{-1}\}$ consisting of $\xi,\xi_1$ and the apex point $\xi_{-1}$ of the open disc $\{x\in X \mid v_{[x]}(z+1)<1\}$, and the induced map $f:X_{(\xi_1)}\rightarrow G(\xi_1)$ sends $\xi_{\pm 1}$ to $\xi_1$ and $\xi$ to $\xi$. We then have
\[ \left(\mathbf{R}f_*j_{V!}\underline{\Z}\right)_{\xi_1} = \mathbf{R}\Gamma(X_{(\xi_1)},(j_{V!}\underline{\Z})|_{X_{(\xi_1)}}) \]
and we can identify the restriction of $j_{V!}\underline{\Z}$ to $X_{(\xi_1)}$ as the extension by zero of the constant sheaf $\underline{\Z}$ along the open immersion $\{ \xi \}\rightarrow X_{(\xi_1)}$. In particular, we have the exact sequence
\[ 0 \rightarrow \left(j_{V!}\underline{\Z}\right)|_{X_{(\xi_1)}} \rightarrow \underline{\Z}\rightarrow i_{1*}\underline{\Z} \oplus i_{-1*}\underline{\Z} \rightarrow 0 \]
where $i_{\pm1}$ is the inclusion of the closed point $\xi_{\pm1}$ inside $X_{(\xi_1)}$. Taking cohomology then gives the exact triangle
\[ \left(\mathbf{R}f_*j_{V!}\underline{\Z}\right)_{\xi_1}\rightarrow \Z \rightarrow \Z\oplus \Z \overset{+1}{\rightarrow} \]
where the second map is the diagonal morphism. Thus
\[ \left(\mathbf{R}f_*j_{V!}\underline{\Z}\right)_{\xi_1}\cong \Z[-1] \]
is non-zero as claimed.
\end{proof}

\bibliographystyle{../Templates/bibsty}
\bibliography{../Templates/lib.bib}

\end{document}